\numberwithin{equation}{section}
\renewcommand{\S}{\mathcal{S}}
\theoremstyle{plain}
\newtheorem{proposition}{Proposition}[section]
\newtheorem{theorem}{Theorem}[section]
\newtheorem{lemma}{Lemma}[section]
\newtheorem{corollary}{Corollary}[section]
\theoremstyle{definition}
\newtheorem{definition}{Definition}[section]
\theoremstyle{remark}
\newtheorem{rk}{Remark}[section]
\let\expandafter\oldproof\csname\string\proof\endcsname
\newcommand{\1}{\mathds{1}}
\newcommand{\mfo}{\mathfrak{o}}
\newcommand{\E}{\mathbb{E}}
\newcommand{\mP}{\mathbb{P}}
\renewcommand{\P}{\mathbb{P}}
\newcommand{\N}{\mathbb{N}}
\newcommand{\C}{\mathbb{C}}
\newcommand\norm[1]{\left\lVert#1\right\rVert}
\newcommand{\Em}{\overline{E}}
\newcommand{\be}{\begin{equation}}
\newcommand{\ee}{\end{equation}}
\newcommand{\by}{\begin{eqnarray*}}
\newcommand{\ey}{\end{eqnarray*}}
\renewcommand{\leq}{\leqslant}
\renewcommand{\geq}{\geqslant}
\definecolor{dark-red}{rgb}{0.4,0.15,0.15}
\definecolor{dark-blue}{rgb}{0.15,0.15,0.4}
\definecolor{medium-blue}{rgb}{0,0,0.5}
\renewcommand{\l}{\mathfrak{l}}
\renewcommand{\r}{\mathfrak{r}}
\renewcommand{\Mc}{\mathcal{MC}}
\newcommand{\Hqy}{\mathbf{H}_q^{y]}}
\newcommand{\Hqb}{\mathbf{H}_q^{b]}}
\newcommand{\Sf}{\mathcal{M}}
\newcommand{\Bd}{\mathcal{B}}
\begin{document}
\title{Skip-free Markov chains}
\author{Michael C.H. Choi}
\address{Institute for Data and Decision Analytics, The Chinese University of Hong Kong, Shenzhen, Guangdong, 518172, P.R. China.}
\email{michaelchoi@cuhk.edu.cn}

\author{Pierre Patie}\thanks{The authors would like to thank an anonymous referee for his constructive comments and suggestions that improve substantially the quality of the paper.  The authors are also grateful to Jim Dai, Daniel Jerison, Laurent Miclo, Jim Pitman and Laurent Saloff-Coste for stimulating discussions on several aspects of this work. This work was partially supported by  NSF Grant DMS-1406599 and ARC IAPAS, a fund of the Communaut\'ee francaise de Belgique. The second author is grateful  for the hospitality of  the  LMA at the UPPA, where part of this work was completed.}

\address{School of Operations Research and Information Engineering, Cornell University, Ithaca, NY 14853.}
\email{pp396@cornell.edu}
\thanks{{\emph{Keywords: }Markov chains, Potential theory, Martin boundary, Fluctuation theory, Spectral theory, Non-self-adjoint operator, rate of convergence, cutoff.}}
\maketitle

\begin{abstract}
The aim of this paper is to develop a general theory for the class of skip-free Markov chains on denumerable state space. This encompasses their potential theory via an explicit characterization of their potential kernel expressed in terms of family of fundamental excessive functions, which are defined by means of the theory of Martin boundary.  We also describe their fluctuation theory generalizing the celebrated fluctuations identities that were obtained  by using the Wiener-Hopf factorization for the specific  skip-free random walks. 
We proceed by resorting to the concept of similarity to  identify the class of  skip-free Markov chains whose transition operator has only real and simple eigenvalues. We manage to find a set of sufficient and easy-to-check conditions on the one-step transition probability for a Markov chain to belong to this class. We also study several properties of this class including their spectral expansions given in terms of Riesz basis, derive a necessary and sufficient condition for this class to exhibit a separation cutoff, and give a tighter bound on its convergence rate to stationarity than existing results.
\end{abstract}


\section{Introduction}\label{sec:intro}
Let $X=(X_n)_{n\in \mathbb{N}}$ be a Markov chain on the countable state space $E = [[\l,\r]] \subseteq \mathbb{Z}$, where we use the notation $[[$ (resp.~$]]$) to denote that $\l$ (resp.~$\r$) may or may not be in $E$, defined on the filtered probability space $ (\Omega,(\mathcal{F}_n)_{n\in \mathbb{N}},\P=(\P)_{x\in \E})$.  We denote its transition matrix by $P = (p(x,y))_{x,y \in E}$. We assume further that $X$ is irreducible, i.e.~for all $x,y \in \dot{E}= ]\l,\r[$, $p^{(n)}(x,y)=\P_x(X_n=y)>0$ for some $n\in \N$, and $\textit{upward skip-free}$, i.e.~for all $x \in E$, $p(x,x+1)>0$ and $p(x,x+y)=0, y\geq 2$.
We denote by $\Sf$ be the set of such upward skip-free Markov chains (or transition operators) on $E$.

The aim of this paper to develop a comprehensive theory for the set $\Sf$, including {\emph{the potential theory, the fluctuation theory}} and, resorting to the algebraic  concept of similarity,  {\emph{the spectral theory}}. As a by-product, we also provide, for ergodic chains,  a detailed  analysis of the speed of convergence to stationarity and investigate the separation cutoff phenomena.

We recall that under the additional condition that $p(x,x-y)=0, y\geq 2$, that is, it is also skip-free to the left, $X \in \Sf$ becomes a birth-death chain. These chains have been and are still the object of intensive and fascinating studies. This probably originates from the seminal  work of Karlin and McGregor \cite{Karlin-McGregor_Spectral}, see also Lederman and Reuter \cite{LR54} and Anderson \cite{Anderson91}, on the diagonalization  of their transition operator  that provide deep insights into fine distributional properties of these chains.  Note that the spectral analysis of these operators has also revealed fascinating links with the theory of orthogonal polynomials and Stieljes moment problem. A review of birth-death chains including their potential theory,  is given below in Section \ref{subs:bd}.

For skip-free Markov chains however, the literature is much more limited than their birth-death counterparts even though skip-free chains appear naturally and frequently in theoretical and applied investigation. On the theoretical side, the (upward)  hitting time distribution of (upward) skip-free chains has
been studied by \cite{AW89, Fill}. Another notable research is the work of Adikahri \cite{Adikahri86} on excursion theory of such chains. In addition, Mao et al.~\cite{MZZ16} have recently investigated  the separation cutoff for skip-free chains. We also mention that Miclo and Patie \cite{Miclo-Patie18} initiated a new approach to study these chains based on intertwining relationships with some diffusions with jumps.  On the other hand, from an application viewpoint, skip-free chains have been widely used in population models, queueing theory and branching processes, see for example \cite{BGR82,ASM03,APW18} and the references therein.

Our contributions to the theory of the class $\Sf$ can be summarized as follows.
\begin{enumerate}[a)]
\setlength\itemsep{1em}

\item \textit{Potential theory}: We shall start our program by studying the potential theory of chains in $\Sf$. More specifically, we implement an original approach based on the theory of Martin boundary for Markov chains as developed by Dynkin \cite{dynkin} to express their $q$-potential kernel. In this vein, we recall that in the specific case of birth-death chains, this $q$-potential is given in terms of the two fundamental solutions (the decreasing and increasing one) of  a three-term recurrence equations (discrete analogue of a second order differential equation), see  Section \ref{subs:bd} for more details regarding this expression. In our context, the situation is more delicate as one has to solve  an infinite recurrence equation  whose set of solutions does  not seem to have been  clearly identified in the literature.  Although the issue of solving this equation is of algebraic nature, we shall elaborate a strategy based mostly on a combination of techniques from probability theory and  potential theory. More specifically, instead of trying to identify directly the  convex cone of $q$-excessive functions for the chain $X \in  \Sf$, we take an alternative route which consists in  characterizing   the $q$-potential kernel in terms of the so-called  fundamental $q$-excessive (for short F$q$E) functions  of three  different chains: $(X,\mP)$, $(X,\widehat{\mP})$ the dual chain as defined in \eqref{eq:dualdef} below and $(X,\mP^{y]}), y \in E$, where $(X,\mathbb{P}^{y]})$ is the Markov chain $(X,\mP)$  killed upon entering the  half-line $[[\l,y]$, which is plainly an upward skip-free Markov chain on the state space $E^{y]} = (y,\r]]$.

\item \textit{Fluctuation identities}: From this representation of the $q$-potential kernel, we derive the main fluctuations identities for chains in $\Sf$. We recall that the fluctuation theory, which is concerned  with the distribution of the first visit of the chain  to some (finite or infinite) intervals is of great importance in many applications such as biology, epidemiology and also in  ruin theory where  the upward skip-free property is required.  This theory is well established for the case of  birth-death chains  (skip-free on both sides), see e.g.~Karlin and McGregor \cite{Karlin-McGregor_Spectral}, Keilson \cite{Ke71}. On the other hand the famous Wiener-Hopf factorization technique has proven to be useful to characterize the law of the first exit times for the class of random walks, that is for Markov chains with stationary and independent increments, see Spitzer \cite{Spitzer} for further details on these identities. We also mention the interesting work of Fill \cite{Fill} where he characterizes the upward hitting time distribution of upward skip-free chain via establishing an intertwining with pure-birth chain. Our original approach goes decisively beyond these frameworks as it allows to treat in an unified way, not only the first upward passage time, but also the first downward passage time, including the possibility of undershoot, of the irreducible skip-free Markov chain. To implement our methodology on some specific examples, one merely needs to have access to a {\color{red}{transform}} (Laplace transform, Fourier transform, moment generating function\ldots) that determines the one-step transition kernel of the chain. We shall illustrate this idea by recovering, in a simple manner, Spitzer's identities for skip-free random walks, and, by studying the first passage times of branching Galton-Watson processes with immigration, whose details will be provided in a subsequent paper \cite{Patie-Wang}, see also \cite{APW18}. We also mention that the continuous analogue of these results  for skip-free continuous-time Markov processes on the real line has  been detailed in \cite{Patie-Vigon} and applications to generalized Ornstein-Uhlenbeck processes and continuous branching processes with immigration are carried out in \cite{Lefevre2013} and \cite{Patie-Wev} respectively.

\item \textit{Spectral theory and its applications}: We also aim at providing some insights into the spectral theory of the (transition operator of) Markov chains that  belong to the class $\Sf$. This is a  challenging issue as  the transition operator $P$ of such a chain is non-self-adjoint (non-reversible)  in the weighted Hilbert space $\ell^2(\pi),$  where $\pi$ is the reference measure, implying that there is no spectral theorem available for such bounded linear operator. To overcome this difficulty, we propose an original approach   based on the concept of similarity.  More specifically, we introduce a subclass of $\Sf$, denoted by $\S$,  whose each element is  related to a (diagonalizable) transition matrix of a birth-death Markov chain via a certain commutation relation. Using this identity, we resort to some techniques from non-harmonic analysis to investigate how to transfer the known spectral information of the reversible birth-death transition operator to the non-reversible one in order to obtain its spectral decomposition. By means of the inverse spectral theorem we manage to show that the class $\S$ characterizes completely the family of transition operators in $\Sf$ that have real and distinct eigenvalues. On the other hand, in Theorem \ref{thm:mcins}\eqref{it:Mc}, we shall provide  a set of sufficient and easy-to-check conditions on the one-step transition probability for a Markov chain to belong to $\S$.
It is worth mentioning that, to the best of our knowledge, this is the first identification of a class of non-reversible chains with such a spectrum. This is a useful fact which answers an open question raised by Fill \cite{Fill} on understanding the class of skip-free chains that have real and non-negative eigenvalues.

We believe that this new way of classifying Markov chains based on similarity orbit is powerful enough to tackle many substantial and delicate problems arising in the analysis of such chains. To illustrate this fact, we provide  the spectral decomposition of their non-symmetric transition matrices, the law of their first passage times, including the case with possible overshoot. We also study for ergodic chains in $\S$  the speed of convergence to equilibrium in both the $\ell^2(\pi)$-topology and the total variation distance. In this line of work, we indicate that Miclo \cite{Miclo15} has recently introduced a new notion known as Markov similarity and compares the mixing speed of Markov similar generators. As for the speed of convergence to equilibrium, there is a vast literature devoted to this important topic in various settings. For instance, in the case when $P$ is a linear self-adjoint operator in $\ell^2(\pi)$ (reversible), the chain satisfies the spectral gap, which according to  Roberts and Rosenthal \cite{RR}, is equivalent to $P$ being geometrically ergodic. In the non-reversible case, to overcome the lack of a spectral theory, many interesting reversibilization techniques have been implemented to obtain a quantitative rate of convergence and we refer the interested readers to Fill \cite{Fill91} and Saloff-Coste \cite{LSC97}. We propose an alternative approach  based on the concept of similarity which seems to be a natural extension of the spectral gap estimate developed in the reversible case. Moreover, our technique enables us to provide an explicit and a spectral interpretation as a perturbed spectral gap estimate of the (discrete) hypocoercivity phenomena that was introduced by Villani \cite{V09} for general non-self-adjoint semigroups.  We also manage to obtain generalization to the class $\mathcal{S}^M$, a subclass of $\mathcal{S}$ with stochastic monotone time-reversal to be formally introduced in Section \ref{sec:st}, of the remarkable \emph{spectral gap times mixing time going to infinity} separation cutoff criteria established by Diaconis and Saloff-Coste \cite{DSC06} for reversible birth-death chains, and recently by Mao et al. \cite{MZZ16} for continuous-time skip-free chain with stochastic monotone time-reversal.
\end{enumerate}

We point out that these three topics (potential, fluctuation and spectral theory)  are intertwined. 
 Indeed, there is  of course a  fundamental connection between the resolvent operator and the spectral theory of $P$ as the 
spectrum of $P$ is, by definition, the set of complex numbers $z$ such that the resolvent operator  $R_z=(zI - P)^{-1}$ does not exist or is unbounded. Note that since $P$ is a contraction operator, this set is included in the unit disc. Moreover, the development of the fluctuation identities is based on the expression of the $q$-potential kernels whereas the spectral theory allow us to get an explicit representation of the transition kernel (and hence by integration of the $q$-resolvent operator) and, also, of  the distribution of the  first passage times.

The rest of the paper is organized as follows. In Section \ref{sec:prelim}, we fix our notations and provide a  review of birth-death chains and Martin boundary theory. 
The definition and study of the fundamental $q$-excessive functions as well as the expression of the $q$-potential kernel are discussed in Section \ref{sec:pot_th}. In Section \ref{sec:fluc}, we  present the fluctuations identities, that is the explicit characterization (via the probability generating function) of the law of the first exit times of skip-free chains, and, following the line of work by Feller \cite{F66}, Kent and Longford \cite{KL83} and Viskov \cite{Viskov00}, a characterization of these stopping times as discrete infinitely divisible variables in Section \ref{sec:fpt}. We proceed in Sections \ref{sec:st}-\ref{sec:sc} by introducing the subclass of skip-free Markov chains that are similar to a diagonalizable birth-death chain and discuss their spectral properties as well as its applications to the speed of convergence to equilibrium and the study of the cutoff phenomena.

\section{Preliminaries}\label{sec:prelim}
In this Section, we review some classical concepts on Markov chains that will play a central role throughout the paper. This include some  facts of the potential theory and the Martin boundary theory of Markov chains.

\subsection{Basic facts on Markov chains}
We recall that $X$ is said to be \textit{upward skip-free} if the only upward transition is of unit size, yet it can have downward jump of any arbitrary magnitude, i.e.~for all $x\in E$ and $y \geq x+2$, $\P_x(X_{1}=y)=0$. We consider an upward skip-free irreducible Markov chain $X = (X_n)_{n \in \mathbb{N}_0}$ on a denumerable state space $E\subseteq \mathbb{Z}$ with left endpoint $\l$ and right endpoint $\r$. We use the convention that $\r \in E$ if the boundary point $\r$ is not absorbing. Otherwise, if $\r$ is absorbing or $\r = \infty$, we say that $X \in \Sf_{\infty}$. We assume through Section 2.1 that $X \in \Sf_{\infty}$  and postpone to Section \ref{sec:regular} the study of the Markov chain $X \notin \Sf_{\infty}$. 

Since $X$ is irreducible,  there exists $\pi$ a positive and finite-valued excessive measure for $P$, that is, $\pi P \leq \pi$, see \cite[Section $5.2$ and $6.8$]{KSK}, and that will serve as a reference measure.
Let $G_q$ be the $q$-potential kernel   of $X$ (or its Green function) with respect to the reference measure $\pi$. That is, for $0 < q < 1$,
\begin{eqnarray}\label{eq:defpk}
G_q(x,y) &=& \sum_{n = 0}^\infty q^n \frac{\mP_x(X_n = y)}{\pi(y)} = \sum_{n = 0}^\infty q^n \frac{p^{(n)}(x,y)}{\pi(y)}, \: x,y \in E.\end{eqnarray}
When $q = 1$, we write $G$, rather than $G_1$, to denote the 1-potential kernel. The $\pi$-dual matrix $\widehat{P} = (\widehat{p}(y,x))_{y,x \in E}$ is defined to be
	\begin{eqnarray}\label{eq:dualdef}
	\widehat{p}(y,x)\pi(y) &=&  p(x,y)\pi(x).\label{def:dual}
	\end{eqnarray}
Denote $\zeta$ to be the lifetime of $X$, that is, $\zeta(\omega) = k$ if $k$ is the last time that $X$ is in the state space $E$, and $\zeta(\omega) = \infty$ otherwise.

Let us now define the hitting times associated with $X$. Denote $T_A$ to be the first hitting time of the set A, that is, \[T_A = \inf\{n \geq 0; X_n \in A\},\] with the usual convention that $\inf \{\emptyset\} = \infty$. If $A = \{a\}$, we write $T_a = T_A$. Similarly, if $A = (\l,b]$, we use $T_{b]} = T_A$. Denote the first return time to be $T_a^+ = \inf\{n \geq 1; X_n = a\}$.

We also use, for   a real-valued function $f$ and a measure $\mu$ on $E$, the following notation, for any $x,y \in E$, 
\begin{align*}
	P f (x) &=  \sum_{y \in E} p(x,y) f(y),\\
 \mu P(y) &= \sum_{x \in E} \mu(x) p(x,y).
\end{align*}


\subsection{A review of  birth-death  chains} \label{subs:bd}
Let $Y$ be a birth-death chain on $E$ with  transition operator $Q \in \Bd$, that is,  $Q \in \Sf$ with the additional requirements that $Q(x,x-1)>0$ and $Q(x,x-y)=0$ for $y\geq 2$. $Q$ is a bounded self-adjoint  operator in the Hilbert space $\ell^2(\pi_Q)$ where $\pi_Q$ is the reversible measure, i.e.~for all $x,y\in E$, $Q(x,y)\pi_Q(x)=Q(y,x)\pi_Q(y)$.   The $q$-potential of $Q$, denoted by  $U_q$, is well-known to take the form, for any $x,y \in E$,
\begin{equation}\label{eq:gbd}
  U_q(x,y)= C_q F_q(x \wedge y) \widehat{F}_q(x \vee y),
\end{equation}
where $F_q$ (resp. $\widehat{F}_q$) is the unique increasing (resp.~decreasing) solution to the equation $q Q F = F$ satisfying  appropriate boundary conditions, and $C_q$ is the inverse of their Wronskrian. Note that this equation boils down to a three-term recurrence equations, see  \cite[Ex.~5.3 p.150 and Section 5.4]{DM76} for details regarding this expression. This is reminiscent of the expression of the potential kernel of the  continuous time-space  analogue, whose generator is a second order differential operator. We recall that a systematic and thorough study of one-dimensional diffusion has been undertaken by Feller \cite{Fe54}.
Moreover, the moment generating function of the first hitting time $T^Y_a=\inf\{ n\geq 0;\: Y_n=a\}$  of $Y$   to a fixed level $a \in E$ is given by
\begin{equation}\label{eq:gbd}
  \mathbb{E}_x[q^{T^Y_a}]= \frac{F_q(x)}{F_q(a)}\1_{\{x\leq a\}} +\frac{\widehat{F}_q(x)}{\widehat{F}_q(a)}\1_{\{x> a\}}.
\end{equation}
Let us now describe a link between this expression and the diagonalization of the  operator $Q$ in $\Bd$.  We assume, for sake of simplicity, that $\mathfrak{l}=0$ and thus $E$ is countable subset of $\mathbb{N}$. It is worth mentioning that the similarity transform $D_{\sqrt{\pi}} Q D_{\frac{1}{\sqrt{\pi}}}$, where $D_a$ is the diagonal matrix of $a$ yields to a symmetric tridiagonal matrix which belongs to  the well-studied class of Jacobi matrices. We also recall that  a function $\phi$ is  a Pick function  if $\phi$ admits an analytical continuation to the cut complex plane $\mathbb{C}\setminus [0,\infty)$ such that $\Im \phi(z)\Im(z)\geq 0$, and we denote by  $\mathcal{P}$ the subset of such Pick functions  that admit the representation, for $\Im(z)>0$,
 \begin{equation} \label{eq:dp}
  \phi(z) = \int_{-1}^{1}\frac{d\Delta(r)}{r-z},
 \end{equation}
 where $\Delta$ is a probability measure on $[-1,1]$.   From the work of Karlin and McGregor \cite{Karlin-McGregor_Spectral}, we know that there exists a spectral mapping $\mathrm{K}: \Bd \rightarrow \mathcal{P}$ which is one-to-one and $\Delta$   is the spectral measure of $Q$. More specifically, we have for any $f \in \ell^2(\pi_Q)$ and $n\in \mathbb{N}$, $Q^n$ can be diagonalizable as follows
   \begin{equation}\label{eq:sbd}
     Q^n f= \int_{-1}^{1} r^n \langle f , \mathrm{F}_r \rangle_{\pi_Q} \mathrm{F} _r \: d\Delta(r),
   \end{equation}
   where for  $r \in supp(\Delta)$,  $Q\mathrm{F}_r=r\mathrm{F}_r$. Note that  $S(Q)$ the spectrum of $Q$ is such that $S(Q)=supp(\Delta)\subset [-||Q||,||Q||] \subseteq [-1,1]$.
    Another  remarkable and deep result, which was conjectured by Krein, is the onto property of the spectral map $\mathrm{K}$, see \cite[Appendix 3]{Gorbachuk1997} for an historical account. Indeed for any $\phi \in \mathcal{P}$, i.e.~a  Pick function of the form  \eqref{eq:dp}, there exists a unique $Q \in \Bd$  on $E$ with  $\mathfrak{l}$ a non-killing boundary   with a spectral representation of the form \eqref{eq:sbd}. The first proof
of this inverse spectral result was given in the monograph by  H.~Dym
and H.P.~McKean \cite[Chapter 6]{DM76}. It uses the theory of Hilbert spaces of entire
functions, and a deep uniqueness theorem due to de Branges. Note also that such a spectral representation reveals that when the spectrum of $Q$ is composed of isolated eigenvalues then they are necessarily simple (see e.g.~\cite[Section $5.8$]{DM76}).
 \subsection{Martin boundary theory of denumerable Markov chains}

In this subsection, we review some essential results of Martin boundary theory of denumerable irreducible Markov chains based on \cite{dynkin,KSK,Woess}. First, we recall the definition of $q$-excessive,  $q$-harmonic and $q$-potential functions. A non-negative function $f$ on $E$ is $q$-excessive (resp.~$q$-harmonic) if $q P f (x) \leq  f(x)\,(\mathrm{resp.}~q P f (x) = f(x))$ for all $x \in E$, where $0 < q \leq 1$. A non-negative function $f$ on $E$ is a $q$-potential if $f$ is $q$-excessive and $\lim_{n \to \infty} (qP)^n f(x) = 0$ for all $x \in E$. We say that $f$ is harmonic (resp.~excessive) if $f$ is $1$-harmonic (resp.~$1$-excessive).  We write $$\mathcal{E}_q = \{f:E\to\mathbb{R}^+;~qPf \leq f\} \, \mathrm{(resp.}~\mathcal{H}_q, \mathcal{P}_q)$$ for the set of $q$-excessive (resp.~$q$-harmonic, $q$-potential) functions on $E$. We simply write $\mathcal{E}$ (resp.~$\mathcal{H}$, $\mathcal{P}$) to denote the set of excessive (resp.~harmonic, potential) functions.  Note that the irreducibility property implies that $f \in \mathcal{E}_q$ is positive, since if there exists $y \in E$ such that $f(y) > 0$, then by irreducibility there exists $n$ such that for $x \in E$ $f(x) \geq q p^n(x,y) f(y) > 0$. We will also use the notation $\widehat{\mathcal{E}}_q$ (resp.~$\widehat{\mathcal{H}}_q, \widehat{\mathcal{P}}_q$) to denote the set of $q$-excessive (resp.~$q$-harmonic, $q$-potential) functions associated with $\widehat{P}$, which was defined in \eqref{eq:dualdef}. We point out  that if $\hat{h} \in \widehat{\mathcal{E}}_q$, then $\hat{h} \pi$ is a $q$-excessive measure for $P$ in the sense that $q \: \hat{h} \pi P f\leq \hat{h} \pi  f$.
%
	Another commonly used terminology for excessive (resp.~ harmonic, potential) function is superharmonic (resp.~ invariant, purely-excessive) function.

We further recall the definition of minimal $q$-harmonic function. A non-zero function $f$ on $E$ is minimal $q$-excessive if $f = f_1 + f_2$ implies $f = c_i f_i$ for $i = 1,2$, where $f_1, f_2 \in \mathcal{E}_q$, $c_1$ and $c_2$ are constants and $0 < q \leq 1$. We say that $f$ is minimal excessive if $f$ is minimal $1$-excessive. We write $\mathcal{E}_q^{min}$ (resp.~$\mathcal{H}_q^{min}$) to be the set of minimal $q$-excessive (resp.~minimal $q$-harmonic) functions on $E$. For further discussion on minimal excessive and minimal harmonic functions, we refer the interested readers to \cite[Chapter $7$]{Woess} and \cite[Section $14$]{dynkin}. Next, we state the classical Riesz representation theorem (see e.g. \cite[Theorem $6$]{dynkin}), which gives an unique decomposition of excessive function.
\begin{theorem}[Riesz representation theorem]\label{thm:riesz}
	For $0 < q \leq 1$, every $q$-excessive function can be written uniquely as the sum of a $q$-potential and a $q$-harmonic function. That is, if $f \in \mathcal{E}_q$, then
	$$f(x) = \sum_{y \in E} G_q(x,y) k_q(y) \pi(y) + h_q(x),$$
	where $k_q(x) = f(x) - qPf(x)$ and $h_q(x) = \lim_{n \to \infty} (qP)^n f(x) \in \mathcal{H}_q$.
\end{theorem}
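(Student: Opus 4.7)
The plan is to split $f$ via a telescoping argument and to identify the harmonic part as the pointwise limit of the iterates $(qP)^n f$. Set $k_q := f - qPf$, which is a non-negative function since $f$ is $q$-excessive. A straightforward induction on $n$, based on the identity $(qP)^m f - (qP)^{m+1} f = (qP)^m k_q$, yields
$$f(x) = \sum_{m=0}^{n} (qP)^m k_q(x) + (qP)^{n+1} f(x), \qquad x \in E, \; n \geq 0.$$
Because $qP$ is positive and preserves the excessive inequality under iteration, the sequence $\{(qP)^n f(x)\}_{n \geq 0}$ is non-increasing and bounded below by $0$, so it converges pointwise to $h_q(x) = \lim_{n} (qP)^n f(x) \in [0, f(x)]$. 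Passing to the limit in the displayed identity then writes $f$ as the sum of $h_q$ and the series $\sum_{m \geq 0} (qP)^m k_q$, whose pointwise finiteness is immediate from the uniform bound $\sum_{m=0}^{n} (qP)^m k_q \leq f$.

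The next step is to check that each summand is of the required type. For the harmonic part, dominated convergence applied to the transition kernel (with $f$ itself as the dominating function, since $(qP)^n f \leq f$ for all $n$) gives
$$qPh_q(x) = \lim_{n \to \infty} qP\bigl((qP)^n f\bigr)(x) = \lim_{n \to \infty} (qP)^{n+1} f(x) = h_q(x),$$
so $h_q \in \mathcal{H}_q$. For the potential part, unfolding the $m$-step kernel and applying Tonelli (permitted because $k_q \geq 0$), one computes
$$\sum_{m=0}^{\infty}(qP)^m k_q(x) = \sum_{y \in E} \left(\sum_{m=0}^{\infty} q^m p^{(m)}(x,y)\right) k_q(y) = \sum_{y \in E} G_q(x,y) k_q(y) \pi(y),$$
where the last step invokes \eqref{eq:defpk}. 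This function is $q$-excessive as a pointwise sum of iterates of the non-negative function $k_q$; moreover $(qP)^n$ applied to it equals the tail $\sum_{m \geq n} (qP)^m k_q$, which vanishes pointwise as $n \to \infty$, so it belongs to $\mathcal{P}_q$.

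Finally, for uniqueness, suppose $f = g_1 + h_1 = g_2 + h_2$ with $g_i \in \mathcal{P}_q$ and $h_i \in \mathcal{H}_q$. Applying $(qP)^n$ to both equalities, using $(qP)^n h_i = h_i$ and the defining property $\lim_{n}(qP)^n g_i = 0$ of potentials, and letting $n \to \infty$ yields $h_1 = h_2$, from which $g_1 = g_2$ follows. The main technical subtlety throughout is to justify the exchange of $qP$ with pointwise limits and with the infinite sums defining the potential part; each such interchange is legitimate because every quantity in sight is non-negative, so a monotone, Tonelli, or dominated-convergence argument applies directly, with $f$ serving as the overall dominating function.
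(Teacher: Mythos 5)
Your proof is correct and complete, and it follows the standard argument found in Dynkin's paper, which is exactly the reference the authors cite for this theorem without reproving it themselves. The telescoping decomposition
$f = \sum_{m=0}^{n}(qP)^m k_q + (qP)^{n+1}f$, the monotone convergence of $(qP)^n f \downarrow h_q \geq 0$, and the Tonelli rearrangement identifying the potential part with $\sum_y G_q(x,y)k_q(y)\pi(y)$ are all carried out properly, and the interchanges of $qP$ with limits and sums are justified by non-negativity and the domination $(qP)^n f \leq f$ (the implicit standing assumption being that $f$ is finite-valued, which guarantees $Pf(x) \leq f(x)/q < \infty$ so that dominated convergence applies). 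The uniqueness argument, applying $(qP)^n$ to any decomposition and sending $n \to \infty$, is the standard one. No gap.
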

Recall that $X \in \mathcal{M}_{\infty}$  with transition matrix $P$ and reference measure $\pi$ on a denumerable state space $E$. For a measure $\mu$ on $E$, define $E_{\mu} = \{y \in E;~ \mu G_q(y) > 0\}$, where $\mu G_q(y) = \sum_{x \in E} \mu(x) G_q(x,y)$, for a $0 < q <1$. We say that the measure $\mu$ is  a \textit{standard measure} if $E_{\mu} = E$. For $x,y \in E$ and $0 < q <1$ (with also $q=1$ when $(X,\mP)$ is a transient), the $q$-Martin kernel associated to a standard measure $\mu$ is defined to be
\begin{align} \label{eq:defmk}
{}_\mu K_{q}(x,y) &= \dfrac{G_q(x,y)}{\mu G_q(y)},
\end{align}
where the denominator is positive since $\mu$ is standard. Note that ${}_\mu K_{q}(x,y)$ is finite for any $x,y \in E$. Next, define a metric ${}_\mu d_{q}$ on the space $E$ by
\begin{align*}
{}_\mu d_{q}(y,z) = \sum_{x \in E} w_x \left(|{}_\mu K_{q}(x,y) - {}_\mu K_{q}(x,z)| + |\1_{\{x = y\}} - \1_{\{x = z\}}| \right),
\end{align*}
where the weights $(w_x)_{x \in E}$, with $w_x > 0$, are chosen such that $\sum_{x \in E} w_x < \infty$. 
We can obtain the completion of $E$ with respect to the metric ${}_\mu d_{q}$, namely $\Em$, and the boundary of $E$ in $\Em$ is denoted as $\partial E = \Em - E$. $\Em$ is the Martin compactification of $E$ and $\partial E$ is the Martin boundary. The set $$\partial_{\P} E = \{y \in \partial E; \: {}_{\mu}K_q(x,y) \, \text{is minimal } q \text{-harmonic in}\, x\}$$ is known as the \textit{minimal Martin boundary}. When there is no ambiguity in the probability measure, we write $\partial_m E = \partial_{\P} E$. The inclusion of the indicator terms $\1_{\{x = y\}}$ and $\1_{\{x = z\}}$ in the metric ${}_\mu d_{q}$ ensures that $E$ is an open set in the Martin compactification $\Em$, and the Martin boundary $\partial E$ is closed. Next, fix a reference point $\mathfrak{o} < \r - 1$, and we write the $q$-Martin kernel $_{\delta_{\mathfrak{o}}}K_q(x,y)$ as
\begin{align*}
	_{\delta_{\mathfrak{o}}} K_q(x,y) &= \dfrac{\mu G_q (y)}{G_q(\mathfrak{o},y)} {}_{\mu} K_{q}(x,y) = \dfrac{_{\mu} K_{q}(x,y)}{_{\mu} K_q(\mathfrak{o},y)}.
\end{align*}
Similarly, we have
\begin{align*}
	_{\mu} K_q(x,y) &= \dfrac{G_q(\mathfrak{o},y)}{\mu G_q (y)} {}_{\delta_{\mathfrak{o}}} K_q(x,y) = \dfrac{{}_{\delta_{\mathfrak{o}}} K_q(x,y)}{{}_\mu K_q(y)}.
\end{align*}
A sequence $(y_n)$ is a Cauchy sequence in the metric space $(E,{}_\mu d_{q})$ if and only if $(_{\mu} K_{q}(x,y_n))$ is a Cauchy sequence of real numbers for every $x$ if and only if $(_{\delta_{\mathfrak{o}}} K_q(x,y_n))$ is a Cauchy sequence of real numbers for every $x$ if and only if $(y_n)$ is a Cauchy sequence in the metric space $(E,{}_{\delta_{\mathfrak{o}}}d_q)$. Thus, up to homeomorphism, $\Em$ is independent of the choice of  the initial measure and of the reference point $\mathfrak{o}$. It can also be shown that $\Em$ is independent of the choice of the weights $(w_x)_{x \in E}$ (see e.g. Proposition 10.13 in \cite{KSK}). From now on, we fix the reference point $\mathfrak{o}$ and write for all $x,y \in E$,
\begin{align}\label{def:Kqxy}
K_q(x,y) &= {}_{\delta_{\mfo}} K_q(x,y) = \dfrac{G_q(x,y)}{G_q(\mfo,y)}.
\end{align}
Let $\Omega_{\infty} = \{\omega; \:  \text{there is}\, x_\infty \in \partial E \, \text{such that} \, x_n \rightarrow x_\infty \, \text{in the Martin topology} \}.$
$\Omega_{\infty}$ can be interpreted as the set of \textit{non-terminating} trajectories of $X$ that converges to $\partial E$.
If $(X,P)$ is transient and $P$ is a stochastic matrix then there is a random variable $X_{\infty}$ taking values in $\partial E$ such that for each $x \in E$,
$\mP_x \left(\lim_{n \rightarrow \infty} X_n = X_{\infty} \right) = 1.$
In terms of trajectory space, we have
$\mP_x \left(\Omega_{\infty}\right) = 1.$
If $P$ is strictly substochastic at some states, we should extend the trajectories to $E \cup \{\nabla\}$, where $\nabla$ is the graveyard state. Define
$$\Omega_{\nabla} = \{\omega \in \Omega;\: \, \text{there is} \, k \geq 1 \, \text{such that} \, x_n \in E, \, \forall n \leq k \, \text{and} \, x_n = \nabla, \forall n \geq k + 1  \}.$$
$\Omega_{\nabla}$ is the set of trajectories that eventually reach $\nabla$. Denote $\zeta$ to be the lifetime of $X$, that is, $\zeta(\omega) = \mathfrak{K}$ for $\omega \in \Omega_{\nabla}$, where $\mathfrak{K}$ is the last time that $X$ is in the state space $E$ (as defined in $\Omega_{\nabla}$), and $\zeta(\omega) = \infty$ otherwise. Define
$\Omega_{\zeta} = \Omega_{\nabla} \cup \Omega_{\infty}$.
If $(X,\mP)$ is transient then there is a random variable $X_{\zeta}$ taking values in $\Em$ such that for each $x \in E$,
$\mP_x \left(\lim_{n \rightarrow \zeta} X_n = X_{\zeta} \right) = 1.$
In terms of trajectory space, this means that
$\mP_x \left(\Omega_{\zeta}\right) = 1.$

Next, suppose now that $h \in \mathcal{E}_q$. The Doob transform or $h$-process of $X$ is defined to be the Markov chain on $E^h = \{y \in E;~ h(y) > 0 \}=  E$, by irreducibility, with the canonical measure $\mP^h_x$ such that
$\mP^h_x(X_1 = y) = \dfrac{p(x,y)h(y)}{h(x)}.$
Recalling that $\mfo$ is the fixed reference point, the $q$-potential and $q$-Martin kernels associated with the $h$-process take respectively the form
\begin{align}
	G^h_q(x,y) &= \dfrac{G_q(x,y)h(y)}{h(x)}, \\
	K^h_q(x,y) &= \dfrac{K_q(x,y)h(\mfo)}{h(x)}. \label{eq:Kh}
\end{align}
From \eqref{eq:Kh} and the definition of the metric $_{\mu} d_{q}$, we observe that the Martin compactification $\Em$ is homeomorphic to the Martin compactification of the $h$-process.
 Next, we state the following which are the main claims of Theorem $6$ and $7$ in \cite{dynkin}.
\begin{theorem}[Uniqueness of the representation]\label{thm:uniq} \label{thm:htransp}
Let $q \in (0,1]$. If $h \in \mathcal{E}_q$ such that $h(\mfo)=1$ then $h$ has a unique representation of the form
$$h(x) = \int_{E \cup \partial_m E} K_q(x,y) \, \mu_h(dy) = K_q\mu_h(x),$$
where $\mu_h(\cdot) =  \mP_1^h(X_{\zeta} \in \cdot)$ defines  a probability measure.
Conversely, for any finite measure $\mu$, the mapping $x \mapsto \int_{E \cup \partial_m E} K_q(x,y) \, d \mu(y)$ defines an $q$-excessive function, which is $q$-harmonic if and only if $\mu(E) = 0$.
Finally, for all $y \in E \cup \partial_m E$, let $h^y(\cdot) = K(\cdot,y)$. Then $y \in E \cup \partial_m E$ if and only if $\mP^{h^y}(X_{\zeta} = y) = 1$. Moreover we have $\mP^{h^y}(\zeta = \infty) = 1$ if and only if $y \in \partial_m E$.
\end{theorem}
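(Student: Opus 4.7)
The plan is to establish the representation in three stages: a Riesz decomposition of $h$ into potential and $q$-harmonic parts, a martingale-convergence argument under the Doob transform that produces the representing measure as the distribution of $X_\zeta$ in the Martin compactification, and an extremality argument that concentrates this measure on the minimal boundary $\partial_m E$.

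Applying the Riesz representation theorem above, first write
\[
  h(x) = \sum_{y \in E} G_q(x,y)\, k_q(y)\pi(y) + \tilde h(x),
\]
with $k_q := (I - qP)h \ge 0$ and $\tilde h := \lim_n (qP)^n h \in \mathcal{H}_q$. Substituting $G_q(x,y) = K_q(x,y)\, G_q(\mfo, y)$ from \eqref{def:Kqxy}, the first term equals $\int_E K_q(x,y)\, d\mu^{(1)}(y)$ with $\mu^{(1)}(\{y\}) := G_q(\mfo,y)\, k_q(y)\pi(y)$, a finite measure on $E$ of total mass $h(\mfo) - \tilde h(\mfo)$. It remains to represent the $q$-harmonic part $\tilde h$ as an integral of $K_q(x, \cdot)$ over $\partial_m E$.

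To this end, pass to the Doob transform $\mathbb{P}^{\tilde h}$, which is a genuine Markov law because $\tilde h \in \mathcal{H}_q$. By the classical Martin-boundary convergence theorem---proved via Doob's martingale-convergence theorem applied to the non-negative $\mathbb{P}^{\tilde h}$-supermartingale $n \mapsto K_q(X_n, y_0)/\tilde h(X_n)$ for each fixed $y_0$ in a countable separating family, combined with compactness of $\Em$ and the very definition of the Martin topology through convergence of the kernels $K_q(\cdot, y)$---the chain $X_n$ converges $\mathbb{P}_\mfo^{\tilde h}$-almost surely to a random variable $X_\zeta \in \Em$. Bounded convergence then yields
\[
  \tilde h(x) = \int_{\Em} K_q(x, y)\, d\mu^{(2)}(y), \qquad \mu^{(2)}(\cdot) := \tilde h(\mfo) \cdot \mathbb{P}_\mfo^{\tilde h}(X_\zeta \in \cdot).
\]
A Choquet-type extremality argument---relying on the fact that minimal $q$-harmonic functions are exactly the $K_q(\cdot, y)$ with $y \in \partial_m E$, while each $K_q(\cdot, y)$ with $y \in \partial E \setminus \partial_m E$ is a nontrivial superposition of these---then forces $\mu^{(2)}$ to concentrate on $\partial_m E$. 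Setting $\mu_h := \mu^{(1)} + \mu^{(2)}$ delivers a probability measure (since $h(\mfo)=1$) for which the claimed formula holds.

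Uniqueness follows because any other representing measure would, upon restriction to $E$ and to $\partial_m E$, produce a Riesz decomposition of $h$; uniqueness of the Riesz decomposition together with the linear independence of the minimal $K_q(\cdot, y)$ then forces it to coincide with $\mu_h$. The converse is direct: each $K_q(\cdot, y)$ is $q$-excessive, so any integral $K_q\mu$ is too, while a direct computation gives $(I - qP)K_q(\cdot, y)(x) = [\pi(y) G_q(\mfo, y)]^{-1}\Id{x=y}$ for $y \in E$ and $(I-qP)K_q(\cdot,y) \equiv 0$ for $y \in \partial_m E$, making $K_q\mu$ $q$-harmonic iff $\mu(E) = 0$. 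Finally, for $y \in E \cup \partial_m E$ and $h^y := K_q(\cdot, y)$, uniqueness forces $\mu_{h^y} = \delta_y$ and hence $\mathbb{P}^{h^y}(X_\zeta = y) = 1$; the equivalence $\mathbb{P}^{h^y}(\zeta = \infty) = 1 \iff y \in \partial_m E$ then follows from the dichotomy that the $h^y$-process survives forever precisely when $K_q(\cdot, y)$ is $q$-harmonic ($y \in \partial_m E$), and is absorbed at $y$ in finite time when $K_q(\cdot, y)$ is a proper $q$-potential ($y \in E$). The main technical obstacle is to secure the almost-sure Martin-topology convergence of the $h$-transformed chain and the extremality step pinning $\mu^{(2)}$ to the minimal boundary.
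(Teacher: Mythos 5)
This statement is not proved in the paper: the sentence immediately preceding it says ``we state the following which are the main claims of Theorem $6$ and $7$ in \cite{dynkin},'' so the paper simply quotes Dynkin's Martin-boundary representation theorem without argument. Your outline (Riesz decomposition into potential plus $q$-harmonic part; Doob transform and almost-sure Martin-topology convergence to get a representing measure on $\Em$; Choquet-type extremality to push it onto $\partial_m E$) is indeed the classical blueprint that Dynkin and the standard references follow, and the bookkeeping of $\mu^{(1)}$ via $G_q(x,y)=K_q(x,y)\,G_q(\mfo,y)$ is correct.

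There is, however, a genuine gap in your martingale step, and it is not merely a matter of polish. You take the $\mathbb{P}^{\tilde h}$-supermartingale $n\mapsto K_q(X_n,y_0)/\tilde h(X_n)$; this is indeed a non-negative supermartingale because $K_q(\cdot,y_0)$ is $q$-excessive, so it converges almost surely. But this controls the Martin kernel in its \emph{first} argument along the path, whereas the Martin topology on $\Em$ is defined by convergence of $y\mapsto K_q(x,y)$ in the \emph{second} argument (see the definition of ${}_\mu d_q$ in the paper). Almost-sure convergence of $K_q(X_n,y_0)/\tilde h(X_n)$ gives no information about convergence of $K_q(x,X_n)$, so the inference ``hence $X_n$ converges in $\Em$'' does not follow from the supermartingale you constructed. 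The correct object to analyze is $K_q(x_0,X_n)$ for fixed $x_0$, and showing \emph{that} converges $\mathbb{P}^{\tilde h}_\mfo$-a.s.\ is precisely the delicate part of the classical proof: one needs either Hunt's time-reversal/Radon--Nikodym argument identifying $K_q(x_0,\cdot)$ with a likelihood ratio for the reversed chain, or Dynkin's direct construction of the exit law via approximating $h$ by potentials $K_q\mu_n$ supported on $E$ and showing the measures $\mu_n$ converge weakly on $\Em$. As it stands, the convergence claim is simply asserted with the wrong supporting martingale. Relatedly, the uniqueness step is too quick: ``linear independence of the minimal $K_q(\cdot,y)$'' does not yield uniqueness of an \emph{integral} representation over an uncountable boundary. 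The standard route identifies the boundary part of any representing measure as the intrinsic exit law $\mathbb{P}^{\tilde h}_\mfo(X_\zeta\in\cdot)$ (so it is determined by $h$), or invokes the Choquet-simplex property of the cone $\mathcal{E}_q$; either of those arguments needs to be made, not just alluded to.
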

The previous claim means that $X$ is forced to terminate at the point $y \in E \cup \partial_m E$ $\,$ $\mP_x^{h^y}$-almost surely.

\begin{theorem}\label{thm:minE}
We have $\mathcal{E}_q^{min}=\{h_q; \: h_q(x) = C K_q(x,y), C>0 \textrm{ and } y \in E \cup \partial_m E\}$.
\end{theorem}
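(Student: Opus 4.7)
The plan is to prove the two inclusions separately, in both cases leveraging the uniqueness part of Theorem \ref{thm:uniq} as the main engine. The key observation is that $K_q(\mfo,y)=1$ for every $y\in E$, and by continuity of the Martin kernel this extends to the boundary, so the representing probability measure in Theorem \ref{thm:uniq} behaves well under normalization at the reference point $\mfo$.

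For the inclusion $\supseteq$, I would fix $y\in E \cup \partial_m E$ and show that $K_q(\cdot,y)$ is minimal $q$-excessive. Suppose $K_q(\cdot,y)=f_1+f_2$ with $f_1,f_2\in\mathcal{E}_q$, and set $c_i=f_i(\mfo)\ge 0$, so that $c_1+c_2=K_q(\mfo,y)=1$. Assuming both $c_i>0$ (otherwise the statement is trivial), normalize $g_i=f_i/c_i$ so that $g_i(\mfo)=1$, and apply Theorem \ref{thm:uniq} to obtain probability measures $\mu_i$ on $E\cup\partial_m E$ with $g_i(x)=\int K_q(x,\cdot)\,d\mu_i$. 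Substituting back gives
\[
K_q(x,y)=\int_{E\cup\partial_m E} K_q(x,z)\,(c_1\mu_1+c_2\mu_2)(dz).
\]
The uniqueness part of Theorem \ref{thm:uniq}, applied to $K_q(\cdot,y)$ whose representing measure is $\delta_y$, forces $c_1\mu_1+c_2\mu_2=\delta_y$; since $c_i>0$ and $\mu_i$ are probability measures, each $\mu_i$ must equal $\delta_y$, yielding $f_i=c_iK_q(\cdot,y)$, which is precisely minimality.

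For the inclusion $\subseteq$, I would start from a minimal $q$-excessive function $h$, which is positive everywhere by irreducibility, and normalize it so that $h(\mfo)=1$. Theorem \ref{thm:uniq} furnishes a unique probability measure $\mu_h$ on $E\cup\partial_m E$ with $h=K_q\mu_h$. I would argue by contradiction: if $\mu_h$ is not a Dirac mass, there exists a Borel set $A\subseteq E\cup\partial_m E$ with $0<\mu_h(A)<1$, allowing the splitting $\mu_h=\mu_h|_A+\mu_h|_{A^c}$ and, correspondingly, $h=f_1+f_2$ with $f_i=K_q(\mu_h|_{\cdot})\in\mathcal{E}_q$ (the fact that integrating $K_q$ against a finite positive measure gives a $q$-excessive function is the converse half of Theorem \ref{thm:uniq}). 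Neither $f_i$ can be a scalar multiple of $h$, because a relation $f_i=\alpha h$ combined with uniqueness would force $\mu_h|_A$ or $\mu_h|_{A^c}$ to be proportional to $\mu_h$, contradicting $0<\mu_h(A)<1$. This contradicts minimality of $h$, so $\mu_h=\delta_y$ for some $y\in E\cup\partial_m E$, and $h(x)=K_q(x,y)$. Dropping the normalization then absorbs the constant $C>0$.

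The only delicate point I foresee is verifying that the splitting $f_i=K_q(\mu_h|_{\cdot})$ indeed produces two nonzero $q$-excessive functions whose sum is $h$; this hinges on a Fubini/monotone convergence argument applied to the Martin kernel against the finite restrictions of $\mu_h$, but is essentially routine given the converse statement of Theorem \ref{thm:uniq}. All other steps are linear-algebraic applications of the uniqueness of the Martin representation.
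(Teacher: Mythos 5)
The paper states this theorem as a recalled fact from the Martin boundary theory of Dynkin (it sits in the review Section 2.3 immediately after Theorem \ref{thm:uniq}, which is explicitly attributed to Dynkin's Theorems 6 and 7) and supplies no proof of its own, so there is no internal argument to compare against. Your proof is correct and is essentially the classical one: both inclusions are extracted from the existence-and-uniqueness part of Theorem \ref{thm:uniq} after normalizing at the reference point $\mfo$, using $K_q(\mfo,\cdot)\equiv 1$. One feature worth highlighting is that your $\supseteq$ direction establishes minimality of $K_q(\cdot,y)$ inside the full cone $\mathcal{E}_q$ uniformly in $y\in E\cup\partial_m E$; this is stronger than what the definition of $\partial_m E$ gives for boundary points (minimality only among $q$-harmonic functions), and your route via representation uniqueness avoids having to invoke the Riesz decomposition to upgrade harmonic-minimality to excessive-minimality. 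Two small points that should be made explicit: if some $c_i=f_i(\mfo)=0$, then by irreducibility (as recorded just above Theorem \ref{thm:riesz} in the paper) the function $f_i$ vanishes identically, so the decomposition is indeed trivial as you claim; and in the $\subseteq$ direction the existence of a Borel set $A$ with $0<\mu_h(A)<1$ follows because $\mu_h$ is a Borel probability measure on the metrizable space $E\cup\partial_m E$ and is assumed not to be a Dirac mass. With these noted, the argument is complete and agrees with the standard treatment in Dynkin and Woess.
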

Finally, we recall the following useful result whose proof follows readily from \cite[Theorem $11.9$]{CW05}.
\begin{lemma}\label{lem:CW}
	Suppose that $h_q \in \mathcal{E}_q$, and $T$ is a stopping time with respect to $(\mathcal{F}_n)_{n \geq 1}$, the sigma field generated by $(X_n)$. For $x \in E^{h_q}$,
		$$\mP_x^{h_q} (T < \infty) = \dfrac{1}{h_q(x)}\E_x \left[ q^T h_q(X_T) \1_{\{T < \infty\}}\right].$$
\end{lemma}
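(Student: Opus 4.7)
The plan is to reduce the claim to a single change-of-measure identity at a deterministic time: for every $n\in\mathbb{N}$ and every $A\in\mathcal{F}_n$,
\begin{equation*}
 \mP_x^{h_q}(A)=\frac{1}{h_q(x)}\,\E_x\!\left[q^n h_q(X_n)\,\1_A\right].
\end{equation*}
Once this is in place, applying it with $A=\{T=n\}$, which lies in $\mathcal{F}_n$ because $T$ is a stopping time, then summing over $n\in\mathbb{N}$ and invoking monotone convergence to interchange the sum and the expectation, yields
\begin{equation*}
 \mP_x^{h_q}(T<\infty)=\sum_{n\ge 0}\mP_x^{h_q}(T=n)=\frac{1}{h_q(x)}\,\E_x\!\left[q^T h_q(X_T)\,\1_{\{T<\infty\}}\right],
\end{equation*}
which is the desired identity.

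The key intermediate step is therefore the Radon--Nikodym identity at time $n$. I would prove it by induction on $n$, starting from the one-step definition of the $h_q$-process given in the excerpt and systematically tracking the $q$-factor that the convention of $q$-excessivity forces to appear. Concretely, iterating the Markov property one step at a time and telescoping the ratios $h_q(y_i)/h_q(y_{i-1})$ produces the finite-dimensional distribution
\begin{equation*}
 \mP_x^{h_q}(X_1=y_1,\dots,X_n=y_n)=q^n\,\frac{h_q(y_n)}{h_q(x)}\,\prod_{i=1}^n p(y_{i-1},y_i),
\end{equation*}
with $y_0=x$. Summing over cylinder sets generating $\mathcal{F}_n$ then gives the Radon--Nikodym formula above.

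The main subtlety, and really the only one, lies in correctly accounting for the factor $q^n$. Since $h_q$ is only $q$-excessive rather than harmonic, the one-step kernel $p(x,y)h_q(y)/h_q(x)$ is sub-Markovian only after multiplication by $q$, and the $h_q$-process must be interpreted as sub-Markovian with lifetime associated to this multiplicative functional; this is precisely the convention that produces the $q^T$ in the statement. Once this convention is fixed, both the induction and the passage from a deterministic time $n$ to the stopping time $T$ are entirely routine, and the argument mirrors that of Theorem~11.9 in \cite{CW05}, which is cited in the excerpt.
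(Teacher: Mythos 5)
Your proof is correct. The paper gives no argument of its own for this lemma; it simply cites Theorem~11.9 of \cite{CW05}. The steps you supply---(i) the change-of-measure identity $\mP_x^{h_q}(A)=h_q(x)^{-1}\E_x[q^n h_q(X_n)\1_A]$ for $A\in\mathcal{F}_n$, obtained by telescoping the one-step Doob kernels along cylinder sets, (ii) applying it with $A=\{T=n\}\in\mathcal{F}_n$, and (iii) summing over $n$ and interchanging sum and expectation by monotone convergence (everything is nonnegative)---constitute precisely the standard argument that the cited theorem encapsulates, so yours is the same route, just spelled out. Your observation about the factor $q$ is the one genuine subtlety and you handle it correctly: as written in the paper, the kernel $p(x,y)h_q(y)/h_q(x)$ is not sub-Markovian for a merely $q$-excessive $h_q$, so the $h_q$-process must be built from the $q$-discounted kernel $q\,p(x,y)h_q(y)/h_q(x)$; that is both what makes the transform well defined and what produces the $q^T$ on the right-hand side. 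The absence of $q$ in the paper's displayed definition of the $h$-process is thus a small omission, and your reading of it is the intended one, as the statement of the lemma and its subsequent use (e.g., in the proof relating $K_q\delta_y$ to $\E_x(q^{T_y})$) confirm.
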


\section{Potential Theory} \label{sec:pot_th}

In this Section, we  provide an explicit representation of the $q$-potential kernel $G_q$, as defined in \eqref{eq:defpk},  of an upward skip-free Markov chain $X \in  \Sf$. We recall that in the simpler case when $X$ is a birth-death Markov chain, i.e.~skip-free in both directions, then its $q$-potential kernel takes the form
\begin{equation} \label{eq:defpbd}
  U_q(x,y)= C_q F_q(x \wedge y) \widehat{F}_q(x \vee y),
\end{equation}
where $F_q$ and $\widehat{F}_q$ are the two fundamental solutions of  a three-term recurrence equations (discrete analogue of a second order differential equation), see  Section \ref{subs:bd} for more details regarding this expression. In our context, the situation is more delicate as one has to solve  an infinite recurrence equation  whose set of solutions does  not seem to have been  clearly identified in the literature.  Although the issue of solving this equation is of algebraic nature, we shall elaborate a strategy based mostly on a combination of techniques from probability theory and  potential theory. We start by  
expressing the $q$-potential kernel in terms of the so-called  fundamental $q$-excessive (for short F$q$E) functions  of the following three processes: $(X,\mP)$, $(X,\widehat{\mP})$ and $(X,\mP^{y]})$, where $(X,\mathbb{P}^{y]})$ is the Markov chain $(X,\mP)$  killed upon entering the  half-line $[[\l,y]$, which is plainly an upward skip-free Markov chain on the state space $E^{y]} = (y,\r]]$,  with transition kernel denoted by  $P^{y]}$. We are now ready to state the main result of this Section.


\begin{theorem}\label{thm:green2}
	Suppose that $X \in \mathcal{M}_{\infty}$.
\begin{enumerate}
\item \label{it:mb}
 \label{it:ph} Writing, for any $x\in E$ and $0<q<1$, \begin{equation} H_q(x) =  K_q\delta_\r(x), \end{equation}
  (resp.~$\widehat{H}_q(x) =
    \widehat{K}_q\delta_\l(x)$) with $\delta_{\r}$ is the Dirac mass at $\r$ and $K_q$ defined in \eqref{def:Kqxy}, we have, for all $0 < q < 1$,  that \[H_q \in \mathcal{E}_q^{min}\] (resp.~$\widehat{H}_q \in \widehat{\mathcal{E}}_q^{min}$)  and it is the unique minimal increasing (resp.~decreasing) $q$-excessive for $P$ (resp.~$\widehat{P}$)  such that  $H_q(\mfo) = 1$ (resp.~$\widehat{H}_q(\mfo)=1$). Moreover,  if $X \in \Sf_{\infty}$ (resp.~$X \in \Sf \setminus \Sf_{\infty}$) then $H_q $ is the unique increasing function in $\mathcal{H}_q$ (resp.~$H_q \in \mathcal{P}_q$  with $H_q(\r)<\infty$).
   \item  \label{it:phk}
    For any $y<\r$, $0<\kappa_q^{y]}=\lim\limits_{x\to \r} \frac{K_q\delta_\r(x)}{K^{y]}_q\delta_\r(x)} <\infty $. Then the function  $\Hqy$ defined, for any $x \in E^{y]}$, by
    \begin{equation} \Hqy(x) =  \kappa_q^{y]} K^{y]}_q\delta_\r(x) \end{equation}  has  (with respect to $P^{y]}$) the same property as $H_q$ but with the normalization    \[\lim\limits_{x\to \r} \frac{\Hqy(x)}{H_q(x)}=1\]
    (recall that by convention $\Hqy(x) =0$ for any $x \leq y$).
		\item \label{it:poh}Finally, 
set  $C_q = G_q(\mfo,\mfo)\,>0$. Then, we have, for all $x,y \in E,$
		\begin{equation}\label{eq:bargxysum}
		G_q(x,y)=
		C_q \: \widehat{H}_q(y)\left(H_q(x) -  \Hqy(x)\right).
		\end{equation}
	\end{enumerate}
\end{theorem}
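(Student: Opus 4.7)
The plan is to exploit the upward skip-free structure to identify $H_q$ explicitly, use duality to get $\widehat H_q$, and then combine these with the Riesz decomposition of Theorem 2.2 to handle the off-diagonal formula.

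For part (i), I would first apply strong Markov at $T_y$ to obtain the universal identity $G_q(x,y) = \E_x[q^{T_y}] G_q(y,y)$. The upward skip-free property then yields the multiplicative representation $\E_x[q^{T_y}] = \prod_{k=x}^{y-1} \psi_q(k)$ when $x \le y$, with $\psi_q(k) = \E_k[q^{T_{k+1}}] \in (0,1)$. Plugging these into $K_q(x,y) = G_q(x,y)/G_q(\mfo,y)$, almost all factors cancel, and $K_q(x,y)$ becomes independent of $y$ as soon as $y \ge \max(x,\mfo)$, equal to the explicit strictly increasing function
\[
H_q(x) = \E_x[q^{T_\mfo}]\1_{\{x \le \mfo\}} + (\E_\mfo[q^{T_x}])^{-1}\1_{\{x > \mfo\}}.
\]
This identifies a single point of the Martin compactification ``at $\r$'', so $H_q \in \mathcal{E}_q^{min}$ by Theorem 2.3. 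Uniqueness of the minimal increasing element of $\mathcal{E}_q^{min}$ is then forced because any competitor equals $K_q(\cdot,y_0)$ for some $y_0 \in E \cup \partial_m E$, and monotonicity under upward skip-free singles out $y_0 = \r$. The harmonic/potential dichotomy is read off by applying Theorem 2.2 to the $H_q$-transform, under which the chain terminates at $\r$ almost surely; $q$-harmonicity of $H_q$ then corresponds exactly to infinite lifetime, i.e.~to $X \in \Sf_\infty$. The statement for $\widehat H_q$ follows by running the identical argument on the dual $\widehat X$, which is downward skip-free.

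Part (ii) is then a direct application of part (i) to the killed chain $(X,\mP^{y]})$ on $E^{y]}$, which is itself upward skip-free and whose upward one-step transition probabilities coincide with those of $X$. Consequently both $H_q$ and $K_q^{y]}\delta_\r$ are governed by the same $\psi_q(k)$ for large $k$, so their ratio stabilizes as $x \to \r$; this yields the strictly positive, finite limit $\kappa_q^{y]}$ together with the desired normalization $\Hqy(x)/H_q(x) \to 1$.

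For part (iii), I would split into the two ranges $x \le y$ and $x > y$. When $x \le y$, strong Markov gives $G_q(x,y) = H_q(x) G_q(y,y)/H_q(y)$, while duality combined with the downward skip-free structure of $\widehat X$ gives $G_q(x,y) = \widehat H_q(y) G_q(x,x)/\widehat H_q(x)$. Equating these two expressions on $\{x \le y\}$ yields the diagonal identity $G_q(x,x) = C_q H_q(x)\widehat H_q(x)$ with $C_q = G_q(\mfo,\mfo)$, which immediately delivers the claimed formula in this range since $\Hqy(x)=0$. When $x > y$, substituting this diagonal identity into $G_q(x,y) = \E_x[q^{T_y}] G_q(y,y)$ reduces the target to
\begin{equation*}
H_q(x) - \Hqy(x) = H_q(y)\, \E_x[q^{T_y}], \qquad x > y.
\end{equation*}
I would establish this by applying the Riesz decomposition (Theorem 2.2) to $H_q$ restricted to $E^{y]}$, viewed as a $q$-excessive function for $P^{y]}$. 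Since $H_q$ is $q$-harmonic for $P$, the charge $(I - qP^{y]})H_q$ is supported on one-step transitions out of $E^{y]}$; a direct expansion identifies the potential part as $\E_x[q^{T_{y]}} H_q(X_{T_{y]}})]$, which equals $H_q(y) \E_x[q^{T_y}]$ upon applying strong Markov at $T_{y]}$ together with the closed form $\E_z[q^{T_y}] = H_q(z)/H_q(y)$ valid for $z \le y$ from part (i). The harmonic remainder is then identified as $\Hqy$ by uniqueness in Theorem 2.2.

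The step I expect to be the main obstacle is precisely this last identification: verifying that the harmonic component of the Riesz decomposition is \emph{exactly} $\Hqy$ and not merely a positive multiple of it. This amounts to matching the asymptotic behavior as $x \to \r$ on both sides, which is precisely what the normalization $\kappa_q^{y]}$ from part (ii) is engineered to provide.
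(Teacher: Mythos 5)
Your part (i) matches the paper's route: reduce the Martin kernel to hitting-time ratios via strong Markov, observe $K_q(x,y)$ stabilizes once $y \geq x \vee \mfo$ by the upward skip-free cancellation, and then invoke Theorems~\ref{thm:riesz}--\ref{thm:minE} for minimality, uniqueness, and the harmonic/potential dichotomy. Your part (iii), on the other hand, takes a \emph{genuinely different} route from the paper: you decompose $H_q$ restricted to $E^{y]}$ via the Riesz representation and identify the potential piece as $\E_x[q^{T_{y]}}H_q(X_{T_{y]}})] = H_q(y)\E_x[q^{T_y}]$, whereas the paper first proves the two-sided exit formula $\E_x(q^{T_b}\1_{\{T_b<T_a\}}) = \tfrac{H_q(x)}{H_q(b)} - \tfrac{H_q(a)}{H_q(b)}\tfrac{H_q^{b]}(x)}{H_q^{b]}(a)}$ (Lemma~\ref{lem: below}, via the $H_q$-Doob transform and the complementary-event trick) and then sends $a\to\r$ using Lemma~\ref{lem:Kqb}. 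Your decomposition is arguably cleaner, and it has a real dividend: once you check that $h_q(x)=H_q(x)-H_q(y)\E_x[q^{T_y}]$ is strictly positive (because $H_q$ is strictly increasing and $\E_x[q^{T_y}]\le 1$), uniqueness of the increasing $q$-harmonic function for $P^{y]}$ forces $h_q = c\,H_q^{y]}$ with $c>0$, and since $H_q^{y]}(x)\to\infty$ (as it is $q$-harmonic with $q<1$) you get $\lim_{x\to\r} H_q(x)/H_q^{y]}(x)=c$ directly, i.e.\ part (ii) falls out for free.

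However, there is a genuine gap in part (ii) as you wrote it. You claim that, because the killed chain has the same \emph{one-step upward} transition probabilities, ``both $H_q$ and $K_q^{y]}\delta_\r$ are governed by the same $\psi_q(k)$ for large $k$.'' This is false: $\psi_q^{y]}(k)=\E_k^{y]}[q^{T_{k+1}}]$ accounts for downward excursions below $y$ being killed, so $\psi_q^{y]}(k)<\psi_q(k)$ strictly for \emph{every} $k>y$, not just for $k$ near $y$. The ratio $\kappa_q^{y]}(x)=\prod_k \psi_q^{y]}(k)/\psi_q(k)$ is a product of factors each strictly less than $1$, and a priori it could vanish in the limit. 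Ruling this out is precisely the content of the paper's Lemma~\ref{lem:Kqb}, whose nontrivial half is a contradiction argument: assuming $\kappa_q^{y]}=0$, a first-step analysis of $\P_b^{H_q}(T_b^+ < T_a)$ as $a\to\r$ forces $\P_b^{H_q}(T_b^+<\infty)=1$, contradicting transience of the $H_q$-transformed chain. As your proposal stands, part (iii) is stated as depending on the normalization ``provided by part (ii),'' but part (ii)'s justification is what is missing. The fix is purely organizational: carry out the Riesz decomposition first, extract $\kappa_q^{y]}>0$ from it as above, and then state (ii) as a corollary rather than a prerequisite.
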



\begin{rk}
Up to minor modifications all statements hold for $q=1$ when $(X,\mP)$ is transient.	
\end{rk}

\begin{rk}
The terminology F$q$E comes from the birth-death case where these functions are usually called the fundamental solutions of the associated three-term recurrence equation and are $q$-excessive.	
\end{rk}
\begin{rk}

Recall, from \eqref{eq:gbd}, that  the $q$-potential of a  birth-death chain is given in terms of two excessive functions. Each of them is associated to one point in the Martin boundary which is, due to its skip-free property in both directions, reduced to two points. In our situation, due to the random size of downwards jumps, the description of the Martin boundary below is more delicate. To overcome this difficulty, we start by taking advantage of the upward skip-free property to identify, as in the case of birth-death chain, the $q$-excessive function $H_q$ attached to the unique point above of the Martin boundary. Then, we use the dual chain  which is downward skip-free to identify, in the dual way, the $q$-excessive function $\widehat{H}_q$ associated to the  unique point below of the Martin boundary.  Finally, instead of trying to describe the Martin boundary below for the original chain,  we introduce the Markov chain killed at the first passage time at a point, say $y$, below the starting  point. This latter being also an upward skip-free Markov chain,  we identify, in a similar way than for the original chain, its $q$-excessive function $\Hqy$ associated to the point above of the Martin boundary.  It turns out that these three $q$-excessive functions coming from three different chains  enable us to characterize the $q$-potential kernel at a fixed point for the original chain.
\end{rk}
\begin{rk}\label{rk:pk}
The expression \eqref{eq:bargxysum} of the $q$-potential  is expressed in terms of the function $H_q$ which itself is defined in terms of the Martin kernel, which is a normalized version of the $q$-potential. Although this may sound a bit awkward, this representation is very useful when applied to specific (and solvable) instances.  Indeed, in such case, the functions $H_q$  can  be computed by either solving the equation $q P f =  f$ which admits an unique increasing solution when $H_q  \in \mathcal{H}_q$, see Section \ref{sec:sfrw} for an application of this idea to the class of skip-free random walks. Otherwise, in the other case, i.e.~$H_q  \notin \mathcal{H}_q$ but is purely $q$-excessive, its computation can be realized through some asymptotic analysis carried out on some transforms  determining the transition kernel, such as the moment generating functions. This latter approach has been exploited in  \cite{APW18} to compute the corresponding function $H_q$ for  (continuous-time) branching process, see also \cite{Patie-Wang} for the study  of the potential and fluctuation theory of these chains including immigration. We also refer to Remark \ref{rk:ft} for further discussion on how to use  our approach to obtain the fluctuation identities of skip-free Markov chains.
\end{rk}
\begin{rk}
	Additional properties of $H_q$ and its relation with infinite divisibility are studied in Section \ref{sec:fpt} Corollary \ref{it:id}.
\end{rk}



We proceed with the proof of these statements which is split into several intermediate results.
\subsection{Proof of Theorem \ref{thm:green2}}
We start with the following result which relates the Martin kernel to the hitting time distribution.

\begin{lemma}\label{thm:hit}
		For any $x,y \in E$,
		\[	\E_x(q^{T_y}) = \dfrac{K_q\delta_y(x)}{K_q\delta_y(y)}. \]
\end{lemma}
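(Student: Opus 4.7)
The plan is to invoke Lemma \ref{lem:CW} with the stopping time $T = T_y$ and the $q$-excessive function $h_q = K_q(\cdot, y)$. This is natural because the conclusion of Lemma \ref{lem:CW} contains precisely the quantity $\E_x[q^{T_y}]$ and, after the $h$-transform, the process is forced to terminate at $y$, which will make the left-hand side equal to $1$.

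First I would justify that $K_q(\cdot, y) \in \mathcal{E}_q$. Since $y \in E$, the column function $x \mapsto G_q(x,y)$ of the $q$-potential kernel is a $q$-potential (an element of $\mathcal{P}_q \subseteq \mathcal{E}_q$); by irreducibility $G_q(\mfo,y) > 0$, so the normalization $K_q(x,y) = G_q(x,y)/G_q(\mfo,y)$ is well defined and $q$-excessive in $x$. In particular $\{K_q(\cdot,y) > 0\} = E$, so the $h^y$-transform with $h^y = K_q(\cdot,y)$ is defined on all of $E$, and we may apply Lemma \ref{lem:CW}.

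Next I would use Theorem \ref{thm:htransp} to get $\mP_x^{h^y}(T_y < \infty) = 1$. The theorem asserts that for $y \in E \cup \partial_m E$ the $h^y$-transformed chain satisfies $\mP_x^{h^y}(X_\zeta = y) = 1$, and moreover $\mP_x^{h^y}(\zeta = \infty) = 1$ if and only if $y$ lies in the minimal Martin boundary $\partial_m E$. Since here $y \in E$, we have $\zeta < \infty$ and $X_\zeta = y$ almost surely under $\mP_x^{h^y}$; in particular $T_y \leq \zeta < \infty$ almost surely.

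Finally, applying Lemma \ref{lem:CW} with this choice and using that $X_{T_y} = y$ on $\{T_y < \infty\}$ gives
\[
1 \;=\; \mP_x^{h^y}(T_y < \infty) \;=\; \frac{1}{K_q(x,y)}\,\E_x\!\left[q^{T_y}\,K_q(y,y)\,\1_{\{T_y < \infty\}}\right],
\]
and rearranging yields the claimed identity $\E_x[q^{T_y}] = K_q\delta_y(x)/K_q\delta_y(y)$. There is essentially no obstacle beyond verifying these two inputs; the only point requiring a little care is the appeal to Theorem \ref{thm:htransp} to guarantee $\mP_x^{h^y}$-a.s.\ finiteness of $T_y$, since the Lemma \ref{lem:CW} identity is otherwise only an inequality. (An alternative, equally short route avoids the $h$-transform entirely: the strong Markov property at $T_y$ gives $p^{(n)}(x,y) = \sum_{k \leq n} \mP_x(T_y = k)\,p^{(n-k)}(y,y)$, which upon multiplication by $q^n/\pi(y)$ and summation yields $G_q(x,y) = \E_x[q^{T_y}]\,G_q(y,y)$; dividing through by $G_q(\mfo,y)$ produces the same formula via the definition \eqref{def:Kqxy}.)
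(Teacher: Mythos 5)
Your proof is correct and takes essentially the same route as the paper: observe that $K_q\delta_y\in\mathcal{P}_q\subseteq\mathcal{E}_q$ (via the Riesz decomposition, Theorem \ref{thm:riesz}), invoke Theorem \ref{thm:htransp} to conclude that the $h^y$-transformed chain terminates at $y$ almost surely so that $\mP_x^{K_q\delta_y}(T_y<\infty)=1$, and then apply Lemma \ref{lem:CW} and rearrange. The elementary alternative you sketch at the end (first-entrance decomposition $p^{(n)}(x,y)=\sum_{k\le n}\mP_x(T_y=k)\,p^{(n-k)}(y,y)$, giving $G_q(x,y)=\E_x[q^{T_y}]\,G_q(y,y)$ after summing) is also valid and avoids the Martin-boundary machinery entirely, though the paper opts for the $h$-transform route since the same mechanism is reused repeatedly in the subsequent lemmas.
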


\begin{proof}
 By Theorem \ref{thm:riesz}, for any $y\in E$, $K_q\delta_y \in \mathcal{P}_q$ which leads, by Theorem \ref{thm:htransp},  to
	$\mP^{K_q\delta_y}(X_{\zeta} = y, \zeta < \infty) = 1$, that is, for any $x,y\in E$,
	\[ \mP_x^{K_q\delta_y}(T_y < +\infty) = 1. \]
Since, on the other hand, by  Lemma \ref{lem:CW}, we have, for any $x,y\in E$,
\[ \mP_x^{K_q\delta_y}(T_y < +\infty)= \E_x(q^{T_y})\dfrac{K_q\delta_y(y)}{K_q\delta_y(x)} \]
we complete the proof.
\end{proof}

\subsubsection{Proof of Theorem \ref{thm:green2}\eqref{it:mb}} Suppose that $x \vee \mfo \leq y \leq a $, where $x \vee \mfo = \max \{x,\mfo\}$. By means of Lemma \ref{thm:hit}, the upward skip-free property and the strong Markov property, we obtain that
	\begin{align}\label{eq:ratiogenTaK}
	K_q \delta_a(x)=\frac{\mathbb{E}_x(q^{T_a})}{\mathbb{E}_\mfo(q^{T_a})} &=\frac{\mathbb{E}_x(q^{T_y})\mathbb{E}_y(q^{T_a})}{\mathbb{E}_\mfo(q^{T_y})\mathbb{E}_y(q^{T_a})}=
	K_q \delta_y(x).
	\end{align}
Thus, for any $y\geq x \vee \mfo,\: K_q \delta_y(x)=K_q (x, x \vee \mfo)$. Hence, one can trivially define the function $H_q$ as the extended Martin kernel at $\r$, that is, for $x \in E$,
	$$H_q(x) = \lim_{y \to \r} K_q(x,y)= \int_{E \cup \partial_m E} K_q(x,y) \delta_{\r}(dy).$$
Hence if $X \in \Sf_{\infty}$ (resp.~$X \in \Sf \setminus \Sf_{\infty}$) then $\r \in \partial_{\P} E $ (resp.~$\r \in E$) and thus by theorems \ref{thm:uniq} and  \ref{thm:minE} (resp.~ and Theorem \ref{thm:riesz}), we get that   $H_q \in \mathcal{H}_q \cap \mathcal{E}_q^{min}$  (resp.~$H_q \in \mathcal{P}_q \cap \mathcal{E}_q^{min}$).
	Next note, from the first identity in \eqref{eq:ratiogenTaK}, that $H_q(\mfo) = 1$, and, for $x \leq y$,  $H_q(x)=K_q \delta_y(x)$. Hence, by Lemma \ref{thm:hit} we have, for any $x \leq y$,
	\begin{equation}\label{eq:uphit}
		\E_x(q^{T_y}) = \dfrac{H_q(x)}{H_q(y)},
	\end{equation}
which entails, by the irreducibility of $X$, that $H_q$ is positive everywhere since for any $x \in E$, the ratio $H_q(x)=\frac{\mathbb{E}_x(q^{T_a})}{\mathbb{E}_\mfo(q^{T_a})} > 0$. To see that the mapping $x \mapsto H_q(x)$ is increasing, one observes from again the strong Markov property and the upward skip-free property that (recall that $x  \leq y \leq a$)
	$$H_q(x) = \frac{\E_x(q^{T_y}) \E_y(q^{T_a})}{\E_\mfo(q^{T_a})} = \E_x(q^{T_y}) H_q(y) <  H_q(y).$$
To prove the uniqueness, we proceed by contradiction and thus assume that there exists a positive function $h_q \in \mathcal{E}_q^{min}$ (resp.~in $\mathcal{H}_q$ when $X\in \Sf_{\infty}$) which differs from  $H_q$ and  which is also an  increasing function on $E$. Then, according to Theorem \ref{thm:minE},  there exists $y_0 \in E$ (resp.~$y_0=\l$ or $y_0=\r$) such that for all $x\in E$, $h_q(x)=\frac{K_q(x,y_0)}{K_q(o,y_0)}$. Thus, on the one hand, from Lemma \ref{thm:hit} we deduce that for any $x\leq y_0$,
\[	\E_x(q^{T_y}) = \dfrac{K_q\delta_{y_0}(x)}{K_q\delta_{y_0}(y_0)}=\frac{h_q(x)}{h_q(y_0)}. \]
 This combines with \eqref{eq:uphit} yield  that $h_q(x)=H_q(x)$  for any $x\leq y_0$, which proves the claim when $H_q\in \mathcal{H}_q$ and $y_0=\r$. In the other cases, choose $x> y_0$ such that $h_q(x)\neq H_q(x)$. Then, observe from Theorem   \ref{thm:uniq} that $ \mP_x^{h_q}(T_{y_0} < +\infty) =1$. As   $ \mP_x^{h_q}(T_{y_0} < +\infty)=\frac{h_q(y_0)}{h_q(x)}\E_x(q^{T_{y_0}})$ 
  and $h_q$ is increasing and $x>y_0$, we get that $\E_x(q^{T_{y_0}})>1$ which is impossible. This completes the uniqueness property of $H_q$. 
To complete the proof of Theorem \ref{thm:green2}\eqref{it:mb}, we use similar arguments for deriving the stated properties of $\widehat{H}_q$ after recalling that the  dual chain $(X,\widehat{\mP})$ is downward skip-free.

\subsubsection{Proof of Theorem \ref{thm:green2}\eqref{it:phk}}
We start with the following claim which is a straightforward reformulation of  Theorem \ref{thm:green2}\eqref{it:mb} for the killed chains.

\begin{lemma}\label{lem:minEb}
Let $b \in E$ and choose a reference point $\mfo^{b]} \in E^{b]}$. Then, for all $0 < q <1$, the  function $H_q^{b]}(x) =  K^{b]}_q\delta_\r(x)$ defined on $E^{b]}$  is positive on $E^{b]}$, minimal, increasing $q$-harmonic for $P^{b]}$ with $H_q^{b]}(\mfo^{b]}) = 1$. Moreover, for any $b\leq x \leq a$,
	$$\mathbb{E}_x^{b]}(q^{T_a}) = \mathbb{E}_x(q^{T_a} \1_{\{T_a < T_{b]}\}}) = \dfrac{H_q^{b]}(x)}{H_q^{b]}(a)}.$$
\end{lemma}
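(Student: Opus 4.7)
The plan is to view Lemma \ref{lem:minEb} as a direct specialization of Theorem \ref{thm:green2}\eqref{it:mb} to the killed Markov chain $(X,\mP^{b]})$ in place of $X$. The first step is to check that this killed chain meets the hypotheses of that theorem: it is upward skip-free on the state space $E^{b]}=(b,\r]]$ (killing creates no new upward jumps and the transition probability $p(x,x+1)$ is unchanged on $E^{b]}$); it remains in $\Sf_\infty$ because the endpoint $\r$ is either absorbing or infinite just as for the original chain; and it is irreducible on $E^{b]}$, since any two points of $E^{b]}$ can be connected by a path that stays strictly above $b$, by combining the upward transitions $p(x,x+1)>0$ with the downward connectivity inherited from the irreducibility of $X$.

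Once these hypotheses are granted, the structural assertions about $H_q^{b]}=K_q^{b]}\delta_\r$ — positivity, minimality, increasingness, $q$-harmonicity for $P^{b]}$, and the normalization $H_q^{b]}(\mfo^{b]})=1$ — follow word-for-word from the corresponding statements of Theorem \ref{thm:green2}\eqref{it:mb} applied to $P^{b]}$, using the fact that $\r$ lies in the minimal Martin boundary of the killed chain. In particular the extended Martin kernel at $\r$ is well-defined on $E^{b]}$ by the same limiting argument as in \eqref{eq:ratiogenTaK}.

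For the hitting-time display, the identity
\[
\mathbb{E}_x^{b]}\bigl(q^{T_a}\bigr)=\mathbb{E}_x\bigl(q^{T_a}\1_{\{T_a<T_{b]}\}}\bigr)
\]
is immediate from the construction of the killed chain: under $\mP_x^{b]}$ the process agrees with $X$ on $\{n<T_{b]}\}$ and is sent to the cemetery otherwise, so the event $\{T_a<\infty\}$ under $\mP_x^{b]}$ coincides with $\{T_a<T_{b]}\}$ under $\mP_x$. The second equality $\mathbb{E}_x^{b]}(q^{T_a})=H_q^{b]}(x)/H_q^{b]}(a)$ is then a verbatim application of \eqref{eq:uphit} in the killed setting, since its derivation rests only on the upward skip-free property and the strong Markov property, both of which are inherited by $P^{b]}$.

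I expect no essential obstacle in this proof, since the lemma is by design a transparent transfer of Theorem \ref{thm:green2}\eqref{it:mb} to the killed chain. The only mildly technical point is the verification of irreducibility of $P^{b]}$ on $E^{b]}$, but this reduces to the elementary observation above that upward skip-freeness provides paths that avoid $[[\l,b]$ between any two states of $E^{b]}$.
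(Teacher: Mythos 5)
Your proof follows exactly the same route as the paper's (which is a one-liner: the killed chain is upward skip-free on $E^{b]}$, so Theorem \ref{thm:green2}\eqref{it:mb} and the identity \eqref{eq:uphit} apply). The structural part and the derivation of the hitting-time display are fine.

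The one place you over-claim is irreducibility of $(X,\mP^{b]})$ on $E^{b]}$. It is not true in general that two states of $E^{b]}$ can be connected by a path staying strictly above $b$: the downward connectivity of $X$ may be forced to pass through $[[\l,b]$. Concretely, take $E=\mathbb{N}_0$ with $p(x,x+1)=p(x,0)=\tfrac12$ for $x\ge 1$ and $p(0,0)=p(0,1)=\tfrac12$; this $X$ is irreducible and upward skip-free, but for $b=1$ the killed chain on $E^{b]}=\{2,3,\dots\}$ can only move up, so state $2$ is not reachable from $3$. Your sentence ``combining the upward transitions \ldots{} with the downward connectivity inherited from the irreducibility of $X$'' is therefore incorrect. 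The conclusions of the lemma nevertheless hold because the proof of Theorem \ref{thm:green2}\eqref{it:mb} uses irreducibility only through the facts that (i) from any state one can reach any higher state with positive probability (giving positivity of $H_q^{b]}$ and the two strong-Markov telescopings), and (ii) the Martin-boundary framework applies to the killed (substochastic) chain with $\r$ sitting in its minimal Martin boundary. Property (i) is preserved by killing because the one-step upward probability $p(x,x+1)>0$ is unchanged on $E^{b]}$; full irreducibility is not needed. It would be cleaner to say precisely this rather than assert irreducibility of $P^{b]}$.
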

\begin{proof}
Under $\mP^{b]}$, $X$ is an upward skip-free Markov chain on $E^{b]}$, the results follows from Theorem \ref{thm:green2}\eqref{it:mb} and the identity \eqref{eq:uphit}.
\end{proof}

The following lemma expresses the pgf of the \emph{downward} hitting times $(T_b, \mP^{[a}_x)$, where $a> x > b$, in terms of the F$q$E functions of $(X,\mP)$ and $(X,\mP^{b]})$.

\begin{lemma}\label{lem: below}
For any  $b < x < a$ and $0<q<1$, we have
	$$\mathbb{E}_x(q^{T_b} \1_{\{T_b < T_a\}}) = \dfrac{H_q(x)}{H_q(b)} - \dfrac{H_q(a)}{H_q(b)} \dfrac{H_q^{b]}(x)}{H_q^{b]}(a)}.$$
\end{lemma}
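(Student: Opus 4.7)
The plan is to obtain the identity from a single application of the strong Markov property, by decomposing the pgf of $T_a$ (which we already understand via $H_q$) according to whether the chain reaches $a$ directly or passes through $b$ first.

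First I would observe that since $b<x<a$ and $X$ is upward skip-free, the event $\{T_a < \infty\}$ splits disjointly as $\{T_a < T_b\} \cup \{T_b < T_a\}$ (the two times are never equal as $a \neq b$), and that on $\{T_b < T_a\}$ one has $X_{T_b}=b$. Applying the strong Markov property at the stopping time $T_b \wedge T_a$ gives
\begin{equation*}
\mathbb{E}_x(q^{T_a}) \;=\; \mathbb{E}_x\!\left(q^{T_a} \1_{\{T_a < T_b\}}\right) \;+\; \mathbb{E}_x\!\left(q^{T_b} \1_{\{T_b < T_a\}}\right)\, \mathbb{E}_b(q^{T_a}).
\end{equation*}

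Next I would identify each of the three known quantities in this relation using the results already proved. Since $x \leq a$, the upward skip-free identity \eqref{eq:uphit} gives $\mathbb{E}_x(q^{T_a}) = H_q(x)/H_q(a)$ and likewise $\mathbb{E}_b(q^{T_a}) = H_q(b)/H_q(a)$. The first term on the right-hand side is, by definition of the chain killed upon entering $[[\l,b]$, exactly $\mathbb{E}^{b]}_x(q^{T_a})$, which by Lemma \ref{lem:minEb} equals $H_q^{b]}(x)/H_q^{b]}(a)$. Substituting these expressions and solving for $\mathbb{E}_x(q^{T_b}\1_{\{T_b<T_a\}})$ yields
\begin{equation*}
\mathbb{E}_x(q^{T_b}\1_{\{T_b<T_a\}}) \;=\; \frac{H_q(a)}{H_q(b)}\left(\frac{H_q(x)}{H_q(a)} - \frac{H_q^{b]}(x)}{H_q^{b]}(a)}\right) \;=\; \frac{H_q(x)}{H_q(b)} - \frac{H_q(a)}{H_q(b)}\,\frac{H_q^{b]}(x)}{H_q^{b]}(a)},
\end{equation*}
as claimed.

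There is essentially no obstacle here beyond bookkeeping: the whole argument reduces to a one-step last-exit/first-passage decomposition, and the work has already been done in Theorem \ref{thm:green2}\eqref{it:mb} and Lemma \ref{lem:minEb}, which furnish the closed-form pgfs of the upward hitting times for $(X,\mathbb{P})$ and $(X,\mathbb{P}^{b]})$ respectively. The only subtle point to verify is that the factorization across $T_b \wedge T_a$ is legitimate, i.e.\ that on $\{T_b<T_a\}$ the post-$T_b$ chain starts deterministically at $b$; this is immediate because states are hit exactly (there is no overshoot for the single-point target $b$ under $\mathbb{P}_x$ since we are asking for $X_n = b$, not an interval).
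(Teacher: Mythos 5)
Your proof is correct, and it takes a genuinely different route from the paper's. You decompose $\mathbb{E}_x(q^{T_a})$ directly under $\mathbb{P}_x$ by a last-exit/first-passage argument at $T_b$ and solve a linear equation, using only \eqref{eq:uphit} and Lemma \ref{lem:minEb}. The paper instead works under the Doob $H_q$-transform: since $\mathbb{P}^{H_q}_x(X_\zeta=\r)=1$, one has $\mathbb{P}^{H_q}_x(T_b<T_a)=\mathbb{P}^{H_q}_x(T_{b]}<T_a)=1-\mathbb{P}^{H_q}_x(T_a<T_{b]})$, and then Lemma \ref{lem:CW} converts these three conditioned hitting probabilities into pgf's of the original chain. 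Your version avoids the $h$-transform and is arguably more elementary; the paper's version is stated in a form that is reused verbatim later (e.g.\ in the proof of Lemma \ref{lem:Kqb}), so it earns its keep as a lemma about the conditioned chain.

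One caution about exposition: the step you label as "by definition," namely
\[
\mathbb{E}_x\bigl(q^{T_a}\1_{\{T_a<T_b\}}\bigr)=\mathbb{E}^{b]}_x\bigl(q^{T_a}\bigr)=\mathbb{E}_x\bigl(q^{T_a}\1_{\{T_a<T_{b]}\}}\bigr),
\]
is \emph{not} a definition. It is the content of the equality $\{T_a<T_b\}=\{T_a<T_{b]}\}$, which holds only because $X$ is upward skip-free: a path started at $x>b$ that enters $[[\l,b]$ before $a$ must, in order to subsequently reach $a$, pass through $b$ (unit upward steps), so $T_{b]}<T_a$ forces $T_b<T_a$. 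This is exactly the point where the paper explicitly invokes the skip-free property (there, in the form "paths that drop below $b$ before $a$ must reach $b$ before $a$" under $\mathbb{P}^{H_q}_x$). You invoke skip-freeness earlier, for the disjoint splitting of $\{T_a<\infty\}$ and for $X_{T_b}=b$, but neither of those actually uses it — the splitting is trivial and a single-point hitting time has no overshoot for any chain. Moving the skip-free justification to the killed-chain identification closes the only real gap; otherwise the argument is complete and correct.
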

\begin{proof}
Since  by definition $H_q =  K_q\delta_\r$, we have, from Theorem \ref{thm:htransp} that,
\begin{equation} \label{eq:h1}
\mP^{H_q}(X_{\zeta} = \r) = 1.
\end{equation}
This combines with the upward skip-free property, see Lemma \ref{lem:CW}, yields that under $\mathbb{P}_x^{H_q}$ the sample paths of $X$ that drop below $b$ before hitting $a$ must reaches $b$ before reaching $a$.  That is \begin{equation}\label{eq:trick}
  \mathbb{P}_x^{H_q}(T_b < T_a)=\mathbb{P}_x^{H_q}(T_{b]} < T_a) = 1 - \mathbb{P}_x^{H_q}(T_a < T_{b]}),
\end{equation}  where we used again \eqref{eq:h1} for the second identity. Hence, an application of Lemma \ref{lem:CW} gives
	\begin{align*}
	\mathbb{P}_x^{H_q}(T_b < T_a)=\dfrac{H_q(b)}{H_q(x)} \mathbb{E}_x(q^{T_b} \1_{\{T_b < T_a\}}) &= 1 - \dfrac{H_q(a)}{H_q(x)} \mathbb{E}_x(q^{T_a}\1_{\{T_a < T_{b]}\}}) = 1 - \dfrac{H_q(a)}{H_q(x)}\dfrac{H_q^{b]}(x)}{H_q^{b]}(a)},
	\end{align*}
	where the last equality follows from Lemma \ref{lem:minEb}. Rearranging the terms provides the desired result.
\end{proof}
The proof of Theorem \ref{thm:green2}\eqref{it:phk} follows readily  after the following claim.
\begin{lemma}\label{lem:Kqb}
	 For $x \in E^{b]}$, define
	$$\kappa_q^{b]}(x) = \dfrac{H_q(x)}{H_q^{b]}(x)}.$$
	Then the mapping $x \mapsto \kappa_q^{b]}(x)$ is non-increasing  on $E^{b]}$ with $0 < \kappa_q^{b]}(x) < \infty$. Furthermore, $0<\kappa_q^{b]} = \lim_{x \to \r} \kappa_q^{b]}(x) < \infty.$
\end{lemma}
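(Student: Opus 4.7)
The plan is to decompose the proof into three stages: (i) the easy finiteness of $\kappa_q^{b]}(x)$ and monotonicity of $x \mapsto \kappa_q^{b]}(x)$, from which existence of the limit is immediate; (ii) a reduction of positivity of the limit to a probabilistic statement about the $H_q$-transformed chain; and (iii) a recurrence-versus-transience contradiction argument, which will be the main obstacle.

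For stage (i), both $H_q$ and $H_q^{b]}$ are strictly positive and finite on $E^{b]}$ by Theorem~\ref{thm:green2}\eqref{it:mb} and Lemma~\ref{lem:minEb}, so $\kappa_q^{b]}(x) \in (0, \infty)$. Combining \eqref{eq:uphit} and Lemma~\ref{lem:minEb} with the trivial bound $\mathbb{E}_x(q^{T_a} \mathbf{1}_{T_a < T_{b]}}) \le \mathbb{E}_x(q^{T_a})$ yields, for $b < x \le a$, the inequality $H_q^{b]}(x)/H_q^{b]}(a) \le H_q(x)/H_q(a)$, which rearranges to $\kappa_q^{b]}(a) \le \kappa_q^{b]}(x)$. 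Monotonicity together with the bound $\kappa_q^{b]}(x) \le \kappa_q^{b]}(\mathfrak{o}^{b]}) < \infty$ then gives existence of $\kappa_q^{b]} = \lim_{x \to \r} \kappa_q^{b]}(x) \in [0, \infty)$.

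For stage (ii), I start from Lemma~\ref{lem: below} and rearrange it as
\[
\kappa_q^{b]}(a)\, H_q^{b]}(x) \;=\; H_q(x) - H_q(b)\, \mathbb{E}_x\!\left( q^{T_b} \mathbf{1}_{T_b < T_a} \right).
\]
The upward skip-free property forces $T_a \uparrow \infty$ as $a \uparrow \r = \infty$, and in the absorbing case at finite $\r$ forces $T_b < T_\r$ whenever $T_b < \infty$. In either scenario, $\{T_b < T_a\}$ increases to $\{T_b < \infty\}$, so monotone convergence gives $\lim_{a \to \r} \mathbb{E}_x(q^{T_b} \mathbf{1}_{T_b < T_a}) = \mathbb{E}_x(q^{T_b} \mathbf{1}_{T_b < \infty})$. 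Applying Lemma~\ref{lem:CW} to $h_q = H_q$ and $T = T_b$, and using $X_{T_b} = b$ on $\{T_b < \infty\}$, yields $H_q(b)\, \mathbb{E}_x(q^{T_b} \mathbf{1}_{T_b < \infty}) = H_q(x)\, \mathbb{P}_x^{H_q}(T_b < \infty)$, which produces the key identity
\[
\kappa_q^{b]}\, H_q^{b]}(x) \;=\; H_q(x)\, \mathbb{P}_x^{H_q}(T_b = \infty).
\]
Positivity of $\kappa_q^{b]}$ therefore reduces to showing $\mathbb{P}_x^{H_q}(T_b = \infty) > 0$ for some $x > b$.

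Stage (iii) is the main obstacle. I argue by contradiction: assume $\mathbb{P}_x^{H_q}(T_b < \infty) = 1$ for every $x > b$. Conditioning on $X_1 = y$ under $\mathbb{P}_b^{H_q}$: if $y > b$ the assumption immediately gives $\mathbb{P}_y^{H_q}(T_b < \infty) = 1$; if $y < b$ then by Theorem~\ref{thm:htransp} the chain satisfies $X_\zeta = \r > b$ almost surely, and by the upward skip-free property it must pass through $b$ on its way up to $\r$, so $\mathbb{P}_y^{H_q}(T_b < \infty) = 1$; the case $y = b$ is trivial. Summing over $y$ gives $\mathbb{P}_b^{H_q}(T_b^+ < \infty) = 1$, so $b$ is recurrent and visited infinitely often $\mathbb{P}_b^{H_q}$-a.s. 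This contradicts Theorem~\ref{thm:htransp}, which guarantees $\mathbb{P}_b^{H_q}(X_\zeta = \r) = 1$ with $\r \neq b$, forcing only finitely many visits to $b$. Hence $\kappa_q^{b]} > 0$, completing the proof.
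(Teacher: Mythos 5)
Your proof is correct, and while it reaches the same contradiction as the paper (a first-step decomposition at $b$ under $\mathbb{P}^{H_q}$ forcing $\mathbb{P}_b^{H_q}(T_b^+<\infty)=1$, which is incompatible with $\mathbb{P}_b^{H_q}(X_\zeta=\r)=1$), it gets there along a genuinely different and in places cleaner route. For monotonicity, the paper forms the $H_q$-transform of $qP^{b]}$, computes ${}^{H_q}\mathbb{P}_x^{b]}(T_a<\infty)=\kappa_q^{b]}(a)/\kappa_q^{b]}(x)$, and then recognizes $1/(C\kappa_q^{b]}(\cdot))$ as the Martin kernel ${}^{H_q}K^{b]}\delta_\r(\cdot)$ of a transient upward skip-free chain (at $q=1$); you instead derive monotonicity directly from the elementary inequality $\mathbb{E}_x(q^{T_a}\1_{\{T_a<T_{b]}\}})\leq\mathbb{E}_x(q^{T_a})$ read through \eqref{eq:uphit} and Lemma~\ref{lem:minEb}, which avoids the $H_q$-transform machinery entirely for this step. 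For positivity, the paper works with the assumption $\lim_a{}^{H_q}\mathbb{P}_x^{b]}(T_a<\infty)=0$ and runs its first-step analysis of $\mathbb{P}_b^{H_q}(T_b^+<T_a)$ with $a$ still in play, passing to the limit term by term in \eqref{eq:pk}; you first isolate the clean identity $\kappa_q^{b]}\,H_q^{b]}(x)=H_q(x)\,\mathbb{P}_x^{H_q}(T_b=\infty)$ (via Lemma~\ref{lem: below}, monotone convergence, and Lemma~\ref{lem:CW}), which reduces positivity of $\kappa_q^{b]}$ to a purely probabilistic statement and lets you run the first-step analysis without any residual $a$-limit. The trade-off is that the paper's $H_q$-transform computation, while more elaborate, simultaneously sets up \eqref{eq:HqPb} which is reused later, whereas your identity is a nice self-contained lemma that makes the equivalence $\kappa_q^{b]}>0\iff\mathbb{P}_x^{H_q}(T_b=\infty)>0$ transparent. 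Both arguments share the same implicit step of treating $\lim_{a\to\r}\1_{\{T_b<T_a\}}=\1_{\{T_b<\infty\}}$ when $\r$ is a finite absorbing boundary (effectively allowing $a=\r$ in the limit), so you are not introducing any gap not already present in the paper's Lemma~\ref{lem: belowf}.
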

\begin{proof}
	It is clear that, for all  $x \in E^{b]}$, $0 < \kappa_q^{b]}(x) < \infty$, since both $H_q$ and $H_q^{b]}$ are positive and finite on $E^{b]}$.
	Next, for any $x \in E^{b]}$,
	$$qP^{b]}H_q(x) \leq qPH_q(x) \leq H_q(x),$$
	where the first inequality follows from the fact that $P^{b]}$ is the restriction of $P$ to $E^{b]}$, and we use that $H_q \in \mathcal{E}_q$ in the second inequality. Therefore, $H_q$ (restricted on $E^{b]}$) is $q$-excessive for $P^{b]}$. Thus, one may define the $H_q$-transform of $qP^{b]}$ by
	$$^{H_q}P^{b]}(x,y) = \dfrac{H_q(y)}{H_q(x)} q P^{b]}(x,y),$$
	where $x,y \in \{x \in E^{b]}:\: H_q(x) > 0\} = E^{b]}$ by Theorem \ref{thm:hit}. Using Lemma \ref{lem:CW} and Lemma \ref{lem:minEb},  we have, for any $x \leq a$,
	\begin{align}\label{eq:HqPb}
	^{H_q}\mP_x^{b]}(T_a < \infty) &= \dfrac{H_q(a)}{H_q(x)} \mathbb{E}_x^{b]}(q^{T_a}) = \dfrac{H_q^{b]}(x)}{H_q(x)} \dfrac{H_q(a)}{H_q^{b]}(a)} = \dfrac{\kappa_q^{b]}(a)}{\kappa_q^{b]}(x)} .
	\end{align}
	Since $(X,^{H_q}\mP_x^{b]})$ is a transient upward skip-free Markov chain, using Lemma \ref{thm:hit}  and  Theorem \ref{thm:green2}\eqref{it:mb} for $q=1$, one easily deduces, with the obvious notation, that  $\frac{1}{C \kappa_q^{b]}(x)}=  \:\: \:^{H_q}K^{b]}\delta_\r(x), $ for some $C>0$.  
Thus the mapping $x \mapsto \kappa_q^{b]}(x)$ is non-increasing.  Henceforth
	$\kappa_q^{b]} = \lim_{x \to \r} \kappa_q^{b]}(x) \leq \kappa_q^{b]}(b+1) < \infty$. Observing that both $H_q(\r) $ and $H_q^{b]}(\r)$ are finite  when $X \in \Sf \setminus \Sf_{\infty}$, we readily get that  $\kappa_q^{b]}>0$ which completes the proof in this case.  It remains to show that
	\begin{align}\label{eq:Kqbfinite}
	\lim_{a \rightarrow \r} \dfrac{\kappa_q^{b]}(a)}{\kappa_q^{b]}(x)} &=  {\lim_{a \rightarrow \r}} ^{H_q}\mathbb{P}_x^{b]}(T_a < \infty) > 0,
	\end{align}
when $X \in  \Sf_{\infty}$.
To this end, we   assume the contrary, that is,  $\lim_{a \rightarrow \r} {}^{H_q}\mathbb{P}_x^{b]}(T_a < \infty) = 0$. Since ${}^{H_q}\mathbb{P}_x^{b]}(T_a < \infty) = \mathbb{P}_x^{H_q}(T_a < T_{b]})$, the assumption becomes $\lim_{a \rightarrow \r} \mathbb{P}_x^{H_q}(T_a < T_{b]}) = 0$ and \eqref{eq:trick}  leads to
	\begin{align}\label{eq:assump}
	\lim_{a \rightarrow \r} \mathbb{P}_x^{H_q}(T_a < T_b) = 0.
	\end{align}
Next, by means of a first step analysis and the upward skip-free property, we obtain
	\begin{align} \label{eq:pk}
	\mathbb{P}_b^{H_q}(T_b^+ < T_a) &= p^{H_q}(b,b+1) \mathbb{P}_{b+1}^{H_q}(T_b^+ < T_a) + p^{H_q}(b,b) + \sum_{y < b} p^{H_q}(b,y) \mathbb{P}_{y}^{H_q}(T_b^+ < T_a).
	\end{align}
	Taking $a \to \r$, the left-hand side converges to $\mathbb{P}_b^{H_q}(T_b^+ < \infty)$ (recall that $T_b^+ = \inf\{n \geq 1;\: X_n = b\}$) due to the monotone convergence theorem, the upward skip-free property and  the fact that $X \in \Sf_{\infty}$. On the right-hand side of \eqref{eq:pk}, the first term converges to $p^{H_q}(b,b+1)$ as a result of \eqref{eq:assump}, while the third term converges to $\sum_{y < b} p^{H_q}(b,y) \mathbb{P}_{y}^{H_q}(T_b^+ < \infty)$ by invoking the  dominated convergence theorem. Therefore, we arrive at
	\begin{align*}
	\mathbb{P}_b^{H_q}(T_b^+ < \infty) &= p^{H_q}(b,b+1) + p^{H_q}(b,b) + \sum_{y < b} p^{H_q}(b,y) \mathbb{P}_{y}^{H_q}(T_b^+ < \infty) \\
	&= p^{H_q}(b,b+1) + \sum_{y \leq b} p^{H_q}(b,y) = \mathbb{P}_{b}^{H_q}(\zeta>1)
	= 1,
	\end{align*}
	where the second equality comes from the identity $\mathbb{P}_{y}^{H_q}(T_b^+ < \infty) = 1$ which holds since $\mP_y^{H_q}(X_{\zeta} = \r) = 1$ and $y< b$, while the last equality is due to Theorem \ref{thm:htransp} with the fact that $H_q \in \mathcal{H}_q$ since $X \in \Sf_{\infty}$. This is not possible since $X$ is transient. Therefore, we conclude that $\kappa_q^{b]} > 0$.
\end{proof}


\subsection{Proof of Theorem \ref{thm:green2}\eqref{it:poh}}
We start with the following extension of Lemma \ref{lem: below}.
\begin{lemma}\label{lem: belowf}
For any  $x,y \in E$ and $0<q<1$, we have
	\begin{equation}\label{eq:hitg}
	\mathbb{E}_x\left(q^{T_y}\right)= \dfrac{H_q(x) - \Hqy(x)}{H_q(y)}.
	\end{equation}
\end{lemma}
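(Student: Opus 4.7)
The plan is to split the argument into the two cases $x \leq y$ and $x > y$, handle the first by the upward skip-free identity already established, and recover the second by taking a monotone limit in Lemma~\ref{lem: below}.

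\textbf{Case $x \leq y$.} Here the skip-free identity \eqref{eq:uphit} gives directly $\mathbb{E}_x(q^{T_y}) = H_q(x)/H_q(y)$, and by the convention $\Hqy(x) = 0$ for $x \leq y$ (stated in Theorem~\ref{thm:green2}\eqref{it:phk}), the right-hand side of \eqref{eq:hitg} reduces to the same quantity. So this case is immediate.

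\textbf{Case $x > y$.} I would apply Lemma~\ref{lem: below} with $b = y$ and let $a \to \r$ along a sequence in $E$. On the left-hand side, $\{T_y < T_a\} \uparrow \{T_y < \zeta\}$ as $a \to \r$, and since $0 < q < 1$ the random variable $q^{T_y} \1_{\{T_y < T_a\}}$ is bounded by $1$ and increases (on the event $\{T_y < \infty\}$) to $q^{T_y}$; monotone (or dominated) convergence then yields
\[
\lim_{a \to \r} \mathbb{E}_x\bigl(q^{T_y} \1_{\{T_y < T_a\}}\bigr) = \mathbb{E}_x(q^{T_y}).
\]
On the right-hand side of the formula in Lemma~\ref{lem: below},
\[
\frac{H_q(x)}{H_q(y)} - \frac{H_q(a)}{H_q(y)}\,\frac{H_q^{y]}(x)}{H_q^{y]}(a)} = \frac{H_q(x)}{H_q(y)} - \frac{H_q^{y]}(x)}{H_q(y)}\cdot\frac{H_q(a)}{H_q^{y]}(a)},
\]
the ratio $H_q(a)/H_q^{y]}(a)$ converges to $\kappa_q^{y]}$ as $a \to \r$, by the characterization of $\kappa_q^{y]}$ from Theorem~\ref{thm:green2}\eqref{it:phk} together with the identity $\Hqy = \kappa_q^{y]} H_q^{y]}$. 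Combining with the definition $\Hqy(x) = \kappa_q^{y]} H_q^{y]}(x)$, the limit of the right-hand side is exactly $\bigl(H_q(x) - \Hqy(x)\bigr)/H_q(y)$, which matches \eqref{eq:hitg}.

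The only point requiring a bit of care is justifying the interchange of limit and expectation on the LHS, and confirming that the factor $H_q(a)/H_q^{y]}(a)$ indeed converges to $\kappa_q^{y]}$ (rather than merely along subsequences). Both are straightforward: the first follows since $q^{T_y}\mathbf{1}_{\{T_y<T_a\}}$ is dominated by $1$ and increases pointwise, and the second is the content of Lemma~\ref{lem:Kqb} (which shows the monotone limit exists and is strictly positive and finite). No other case-analysis is needed, since $x > y$ together with the case $x \leq y$ exhausts $E \times E$.
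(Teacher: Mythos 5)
Your proof is correct and follows essentially the same route as the paper: split into $x \le y$ (immediate from \eqref{eq:uphit} and the convention $\Hqy(x)=0$) and $x>y$, then pass to the limit $a\to\r$ in Lemma~\ref{lem: below} using monotone convergence on the left together with $X\in\Sf_\infty$, and Lemma~\ref{lem:Kqb} on the right to identify $\lim_{a\to\r} H_q(a)/H_q^{y]}(a)=\kappa_q^{y]}$, hence $\kappa_q^{y]}H_q^{y]}(x)=\Hqy(x)$. The only difference is that you spell out the justification for the interchange of limit and expectation and the convergence of the ratio, which the paper's terser proof leaves implicit.
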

\begin{proof}
The case when $x\leq y$ is proved in \eqref{eq:uphit}. Next assume that $x>y$. Thanks to the upward skip-free property of $X$, for any $b \in E$ the mapping $\1_{\{T_b < T_a\}}$ is increasing in $a$ large enough. Then, the monotone convergence theorem and the fact that $X \in \Sf_{\infty}$ give $ \lim_{a \rightarrow \r}\mathbb{E}_x(q^{T_b} \1_{\{T_b < T_a\}}) = \mathbb{E}_x(q^{T_b})$. The sought  result follows immediately from Lemma \ref{lem: below} and Lemma \ref{lem:Kqb}.
\end{proof}
We  are now ready to prove the expression \eqref{eq:bargxysum}. First using \eqref{eq:uphit}, Lemma \ref{thm:hit} and the definition of the Martin kernel in \eqref{eq:defmk},  we obtain, for any $x \leq \mfo$,
\begin{align} \label{eq:ho}
\mathbb{E}_x(q^{T_\mfo}) = H_q(x) = \dfrac{G_q(x,\mfo)}{G_q(\mfo,\mfo)}= \dfrac{G_q(x,\mfo)}{C_q}.
\end{align}
Next, for sake of clarity we state the analogue of the identity \eqref{eq:uphit} for  the dual chain  $(X,\widehat{\mP})$.
\begin{lemma}\label{lem:dualminEexnar}
	For all $0 < q < 1$ and any $x \leq y$,
	\begin{align}\label{eq:hatHq}
	\widehat{\mathbb{E}}_y(q^{T_x}) &= \dfrac{\widehat{H}_q(y)}{\widehat{H}_q(x)}.
	\end{align}
\end{lemma}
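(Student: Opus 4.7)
The plan is to recognize that this lemma is simply the analogue of \eqref{eq:uphit} for the dual chain $(X,\widehat{\mathbb{P}})$, and to transcribe the proof of Theorem \ref{thm:green2}\eqref{it:mb} with the direction of order reversed. The single observation that licenses this transcription is that $\widehat{P}$ is \emph{downward} skip-free: the defining relation $\widehat{p}(y,x)\pi(y)=p(x,y)\pi(x)$ combined with the hypothesis $p(x,x+k)=0$ for $k\geq 2$ yields $\widehat{p}(x+k,x)=0$ for $k\geq 2$, while $\widehat{p}(x+1,x)>0$ since $p(x,x+1)>0$. Thus the roles of $\l$ and $\r$, and of ``increasing'' and ``decreasing'', are interchanged for $\widehat{P}$, and $\widehat{H}_q=\widehat{K}_q\delta_\l$ plays, in the dual picture, the role that $H_q=K_q\delta_\r$ plays for $P$.

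Granted this, Lemma \ref{thm:hit} applied to the dual chain gives $\widehat{\mathbb{E}}_y(q^{T_x})=\widehat{K}_q\delta_x(y)/\widehat{K}_q\delta_x(x)$, and the strong Markov computation of \eqref{eq:ratiogenTaK}, now exploiting that a downward skip-free chain from a higher state to a lower one must visit every intermediate state, shows that for fixed $u\in E$ the quantity $\widehat{K}_q\delta_z(u)$ is independent of $z$ on the range $z\leq u\wedge\mfo$. Letting $z\to\l$ identifies this common value with $\widehat{H}_q(u)$. In the subcase $x\leq\mfo$, this instantly produces $\widehat{K}_q\delta_x(u)=\widehat{H}_q(u)$ for $u\in\{x,y\}$, yielding the identity.

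The remaining configurations ($x>\mfo$ or $x\leq\mfo\leq y$) are bookkeeping handled by routing the passage through $\mfo$ using the same strong Markov/skip-free combination applied to $\widehat{P}$: for example, when $\mfo<x\leq y$ one first derives $\widehat{H}_q(u)=\widehat{\mathbb{E}}_u(q^{T_\mfo})$ for $u\geq\mfo$ by the same argument, and then concludes from the factorisation $\widehat{\mathbb{E}}_y(q^{T_\mfo})=\widehat{\mathbb{E}}_y(q^{T_x})\,\widehat{\mathbb{E}}_x(q^{T_\mfo})$; the third case is symmetric, using in addition that $\widehat{H}_q(x)=1/\widehat{\mathbb{E}}_\mfo(q^{T_x})$ when $x\leq\mfo$. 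No step presents a genuine obstacle; the entire argument is the mirror image of the proof of \eqref{eq:uphit}, and the only delicate point is the case split around the reference point $\mfo$, itself already implicit in the proof of \eqref{eq:uphit}.
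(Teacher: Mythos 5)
Your proposal is correct and is precisely the argument the paper has in mind: the paper gives no separate proof for this lemma, merely remarking after Theorem~\ref{thm:green2}\eqref{it:mb} that the properties of $\widehat{H}_q$ follow ``by similar arguments after recalling that the dual chain $(X,\widehat{\mP})$ is downward skip-free,'' and then stating this identity as the dual of \eqref{eq:uphit}. You have faithfully transcribed the proof of \eqref{eq:uphit} under the order-reversal induced by duality, including the correct three-way case split around the reference point $\mfo$, so there is nothing to add.
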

A specific application of the previous result yields, for any $x \leq \mfo$, that
\begin{align}\label{eq:hatHq2}
\widehat{\mathbb{E}}_\mfo(q^{T_x}) &= \dfrac{1}{\widehat{H}_q(x)} = \dfrac{\widehat{G}_q(\mfo,x)}{\widehat{G}_q(x,x)} = \dfrac{ G_q(x,\mfo)}{G_q(x,x)},
\end{align}
where we use the identity $\widehat{H}_q(\mfo) = 1$ in the first equality, the dual version of \eqref{eq:ho} for the second one and the integrated version of the dual identity \eqref{eq:dualdef} for the last one. Combining \eqref{eq:ho} and \eqref{eq:hatHq2}, we get, for any $x \leq \mfo$,
\begin{align}\label{eq:gqyy}
G_q(x,x) = C_q H_q(x) \widehat{H}_q(x).
\end{align}
For any $x \geq \mfo$, we reverse the role of $x$ and $\mfo$  to obtain, respectively,
\begin{align}
\mathbb{E}_\mfo(q^{T_x}) &= \dfrac{1}{H_q(x)} = \dfrac{G_q(\mfo,x)}{G_q(x,x)}, \label{eq:hatHqrev}\\
\widehat{\mathbb{E}}_x(q^{T_\mfo}) &= \widehat{H}_q(x) = \dfrac{\widehat{G}_q(x,\mfo)}{\widehat{G}_q(\mfo,\mfo)} = \dfrac{G_q(\mfo,x)}{G_q(\mfo,\mfo)} \label{eq:hatHqrev2}.
\end{align}
Combining \eqref{eq:hatHqrev} and \eqref{eq:hatHqrev2}, we again arrive at \eqref{eq:gqyy}, which shows that \eqref{eq:gqyy} holds for all $x \in E$. Note that \eqref{eq:gqyy} holds regardless of the boundary condition at $\r$. In particular, when $X \in \Sf_{\infty}$,  \eqref{eq:hitg}, \eqref{eq:ho} (replacing  $y$ by $\mfo$), \eqref{eq:gqyy} and Lemma \ref{thm:hit} give \eqref{eq:bargxysum} which complete the proof of Theorem \ref{thm:green2}.

\subsection{$X \in \Sf \setminus \Sf_{\infty}$}\label{sec:regular}

Throughout this section we work under the following hypothesis that $X \in \Sf \setminus \Sf_{\infty}$, that is, $\r$ is a regular boundary. The state space $E$ includes the point $\r$, that is, $E = [[\l,\r]$. Under $\mathbb{P}^{[\r}$, $X$ is a skip-free Markov chain on the state space $E^{[\r} = [[\l,\r)$, which is killed whenever the process hits the state $\r$. We are ready to state the following.

\begin{proposition}\label{prop:regular}
Suppose $\r$ is a regular boundary. Then for any $x > b$,
$$\mathbb{E}_x(q^{T_b}) = \begin{cases}
\dfrac{H_q(x) - \bar{K}_q^{b]} H_q^{b]}(x)}{H_q(b)} &\text{if $\r > x > b,$}\\
\dfrac{G_q(\r,b)}{G_q(b,b)} &\text{if $\r = x > b,$}
\end{cases}$$
where $\bar{K}_q^{b]} = \dfrac{G_q^{[\r}(\mfo,\mfo) \widehat{H}_q^{[\r}(b) }{G_q(\mfo,\mfo) \widehat{H}_q(b)}$ and $G_q(\r,b) = \dfrac{\eta_b(q)}{c + (1-q)p + \sum_{j \in E} (1-q) \eta_j(q)},$
where $c \geq 0$, $p > 0$, and $\eta(q) = (\eta_j(q))_{j \in E}$ is a family of non-negative numbers that satisfies, for any $0<s,q<1$,
$$ s \eta(s) - q \eta(q) = (s - q) \eta(q) G_q^{[\r},$$
where $G_q^{[\r} = (G_q^{[\r}(i,j))_{i,j \in E}$.
\end{proposition}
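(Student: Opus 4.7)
The plan is to reduce the regular case to quantities already controlled by the $\Sf_\infty$ theory of Theorem \ref{thm:green2}, via a strong Markov decomposition at the boundary $\r$. The case $x = \r$ is immediate: from Lemma \ref{thm:hit} and the definition \eqref{def:Kqxy} of $K_q$, one obtains the identity $\mathbb{E}_z(q^{T_y}) = G_q(z,y)/G_q(y,y)$ for any states $z,y$, and specializing at $(z,y) = (\r,b)$ gives the second formula.

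For $b < x < \r$, the strong Markov property at $T_b \wedge T_\r$ yields
$$\mathbb{E}_x(q^{T_b}) = \mathbb{E}_x(q^{T_b}\1_{\{T_b < T_\r\}}) + \mathbb{E}_x(q^{T_\r}\1_{\{T_\r < T_b\}})\mathbb{E}_\r(q^{T_b}).$$
I would compute the first summand via Lemma \ref{lem: below}, taking $b < x < a < \r$ and letting $a \to \r^-$. The event $\{T_b < T_a\}$ is monotone increasing in $a$ by upward skip-freeness, while $H_q(\r)$ and $H_q^{b]}(\r)$ are finite in the regular case by Theorem \ref{thm:green2}\eqref{it:mb}-\eqref{it:phk}; monotone convergence then produces the limit. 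For the second summand, upward skip-freeness forces $\{T_\r < T_b\} = \{T_\r < T_{b]}\}$ (any path from $x > b$ reaching $\r$ while avoiding $b$ must stay above $b$), so Lemma \ref{lem:minEb} applied with $a = \r$ gives $\mathbb{E}_x(q^{T_\r}\1_{\{T_\r < T_b\}}) = H_q^{b]}(x)/H_q^{b]}(\r)$; the last factor equals $G_q(\r,b)/G_q(b,b)$ by the identity above.

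Combining the three pieces, the expression factors as
$$\mathbb{E}_x(q^{T_b}) = \dfrac{1}{H_q(b)}\Big(H_q(x) - \dfrac{H_q^{b]}(x)}{H_q^{b]}(\r)}\Big(H_q(\r) - \dfrac{G_q(\r,b)H_q(b)}{G_q(b,b)}\Big)\Big),$$
reducing the first formula to the algebraic identification
$$\bar{K}_q^{b]} = \dfrac{1}{H_q^{b]}(\r)}\Big(H_q(\r) - \dfrac{G_q(\r,b) H_q(b)}{G_q(b,b)}\Big).$$
This identification is the main technical step. My approach would be to exploit the strong Markov decomposition of the resolvent, $G_q(x,y) = G_q^{[\r}(x,y) + \mathbb{E}_x(q^{T_\r}) G_q(\r,y)$ (obtained by splitting sample paths according to whether $\r$ is visited before time $n$), together with \eqref{eq:gqyy}, its analogue for the killed chain, and $\mathbb{E}_\mfo(q^{T_\r}) = 1/H_q(\r)$ from \eqref{eq:hatHqrev}. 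The normalization $\lim_{x \to \r} H_q^{b]}(x)/H_q(x) = 1$ from Theorem \ref{thm:green2}\eqref{it:phk}, which in the regular case becomes the evaluation $H_q^{b]}(\r) = H_q(\r)$, is crucial; after it is invoked, the right-hand side collapses to the stated form $G_q^{[\r}(\mfo,\mfo)\widehat{H}_q^{[\r}(b)/(G_q(\mfo,\mfo)\widehat{H}_q(b))$.

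Finally, the explicit formula for $G_q(\r,b)$ comes from Feller's classification of regular boundaries: the Markovian extensions of the minimal (killed-at-$\r$) semigroup form a three-parameter family indexed by a killing rate $c\ge 0$ at $\r$, a sticking parameter $p > 0$, and a jump (exit) measure $\eta(q)$ on $E$, and the resolvent equation $sR_s - qR_q = (s-q)R_sR_q$ restricted to the row corresponding to $\r$ is exactly the compatibility condition $s\eta(s) - q\eta(q) = (s-q)\eta(q) G_q^{[\r}$. Reading off $G_q(\r,b)$ as the $(\r,b)$-entry of the resolvent of the extended semigroup yields the stated formula, with denominator $c + (1-q)p + (1-q)\sum_j\eta_j(q)$ encoding the total mass split between killing, sticking and jumping through $\eta$. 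The delicate point here is the rigorous boundary construction; once that is in place the $(\r,b)$-entry is read off directly.
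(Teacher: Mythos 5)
Your proposal is correct and takes a genuinely different route from the paper's. The paper works at the level of the Green function: it proves the key identity \eqref{eq:rreg} directly by applying Lemma~\ref{lem:excursion} at $(b,b)$ together with \eqref{eq:gqyy} for both $(X,\mP)$ and $(X,\mP^{[\r})$, and then computes $\E_x(q^{T_b})=G_q(x,b)/G_q(b,b)$ by another application of Lemma~\ref{lem:excursion} followed by the $\Sf_\infty$ representation \eqref{eq:bargxysum} for the killed chain. You instead decompose the hitting time probabilistically at $T_b\wedge T_\r$, invoke Lemma~\ref{lem: below} (with $a=\r$; the passage $a\to\r$ is just evaluation since $\r\in E$, so the monotone convergence argument you sketch is not needed) and Lemma~\ref{lem:minEb}, and then reduce the residual algebraic identification of $\bar K_q^{b]}$ to exactly \eqref{eq:rreg}; your observation $H_q^{b]}(\r)=H_q(\r)$ (the regular-boundary specialization of the normalization in Theorem~\ref{thm:green2}\eqref{it:phk}) plays the same role as the paper's observation $H_q^{[\r}(x)=H_q(x)$. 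What the paper's route buys is that it never needs the normalization of $\mathbf H_q^{b]}$ to be made explicit, since it manipulates $G_q(x,b)$ directly; what your route buys is a cleaner probabilistic interpretation of the three terms. For the explicit form of $G_q(\r,b)$ both arguments defer to the same boundary-extension theory — the paper simply cites Reuter's Theorem~3.1, whereas you sketch the Feller-type parameterization (killing rate $c$, stickiness $p$, exit family $\eta$) and the resolvent compatibility equation, which is consistent with that reference but, as you acknowledge, not a self-contained construction.

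One small point worth being explicit about: the identity $\{T_\r<T_b\}=\{T_\r<T_{b]}\}$ is the complement of the event equality $\{T_{b]}<T_\r\}=\{T_b<T_\r\}$ used in \eqref{eq:trick}, and its justification is the upward skip-free property (a path below $b$ that later reaches $\r$ must pass through $b$), not merely "staying above $b$"; the conclusion is right but the one-line justification as written is slightly off.
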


In order to prove this Proposition, we need the following classical result that enables to connect the $q$-potential of $G_q$ and  $G_q^{[a}$ (resp.~ $\widehat{G}_q$ and $\widehat{G}_q^{b]}$).

\begin{lemma}\label{lem:excursion}
	We have, for any $0 < q < 1$,
	\begin{align}	
	G_q(x,y) &= G_q^{[a}(x,y) + \E_x(q^{T_a}) G_q(a,y)  \label{eq:killedabv} , \quad x,y \in E^{[a}, \\	
	\widehat{G}_q(x,y) &= \widehat{G}_q^{b]}(x,y) + \widehat{\E}_x(q^{T_b}) \widehat{G}_q(b,y), \quad x,y \in E^{b]}  \label{eq:killedbel}.
	\end{align}
\end{lemma}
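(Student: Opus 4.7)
The plan is to apply the strong Markov property at $T_a$ for \eqref{eq:killedabv} (and at $T_b$ for the dual identity), exploiting the upward (respectively downward) skip-free property to identify the first hit of a half-line with the first hit of its endpoint. This is the standard first-passage excursion decomposition for the $q$-potential, and nothing beyond careful bookkeeping should be required.

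For \eqref{eq:killedabv}, I would start from the occupation-time representation
\begin{align*}
\pi(y) G_q(x,y) &= \E_x\left[\sum_{n=0}^{\infty} q^n \Id{X_n = y}\right] \\
&= \E_x\left[\sum_{n=0}^{T_a-1} q^n \Id{X_n = y}\right] + \E_x\left[\Id{T_a<\infty}\,q^{T_a}\sum_{m=0}^{\infty} q^m \Id{X_{T_a+m} = y}\right],
\end{align*}
where the inner sum in the second term is empty on $\{T_a=\infty\}$, and where shifting indices produces the factor $q^{T_a}$. Since $y < a$ and $X$ is upward skip-free, the first passage into $[a,\r]]$ occurs exactly at state $a$, so the pre-$T_a$ sum is precisely the $q$-potential of the chain killed upon entering $[a,\r]]$, i.e.\ $\pi(y) G_q^{[a}(x,y)$. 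The strong Markov property applied at $T_a$ together with $X_{T_a}=a$ on $\{T_a<\infty\}$ turns the second term into $\E_x[q^{T_a}]\,\pi(y) G_q(a,y)$, using also the convention $q^{T_a}=0$ on $\{T_a=\infty\}$. Dividing through by $\pi(y)$ yields \eqref{eq:killedabv}.

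For \eqref{eq:killedbel}, the identical argument, applied to the $\pi$-dual chain $(X,\widehat{\P})$, gives the result. Here the key point is that $\widehat{P}$ is downward skip-free (this is the dual of the upward skip-free property of $P$), so that the first $\widehat{\P}$-entrance into $[[\l,b]$ occurs at exactly $b$, and hence $\widehat{G}_q^{b]}(x,y)$ matches the pre-$T_b$ occupation sum under $\widehat{\P}$; strong Markov at $T_b$ supplies the second term.

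The only nontrivial step is the identification of the first hit of the half-line with $T_a$ (respectively $T_b$): without the skip-free property there would be an overshoot and the decomposition would have to be summed over all possible entry points, producing a more complicated expression. With skip-freeness, everything reduces to the single-point first-passage formula above.
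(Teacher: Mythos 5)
Your proof is correct and follows essentially the same route as the paper's: both decompose the $q$-potential by the time of first passage to $a$ (identified with the first entrance into $[a,\r]]$ by upward skip-freeness) and apply the strong Markov property at $T_a$, with the dual identity obtained by running the same argument under $\widehat{\P}$ using that $\widehat{P}$ is downward skip-free. The only cosmetic difference is that the paper conditions $\P_x(X_n=y)$ on $\{T_a=k\}$ for each fixed $n$ and then sums over $n$ with weights $q^n$, whereas you split the occupation series $\sum_n q^n\Id{X_n=y}$ at $T_a$ directly; these are the same computation via Fubini.
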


\begin{proof}
 Since $X$ is upward skip-free, $T_{[a} = T_a$ and  for any $n \in \mathbb{N}$, $a \in E$, $x,y \in E^{[a}$, we have
	\begin{align*}
	\mathbb{P}_x(X_n = y) &= \mathbb{P}_x^{[a}(X_n = y) + \sum_{k = 1}^{n-1} \mathbb{P}_x(X_n = y | T_a = k) \mathbb{P}_x(T_a = k) \\
	&= \mathbb{P}_x^{[a}(X_n = y) + \sum_{k = 1}^{n-1} \mathbb{P}_a(X_{n-k} = y) \mathbb{P}_x(T_a = k),
	\end{align*}
	where the second equality follows from strong Markov property. Next, multiplying by $q^n$, dividing by $\pi(y)$, which is positive by irreducibility, and summing over $n$, we obtain
	\begin{align*}
	G_q(x,y) &= G_q^{[a}(x,y) + \sum_{n=1}^{\infty} \sum_{k = 1}^{n-1} q^{n-k} \frac{\mathbb{P}_a(X_{n-k} = y)}{\pi(y)} q^k \mathbb{P}_x(T_a = k) \\
	&= G_q^{[a}(x,y) + \sum_{k=1}^{\infty} \sum_{n = k+1}^{\infty} q^{n-k} \frac{\mathbb{P}_a(X_{n-k} = y)}{\pi(y)} q^k \mathbb{P}_x(T_a = k) \\
	&= G_q^{[a}(x,y) + \E_x(q^{T_a}) G_q(a,y)
	\end{align*}
which proves  \eqref{eq:killedabv}.
	\eqref{eq:killedbel} is the dual statement of \eqref{eq:killedabv}.
\end{proof}
We are  now ready to complete the proof of Proposition \ref{prop:regular}.
First, observe that  $(X,\mP^{[\r}) \in \Sf_{\infty}$ and thus   by means of  Lemma \ref{lem:excursion} and \eqref{eq:gqyy}, one gets
\begin{align}\label{eq:gqbb2} 
	G_q(b,b) &= G_q^{[\r}(\mfo,\mfo)  H_q^{[\r}(b) \widehat{H}_q^{[\r}(b) + \dfrac{H_q(b)}{H_q(\r)} G_q(\r,b).
\end{align}
 If one chooses the same reference point $\mfo$ for $(X,\mP)$ and $(X,\mP^{[\r})$, then plainly, for any  $x \in E^{[\r}$, $H_q(x) = H_q^{[\r}(x)$,  this leads, using again \eqref{eq:gqyy}, to
\begin{align}\label{eq:rreg}
	G_q(\mfo,\mfo) \widehat{H}_q(b) = G_q^{[\r}(\mfo,\mfo) \widehat{H}_q^{[\r}(b) + \dfrac{G_q(\r,b)}{H_q(\r)}.
\end{align}
For $\r > x > b$, using the lemmas \ref{thm:hit} and \ref{lem:excursion}, we have
\begin{align*}
	\mathbb{E}_x(q^{T_b}) &= \dfrac{G_q(x,b)}{G_q(b,b)}
						  = \dfrac{G_q^{[\r}(x,b) + \E_x(q^{T_{\r}}) G_q(\r,b)}{G_q(\mfo,\mfo) H_q(b) \widehat{H}_q(b)} \\
						  &= \dfrac{G_q^{[\r}(\mfo,\mfo)(H_q^{[\r}(x) - K_q^{b]} H_q^{b]}(x)) \widehat{H}_q^{[\r}(b) + \frac{H_q(x)}{H_q(\r)} G_q(\r,b)}{G_q(\mfo,\mfo) H_q(b) \widehat{H}_q(b)} \\
						  &= \dfrac{G_q(\mfo,\mfo) H_q(x) \widehat{H}_q(b) - G_q^{[\r}(\mfo,\mfo) H_q^{b]}(x) \widehat{H}_q^{[\r}(b) }{G_q(\mfo,\mfo) H_q(b) \widehat{H}_q(b)} \\
						  &= \dfrac{H_q(x) - \bar{K}_q^{b]} H_q^{b]}(x)}{H_q(b)},
\end{align*}
where we used \eqref{eq:rreg} in the fourth equality.
For $x=\r  > b$, it follows from \eqref{eq:gqyy} that
\begin{align*}
	\mathbb{E}_{\r}(q^{T_b}) &= \dfrac{G_q(\r,b)}{G_q(b,b)}
						  = \dfrac{G_q(\r,b)}{G_q(\mfo,\mfo) H_q(b) \widehat{H}_q(b)}.
\end{align*}
To complete the proof one appeals to  Theorem 3.1 of \cite{R59} that enables the determination of $G_q(\r,b)$.

\section{Fluctuation identities}  \label{sec:fluc}

We pursue our program by exploiting the potential theoretic results of the previous Section to derive the fluctuation identities of a general skip-free Markov chain. This consists in determining the law, through the pgf,  of the first exit time of $X$ to a (in)finite interval.  These quantities are critical in many applications in various settings including insurance mathematics, biology and epidemiology to name but a few. For instance, the time $T_{0]}$ corresponds to the time of ruin for a risk process  having only negative jumps, which is a natural assumption in risk theory as they (the jumps) model the size of claims. We emphasize that our methodology enables the determination of the pgf of this time of ruin and hence allows the solution of this problem for general Markovian risk processes.   We also recall that for  a birth-death chain the pgf of exit times is given as a linear combination of the two F$q$E functions, the one for $(X,\mP)$ and its dual $(X,\widehat{\mP})$. There exists also a theoretical characterization of the laws of these variables for random walks through the celebrated Wiener-Hopf factorization.  This technique, which finds its root in  complex analysis,  has been nicely exploited by pure probabilistic arguments, see Feller \cite{Fe71} and Greenwood and Pitman \cite{GP80}. Our original approach appears to be more comprehensive for this issue in the context of general skip-free Markov chains. We have already mentioned that in order to illustrate its applicability, we will provide in the next subsection an alternative way to recover the fluctuation identities for skip-free random walks. The analogue of these results for skip-free continuous-time Markov processes on the real line can be found in \cite{Patie-Vigon} and applications to generalized Ornstein-Uhlenbeck processes and continuous branching processes with immigration are carried out in \cite{Lefevre2013} and \cite{Patie-Wev} respectively. We focus  below on the case $X \in \Sf_{\infty}$, the other case can be dealt with similarly by means of Proposition  \ref{prop:regular}.

\begin{theorem}\label{thm:minEexnad}
	Suppose that $X \in \Sf_{\infty}$. Then for any $b \in E$, $x \in E^{b]}$ and $0<q\leq 1$, we have
	\begin{equation}\label{eq:fptb}
	\mathbb{E}_x(q^{T_{b]}}) = 1 + (q-1) C_q \sum_{y \in E^{b]}}  \widehat{H}_q(y) \left(\Hqb(x) - \Hqy(x) \right) \pi(y), 
	\end{equation}
	and, for any $a>b$, $x \in E^{(b,a)^c}$,
\begin{equation}\label{eq:fptab}
	\mathbb{E}_x(q^{T_{(b,a)^c}}) = 1 + (q-1) C_q \sum_{y \in E^{(b,a)^c}}  \widehat{H}_q(y) \left( \dfrac{ \Hqy(a)}{\mathbf{H}_q^{b]}(a)} \Hqb(x) - \Hqy(x)  \right) \pi(y),
	\end{equation}
	where we recall that $C_q = G_q(\mfo,\mfo)$ as in Theorem \ref{thm:green2} and for birth-death chains $C_q$ is the inverse of the Wronskian.
\end{theorem}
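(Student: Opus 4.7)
The overall plan is to reduce both claims to computing the $q$-potentials of the appropriate killed chains and then applying a universal scalar identity. That identity, valid for any killing set $A\subset E$ and any starting point $x\in A^c$, reads
\begin{equation*}
\mathbb{E}_x\!\left[q^{T_A}\1_{\{T_A<\infty\}}\right] = 1-(1-q)\sum_{y\in A^c}G_q^A(x,y)\pi(y),
\end{equation*}
and is obtained by summing the geometric identity $\sum_{n=0}^{T_A-1}q^n = (1-q^{T_A})/(1-q)$ and invoking Fubini. With this in hand, proving \eqref{eq:fptb} is equivalent to establishing the closed form
\begin{equation*}
G_q^{b]}(x,y) = C_q\,\widehat{H}_q(y)\bigl(\Hqb(x)-\Hqy(x)\bigr),\qquad x,y\in E^{b]}.
\end{equation*}

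To derive this formula I use the first-entrance decomposition $G_q(x,y) = G_q^{b]}(x,y) + \mathbb{E}_x[q^{T_{b]}}G_q(X_{T_{b]}},y)\1_{\{T_{b]}<\infty\}}]$. Because the chain is upward skip-free, $X_{T_{b]}}\leq b<y$, so Theorem \ref{thm:green2}\eqref{it:poh} together with $\Hqy(z)=0$ for $z\leq y$ collapses $G_q(X_{T_{b]}},y)$ to $C_q\widehat{H}_q(y)H_q(X_{T_{b]}})$. The remaining object is $\varphi_b(x):=\mathbb{E}_x[q^{T_{b]}}H_q(X_{T_{b]}})\1_{\{T_{b]}<\infty\}}]$, and after substituting Theorem \ref{thm:green2}\eqref{it:poh} also for $G_q(x,y)$ the target formula is equivalent to the identity $\varphi_b(x) = H_q(x)-\Hqb(x)$ on $E^{b]}$.

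This latter identity is the crux of the proof. By Lemma \ref{lem:CW} applied with $h_q=H_q$ and $T=T_{b]}$, $\varphi_b(x) = H_q(x)\,\mathbb{P}^{H_q}_x(T_{b]}<\infty)$, so it is enough to show $\mathbb{P}^{H_q}_x(T_{b]}=\infty) = \Hqb(x)/H_q(x)$. I will extract this by passing $a\to\r$ in equation \eqref{eq:HqPb} from the proof of Lemma \ref{lem:Kqb}, which gives ${}^{H_q}\mathbb{P}^{b]}_x(T_a<\infty) = \bigl(H_q(a)/H_q^{b]}(a)\bigr)\bigl(H_q^{b]}(x)/H_q(x)\bigr)$. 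The left-hand side equals $\mathbb{P}^{H_q}_x(T_a<T_{b]})$, and since $X_n\to \r$ almost surely under $\mathbb{P}^{H_q}$ by Theorem \ref{thm:htransp}, the events $\{T_a<T_{b]}\}$ are increasing in $a$ and exhaust $\{T_{b]}=\infty\}$, so the limit is $\mathbb{P}^{H_q}_x(T_{b]}=\infty)$. The right-hand side limit is $\kappa_q^{b]}H_q^{b]}(x)/H_q(x)=\Hqb(x)/H_q(x)$ by the definition of $\kappa_q^{b]}$. The main subtlety is the monotone passage in $a$: ruling out excursions that oscillate above every level $a$ before $T_{b]}$ relies exactly on the a.s.\ convergence $X_n\to\r$ under $\mathbb{P}^{H_q}$.

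With $G_q^{b]}$ identified, \eqref{eq:fptb} follows directly from the scalar identity. For \eqref{eq:fptab} I work with $(X,\mathbb{P}^{b]})$: upward skip-freeness forces any exit of $(b,a)$ under $\mathbb{P}^{b]}$ to occur at the point $a$, so the strong Markov property at $T_a$ yields
\begin{equation*}
G_q^{(b,a)}(x,y) = G_q^{b]}(x,y) - \mathbb{E}_x\!\left[q^{T_a}\1_{\{T_a<T_{b]}\}}\right]G_q^{b]}(a,y),\qquad x,y\in(b,a).
\end{equation*}
Lemma \ref{lem:minEb} evaluates the hitting generating function as $\Hqb(x)/\Hqb(a)$, and substituting the just-derived formula for $G_q^{b]}$ collapses everything to $G_q^{(b,a)}(x,y)=C_q\widehat{H}_q(y)\bigl(\tfrac{\Hqy(a)}{\Hqb(a)}\Hqb(x)-\Hqy(x)\bigr)$. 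A final application of the scalar identity produces \eqref{eq:fptab}.
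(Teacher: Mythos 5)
Your argument is correct in substance and reaches the same killed-potential formulas as the paper's Lemma \ref{lem:green2}, but by a genuinely different route at the crucial step. The paper derives $G_q^{b]}(x,y)=C_q\widehat{H}_q(y)(\Hqb(x)-\Hqy(x))$ by invoking Hunt's switching identity $G_q^{b]}(x,y)=\widehat G_q^{b]}(y,x)$ and then applying the first-entrance decomposition to the \emph{dual} chain, which is downward skip-free and therefore enters $(\l,b]$ exactly at $b$ with no undershoot; the formula then drops out from \eqref{eq:killedbel} and \eqref{eq:bargxysum} with no extra work. You instead stay with the forward chain, perform the first-entrance decomposition at $T_{b]}$, and deal head-on with the undershoot $X_{T_{b]}}\leq b$ by the $H_q$-transform: you reduce everything to showing $\mP_x^{H_q}(T_{b]}=\infty)=\Hqb(x)/H_q(x)$, which you extract from equation \eqref{eq:HqPb} by letting $a\to\r$. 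That is a clean and instructive alternative that makes the probabilistic meaning of $\Hqb$ transparent, at the cost of an extra limiting argument that duality renders unnecessary. The derivation of \eqref{eq:fptab} from the one-point killed potential and Lemma \ref{lem:minEb}, and the scalar reduction $\E_x[q^{T_A}\1_{\{T_A<\infty\}}]=1-(1-q)\sum_{y\in A^c}G_q^A(x,y)\pi(y)$, both match the paper. One slip to correct: the events $\{T_a<T_{b]}\}$ are \emph{decreasing} in $a$ (by skip-freeness $T_a\leq T_{a'}$ for $a\leq a'$), not increasing, and they do not ``exhaust'' $\{T_{b]}=\infty\}$; what you want is that $\bigcap_a\{T_a<T_{b]}\}=\{T_{b]}=\infty\}$ holds $\mP^{H_q}_x$-a.s.\ (the inclusion $\subseteq$ is pathwise, since a finite $T_{b]}$ bounds the pre-$T_{b]}$ excursion; the reverse inclusion uses $X_n\to\r$ a.s.), so the decreasing limit of $\mP^{H_q}_x(T_a<T_{b]})$ is indeed $\mP^{H_q}_x(T_{b]}=\infty)$. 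With that correction the proof is sound.
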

\begin{rk} \label{rk:ft}
	We emphasize that the expressions   \eqref{eq:fptb} and \eqref{eq:fptab} are comprehensive and are very useful for solving the first exit times problems when applied to some specific instances. Indeed they reveal that the characterization of the (probability generating function of the) distribution  of first passage times of skip-free Markov chains   boils down to determining the positive constant $C_q$ and the functions $\widehat{H}_q$ and $\Hqb$.
In practice, it turns out that the knowledge of a transformation, such as the Laplace transform, Fourier transform  or moment generating function, of the one-step transition (and hence by integration of the $q$-potential) of the chain enables the determination of $H_q$ and $\widehat{H}_q$. The constant $C_q$ can be determined by an argument of analytical continuation applied to the transform of the $q$-resolvent. Finally, (a transformation of) the function $\Hqb$ can be obtained from the previous identifications combined with the following relations
\begin{equation*}
		\Hqy(x)=\frac{1}{C_q \: \widehat{H}_q(y)}
		\left(H_q(x) -  G_q(x,y)\right).
		\end{equation*}
that can be easily derived  from \eqref{eq:bargxysum}. This procedure will be illustrated in Section \ref{sec:sfrw} below to the class of skip-free random walks and, in the subsequent paper \cite{Patie-Wang}, to  the (continuous-time) branching Galton-Watson processes with immigration.
\end{rk}
\begin{rk}
	The analysis in Theorem \ref{thm:minEexnad} can be extended to study the joint law of $X_{T_{b]} - 1}$ and $X_{T_{b]}}$. Indeed, for $x,y \in E$ and $k \leq b$,
	\begin{align*}
	\E_x(q^{T_{b]}} \1_{\{X_{T_{b]} - 1} = y, X_{T_{b]}} = k\}}) &= \sum_{n=1}^\infty q^n \mP_x(X_{n-1} = y, X_n = k, T_{b]} = n) \\
	&= q \: G^{b]}_q(x,y)  p(y,k),
	\end{align*}
	where the last equality follows from the Markov property.
	Taking $q \rightarrow 1^-$ and using the  monotone convergence theorem, one gets
	\begin{align*}
	\mP_x(X_{T_{b]} - 1} = y, X_{T_{b]}} = k) &= G^{b]}(x,y)p(y,k).
	\end{align*}
	Summing over $y \in E$ (or $y \in E^{b]}$, since $G^{b]}(x,y) = 0$ for $y \leq b$), we have
	$$\mP_x(X_{T_{b]}} = k) = \sum_{y \in E} G^{b]}(x,y)p(y,k).$$
\end{rk}

We proceed with the proof of Theorem \ref{thm:minEexnad}.
First, let $B \subset E$ and denote $G_q^{B}$ (resp.~$\widehat{G}_q^{B}$) to be the $q$-potential of the $X$ (resp.~its dual $\widehat{X}$) killed upon entering into the set  $B$. We  recall the Hunt's switching identity for Markov chains, which can be found in \cite[page $140$]{KSK}, and says that, for any $x,y \in E\backslash B$,
\begin{equation}\label{eq:huntdual}
	G_q^B(x,y) = \widehat{G}_q^{B}(y,x) .
\end{equation}
For sake of simplicity,  we simply write $G_q^{B} = G_q^{b]}$ (resp.~$G_q^{A} = G_q^{[a}$)  if $B = (\l,b]$ (resp.~if $B = [a,\r)$). With this notation in mind, we express the $q$-potential kernels of $(X,\mP^{[a})$ and $(X,\mP^{b]})$ in terms of F$q$E functions of the three processes $(X,\mP)$, $(X,\widehat{\mP})$ and $(X,\mP^{y]})$.
\begin{lemma}\label{lem:green2}
	Suppose that $X \in \Sf_{\infty}$.
	\begin{align}
	G_q^{[a}(x,y) &= C_q \widehat{H}_q(y) \left(\dfrac{ \Hqy(a)}{H_q(a)} H_q(x) - \Hqy(x)  \right), \quad x,y \in E^{[a}, \label{eq:killedabvH} \\
	G_q^{b]}(x,y) &=  C_q\widehat{H}_q(y) \left(\Hqb(x) - \Hqy(x)  \right), \quad x,y \in E^{b]}, \label{eq:killedbelH} \\
	G_q^{(b,a)^c}(x,y) &= C_q \widehat{H}_q(y) \left( \dfrac{ \Hqy(a)}{\Hqb(a)} \Hqb(x) -  \Hqy(x) \right), \quad x,y \in E^{(b,a)^c}. \label{eq:killed2H}
	\end{align}
\end{lemma}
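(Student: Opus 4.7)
The plan is to prove the three identities in turn by combining the two excursion-decomposition identities of Lemma \ref{lem:excursion} (together with Hunt's switching identity \eqref{eq:huntdual}) with the explicit form of the unkilled $q$-potential $G_q(x,y) = C_q \widehat{H}_q(y)\bigl(H_q(x) - \Hqy(x)\bigr)$ obtained in Theorem \ref{thm:green2}\eqref{it:poh}. Each identity then reduces to a one-line algebraic cancellation.

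For \eqref{eq:killedabvH}, I would rearrange \eqref{eq:killedabv} as $G_q^{[a}(x,y) = G_q(x,y) - \E_x(q^{T_a})\,G_q(a,y)$. Since $x < a$, the upward hitting-time identity \eqref{eq:uphit} gives $\E_x(q^{T_a}) = H_q(x)/H_q(a)$, and substituting Theorem \ref{thm:green2}\eqref{it:poh} for both $G_q(x,y)$ and $G_q(a,y)$, the two $H_q(x)$ contributions cancel and the stated expression emerges.

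For \eqref{eq:killedbelH}, the trick is to pass through the dual. I first invoke Hunt's switching identity \eqref{eq:huntdual} to write $G_q^{b]}(x,y) = \widehat{G}_q^{b]}(y,x)$, then apply the dual excursion decomposition \eqref{eq:killedbel} to get
\begin{equation*}
\widehat{G}_q^{b]}(y,x) = \widehat{G}_q(y,x) - \widehat{\E}_y(q^{T_b})\,\widehat{G}_q(b,x).
\end{equation*}
From the $\pi$-duality \eqref{eq:dualdef} one has $\widehat{G}_q(y,x) = G_q(x,y)$ and $\widehat{G}_q(b,x) = G_q(x,b)$, while Lemma \ref{lem:dualminEexnar} (the dual of \eqref{eq:uphit}, legitimate since $(X,\widehat{\mP})$ is downward skip-free and $y > b$) yields $\widehat{\E}_y(q^{T_b}) = \widehat{H}_q(y)/\widehat{H}_q(b)$. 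Substituting Theorem \ref{thm:green2}\eqref{it:poh} again, the $\widehat{H}_q(b)$ factor cancels and the two copies of $H_q(x)$ annihilate each other, leaving the claimed $C_q\widehat{H}_q(y)\bigl(\Hqb(x) - \Hqy(x)\bigr)$.

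For \eqref{eq:killed2H}, the plan is to iterate. Because $(X,\mP^{b]})$ is itself an upward skip-free chain on $E^{b]}$ with $q$-potential $G_q^{b]}$, applying \eqref{eq:killedabv} to this chain with killing at $a$ gives
\begin{equation*}
G_q^{b]}(x,y) = G_q^{(b,a)^c}(x,y) + \E_x^{b]}(q^{T_a})\,G_q^{b]}(a,y),
\end{equation*}
the point being that killing first upon entering $[[\l,b]$ and subsequently at $a$ is the same as killing upon entering $(b,a)^c$ in a single step. Lemma \ref{lem:minEb} identifies $\E_x^{b]}(q^{T_a}) = \Hqb(x)/\Hqb(a)$, and substituting the already-proved \eqref{eq:killedbelH} at the pairs $(x,y)$ and $(a,y)$ produces, after the same familiar cancellation of the $\Hqb(x)$-terms, the desired formula.

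The main obstacle is purely bookkeeping: checking that each excursion identity is applied with arguments in the correct range (so that \eqref{eq:uphit}, \eqref{eq:hatHq}, and Lemma \ref{lem:minEb} can be invoked), and justifying the ``iterate the killing'' step used for \eqref{eq:killed2H}. Conceptually, every identity collapses to the same scheme — rewrite the killed potential via excursions, replace unkilled potentials by the closed form of Theorem \ref{thm:green2}\eqref{it:poh}, and replace hitting-time pgfs by ratios of F$q$E functions — so no genuinely new ingredient beyond what has already been established is needed.
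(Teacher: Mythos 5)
Your proof matches the paper's argument essentially line for line: you use \eqref{eq:killedabv} with \eqref{eq:bargxysum} and \eqref{eq:uphit} for the first identity, Hunt's switching identity \eqref{eq:huntdual} with the dual excursion formula \eqref{eq:killedbel} and Lemma \ref{lem:dualminEexnar} for the second, and iterated killing with Lemma \ref{lem:minEb} for the third, with the same algebraic cancellations in each case. The bookkeeping you flag (ranges of arguments, validity of the dual hitting-time identity) is handled just as you propose, so there is no gap.
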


\begin{proof}

To  show \eqref{eq:killedabvH}, we  use \eqref{eq:killedabv} and \eqref{eq:bargxysum} to get
\begin{align*}
	G_q^{[a}(x,y) &= G_q(x,y) - \E_x(q^{T_a}) G_q(a,y) \\
	&= C_q (H_q(x) - \Hqy(x) ) \widehat{H}_q(y) - \dfrac{H_q(x)}{H_q(a)} C_q (H_q(a) - \Hqy(a))\widehat{H}_q(y) \\
	&= C_q \widehat{H}_q(y) \left(\dfrac{H_q(x) \Hqy(a)}{H_q(a)} - \Hqy(x) \right).
\end{align*}
Next, using  the Hunt's switching identity \eqref{eq:huntdual} and \eqref{eq:killedbel}, we have
\begin{align*}
	G_q^{b]}(x,y) &= \widehat{G}_q^{b]}(y,x) = \widehat{G}_q(y,x) - \widehat{\E}_y(q^{T_b}) \widehat{G}_q(b,x) = G_q(x,y) - \dfrac{\widehat{H}_q(y)}{\widehat{H}_q(b)} G_q(x,b) \\
	&= C_q (H_q(x) - \Hqy(x)  ) \widehat{H}_q(y) - \dfrac{\widehat{H}_q(y)}{\widehat{H}_q(b)} C_q (H_q(x) - \Hqb(x) )\widehat{H}_q(b) \\
	&= C_q\widehat{H}_q(y) \left(\Hqb(x) - \Hqy(x)  \right),
\end{align*}
which proves \eqref{eq:killedbelH}. Finally, to get \eqref{eq:killed2H}, we use the identity
$G_q^{b]}(x,y) = G_q^{(b,a)^c}(x,y) + \E_x^{b]}(q^{T_a}) G_q^{b]}(a,y)$ and \eqref{eq:killedbelH} combined with Lemma \ref{lem:minEb}.
\end{proof}
With Lemma \ref{lem:green2} in mind, the proof of  Theorem \ref{thm:minEexnad} follows readily by applying the  second claim stated in the following classical results which we prove for sake of completeness.
\begin{lemma}
For any $b<x$ and $n\in \N$, we have
\begin{equation}
\mP_x( T_{b]} > n)=P_n^{b]}\1(x)=\frac{\pi \widehat{P}_n^{b]}\delta_x}{\pi(x)}
\end{equation}
and thus, for any $0<q<1$,
\begin{equation}
	\mathbb{E}_x(q^{T_{b]}})= 1 + (q-1){\bf{G}}_q^{b]}\1(x) 	= 1 + (q-1) \frac{\pi \widehat{\bf{G}}_q^{b]}\delta_x}{\pi(x)},
\end{equation}
where ${\bf{G}}_q^{b]}f(x) = \sum_{y \in E^{b]}}  f(y) G^{b]}_q(x,y)\pi(y)$.
\end{lemma}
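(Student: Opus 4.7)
The statement has two independent parts: a pathwise identification of $\mP_x(T_{b]}>n)$ with a survival probability of the killed chain (together with its $\pi$-dual rewriting), and an integration in $n$ against $q^n$ to pass to the probability generating function. My plan is to treat each using only definitions, the $\pi$-duality \eqref{eq:dualdef}, and one elementary telescoping identity for $q^N$.

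For the first equality $\mP_x(T_{b]}>n)=P_n^{b]}\1(x)$, I would observe that
$$\{T_{b]}>n\}=\{X_0,X_1,\dots,X_n\in E^{b]}\}$$
is exactly the event that the chain killed upon entering $(\l,b]$ is still alive at time $n$. Decomposing on the value of $X_n$ gives $\mP_x(T_{b]}>n)=\sum_{y\in E^{b]}}p_n^{b]}(x,y)=P_n^{b]}\1(x)$. For the dual form I would use the one-step duality $p^{b]}(x,y)\pi(x)=\widehat p^{b]}(y,x)\pi(y)$ inherited from \eqref{eq:dualdef} (killing on a set of states is self-dual in the obvious sense), iterate it to $p_n^{b]}(x,y)\pi(x)=\widehat p_n^{b]}(y,x)\pi(y)$, sum in $y\in E^{b]}$, and divide by $\pi(x)>0$.

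For the generating-function formula, I would start from the elementary identity
$$q^N=1-(1-q)\sum_{n=0}^{N-1}q^n,\qquad N\in\N\cup\{\infty\}\ \text{with}\ q^\infty=0,$$
apply it with $N=T_{b]}$, take $\mathbb{E}_x$, and swap expectation and sum by monotone convergence to get $\mathbb{E}_x(q^{T_{b]}})=1+(q-1)\sum_{n\ge 0}q^n\mP_x(T_{b]}>n)$. Plugging in the first identity of the lemma and exchanging the $(n,y)$ double sum (legitimate by nonnegativity) identifies the inner $q$-series with $\sum_{y\in E^{b]}}\pi(y)G_q^{b]}(x,y)={\bf G}_q^{b]}\1(x)$ by the very definition \eqref{eq:defpk} of the $q$-potential; using the dual version of the first identity in place of the primal one yields the form with $\pi\widehat{{\bf G}}_q^{b]}\delta_x/\pi(x)$.

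The argument is a string of routine manipulations and I do not foresee any substantial obstacle. The only point deserving a word of justification is the interchange of expectation and summation on the event $\{T_{b]}=\infty\}$, which is immediate from the bound $q^n\mP_x(T_{b]}>n)\le q^n$ valid for $q<1$; the $(n,y)$ swap is handled by Fubini--Tonelli since all terms are nonnegative.
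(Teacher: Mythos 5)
Your proof is correct. The route is essentially the one in the paper with two small, complementary differences worth noting. First, you prove the pathwise, per-$n$ identity $\mP_x(T_{b]}>n)=P_n^{b]}\1(x)=\pi\widehat P_n^{b]}\delta_x/\pi(x)$ directly and then sum against $q^n$; the paper instead establishes only the $q$-integrated identities, obtaining $\mathbf{G}_q^{b]}\1(x)=\sum_{n\ge 0}q^n\mP_x(T_{b]}>n)$ by Fubini and leaving the per-$n$ claim (which appears in the lemma's statement) implicit. Second, for the duality you iterate the one-step relation $p^{b]}(x,y)\pi(x)=\widehat p^{b]}(y,x)\pi(y)$ to get $p_n^{b]}(x,y)\pi(x)=\widehat p_n^{b]}(y,x)\pi(y)$, which upon summing in $y$ and then in $n$ recovers the dual forms; the paper invokes Hunt's switching identity $G_q^{b]}(x,y)=\widehat G_q^{b]}(y,x)$ (their \eqref{eq:huntdual}) as a black box at the level of $q$-potentials. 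Your iteration argument is, in effect, a direct proof of that switching identity, so it is a self-contained and slightly more elementary variant. Finally, your telescoping step $q^N=1-(1-q)\sum_{n=0}^{N-1}q^n$ is algebraically the same computation the paper carries out by differencing tail probabilities $\mP_x(T_{b]}>n-1)-\mP_x(T_{b]}>n)$ and reindexing; both correctly handle the possibility $T_{b]}=\infty$ via the convention $q^\infty=0$ for $q<1$. No gap.
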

\begin{proof}
First, an application of Fubini's theorem yields, that for any  $b<x$ and $0<q\leq 1$,
\begin{align*}
{\bf{G}}_q^{b]}\1(x) &=	\sum_{y \in E^{b]}}  G^{b]}_q(x,y)\pi(y) = \sum_{y \in E^{b]}} \sum_{n = 0}^{\infty} q^n \mP_x(X_n = y, n < T_{b]}) \\ &= \sum_{n = 0}^{\infty} \sum_{y \in E^{b]}} q^n \mP_x(X_n = y, n < T_{b]}) = \sum_{n = 0}^{\infty} q^n \mP_x( T_{b]} > n ).
\end{align*}
Moreover, from the Hunt's switching identity \eqref{eq:huntdual}, we observe that
\begin{align*}
{\bf{G}}_q^{b]}\1(x) &= \sum_{y \in E^{b]}}  G^{b]}_q(x,y)\pi(y) = \sum_{y \in E^{b]}} \pi(y) \widehat{G}^{b]}_q(y,x) = \frac{\pi \widehat{{\bf{G}}}_q^{b]}\delta_x}{\pi(x)}
\end{align*}
which completes the proof of the first claim. Finally, the lemma is proved after observing that, for any $x \in E^{b]}$,
\begin{align*}
	\mathbb{E}_x(q^{T_{b]}}) &= \sum_{n = 1}^\infty q^n \mP_x(T_{b]} = n) = \sum_{n = 1}^\infty q^n \left( \mP_x(T_{b]} > n-1) - \mP_x(T_{b]} > n) \right) \\
	&= q \sum_{n = 0}^{\infty} q^n \mP_x( T_{b]} > n ) - \sum_{n = 1}^\infty q^n  \mP_x(T_{b]} > n) \\
	&= 1 + (q-1) \sum_{n = 0}^{\infty} q^n \mP_x( T_{b]} > n ).
\end{align*}
\end{proof}

\begin{rk}
	Another method to study the first passage time of $b$ is by collapsing and combining all the states at or below $b$, and consider the hitting time to the glued state for the chain.
	Precisely, let $\widetilde{P}$ be the transition matrix of the glued chain with an absorbing boundary at $b$, that is,
	$$\widetilde{P} = \kbordermatrix{%
		& b & b+1 & b+2 & \ldots\\
		b & 1 & 0 & \ldots & \ldots\\
		b+1 & \sum_{j=0}^{b} p(b+1,j)  & p(b+1,b+1) & \ldots & \ldots\\
		b+2 & \sum_{j=0}^{b} p(b+2,j)  & p(b+2,b+1) & \ldots & \ldots\\
		\vdots & \vdots & \vdots & \vdots & \vdots
	}$$	
	By construction,
	$$\mP_x(T_{b]} \leq n) = \widetilde{\mP}_x(T_b \leq n).$$
	Therefore, by \eqref{eq:hitg}, using the obvious notation,
	\begin{align*}
	\mathbb{E}_x(q^{T_{b]}}) = \widetilde{\mathbb{E}}_x(q^{T_{b}}) = \dfrac{\widetilde{H_q}(x) -  \widetilde{H}_q^{b]}(x)}{\widetilde{H}_q(b)}.
	\end{align*}
\end{rk}

\subsection{Skip-free random walk on $\mathbb{Z}$ revisited}\label{sec:sfrw}
 We illustrate how the comprehensive result stated  in Theorem \ref{thm:minEexnad} can be used to obtain explicit representations for the pgf of the first exit time of a skip-free Markov chain when applied to some specific instances. Below,  we consider an upward skip-free random walk $(X,\mP)$ on the state space $\mathbb{Z}$. Note that for these chains, these problems are of course classic and explicitly treated by Feller in \cite{Fe71}. However, we provide here a new methodology to recover the well known  fluctuation identities of these chains. To emphasize the comprehensive aspect of our approach, we mention that it has also been successfully used in \cite{APW18} and \cite{Patie-Wang} to the study the potential and fluctuation theory of Galton-Watson processes with immigration which are (continuous-time) skip-free to the left Markov chains. Suppose now that $X = (X_n)_{n \in \mathbb{N}_0}$ is a upward skip-free random walk given by $\P_x(X_0 = x)=1$, $x\in \mathbb Z$, and $X_n = X_0+ \sum_{i=1}^n S_i$, where $(S_i)_{n \in \mathbb{N}_0}$ are i.i.d.~random variables with common distribution $F$ which is supported on $\{n \in \mathbb{Z}; \:  n \leq 1\}$. We write $\mathbf{F}$ for the pgf of $F$. Thus, writing $p_{s}(x)=s^{x}$, 
 we have
 \begin{equation}\label{eq:mgsmc}
 Pp_{s}(x)= \mathbf{F}(s) p_{s}(x). 
 \end{equation}
  Since $\lim_{s \to \infty } \mathbf{F}(s)=\infty$, one  also sees that the mapping $s \mapsto \mathbf{F}(s) $ is continuous, increasing and convex  on $(1/h(1),\infty)$ where $h(1)\geq 1$ is the largest root of the equation $\mathbf{F}(s)=1$. Note that $1$ is always a root and $h(1)>1$ when $\mathbf{F}'(1^+)<0$ (the right-derivative at $1$). Therefore, $\frac{1}{\mathbf{F}}$ is continuous, decreasing on $(0, h(1))$ and thus has a well-defined inverse $h : (0,1) \to (h(1),\infty)$. Recall also that, in this case, the dual, with respect to the reference measure $\pi \equiv 1$, is $(X,\widehat{\mP}) \overset{d}{=} (-X, \mP)$.

\begin{proposition}
	We take the reference point to be $\mfo = 0$ and let $0<q<1$.
	\begin{enumerate}
		\item $H_q \in \mathcal{H}_q$ and $\widehat{H}_q \in \widehat{\mathcal{H}}_q$, where for $x \in \mathbb{Z}$,
		\begin{align}\label{eq:hq}
		H_q(x) =h(q)^x, \quad \widehat{H}_q(x)  = h(q)^{-x}.
		\end{align}
		In addition,
		\begin{align}\label{eq:Cq}
		C_q = -q \dfrac{ h'(q)}{h(q)}.
		\end{align}
		
		\item For any $x \geq y+1$, we have $\Hqy(x) = h(q)^y\mathbf{H}_q^{0]}(x-y)$ where, for $s \in \mathbb{R}$ such that $\mathbf{F}(\frac{1}{s}) > \frac{1}{q}$,
		\begin{align}
		\sum_{x \in \mathbb{N}} p_{s}(x) \mathbf{H}_q^{0]}(x) = \dfrac{1}{C_q(q \mathbf{F}(1/s) - 1)}.
		\end{align}
		\item \label{it:qp} For any $x,y \in \mathbb{Z}$, we have for any $0<q<1$, $G_q(x,y)=\overline{G}_q(x-y)$ where
\[\overline{G}_q(x)=-q \dfrac{ h'(q)}{h(q)} \left(h(q)^x-\mathbf{H}_q^{0]}(x)\right).\]
		\item For any $x >0$,
		\begin{align}
		\mathbb{E}_x(q^{T_{0]}}) &= 1 + (1-q) C_q\sum_{y=1}^{x-1} \mathbf{H}_q^{0]}(y) + q h'(q)\dfrac{q-1}{h(q)-1} \mathbf{H}_q^{0]}(x).
		\end{align}
		
	\end{enumerate}
\end{proposition}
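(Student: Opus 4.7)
The plan is to apply Theorems~\ref{thm:green2} and \ref{thm:minEexnad} to the translation-invariant random walk, with the eigenfunction identity $Pp_{s}=\mathbf{F}(s)p_{s}$ of \eqref{eq:mgsmc} as the central algebraic tool. For part~(1), setting $s=h(q)$ so that $q\mathbf{F}(h(q))=1$ immediately produces an increasing $q$-harmonic function $p_{h(q)}$ with $p_{h(q)}(0)=1$ (noting that $h(q)>h(1)\geq 1$); uniqueness in Theorem~\ref{thm:green2}\eqref{it:mb}, applied with $\r=+\infty$, then identifies $H_q(x)=h(q)^x$. The dual chain is itself a random walk with pgf $s\mapsto\mathbf{F}(1/s)$, so an analogous computation for the decreasing $\widehat{P}$-excessive function attached to $\l=-\infty$ yields $\widehat{H}_q(x)=h(q)^{-x}$. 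I postpone the identification of $C_q$ until the pgf of $\mathbf{H}_q^{0]}$ is in hand.

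For part~(2), translation invariance suggests the formula $\Hqy(x)=h(q)^y\mathbf{H}_q^{0]}(x-y)$ for $x>y$. This candidate is positive, increasing and $q$-harmonic on $(y,\infty)$ (inherited from $\mathbf{H}_q^{0]}$), and its normalization matches $H_q$ because $\Hqy(x)/H_q(x)=\mathbf{H}_q^{0]}(x-y)/h(q)^{x-y}\to 1$, so uniqueness in Theorem~\ref{thm:green2}\eqref{it:phk} closes this identification. To compute $\psi(s):=\sum_{x\geq 1}s^x\mathbf{H}_q^{0]}(x)$, I would combine the $q$-harmonicity of $\mathbf{H}_q^{0]}$ on $\{1,2,\ldots\}$ with the convention $\mathbf{H}_q^{0]}\equiv 0$ on $\{\ldots,-1,0\}$, and use the upward skip-free property to check case by case that
\[
(I-qP)\mathbf{H}_q^{0]}(x)=-q\,p(0,1)\,\mathbf{H}_q^{0]}(1)\,\delta_0(x),\qquad x\in\mathbb{Z}.
\]
Indeed, on $\{x\geq 1\}$ the convention makes $qP\mathbf{H}_q^{0]}$ coincide with $qP^{0]}\mathbf{H}_q^{0]}=\mathbf{H}_q^{0]}$; at $x=0$ only the unit upward jump contributes because of skip-freeness; and on $\{x\leq -1\}$ the chain cannot reach $\{1,2,\ldots\}$ in one step. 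Multiplying by $s^x$, summing over $x\in\mathbb{Z}$ and using the identity $\sum_x s^x Pf(x)=\mathbf{F}(1/s)\sum_x s^x f(x)$ valid for a random walk, this converts into $(1-q\mathbf{F}(1/s))\psi(s)=-q\,p(0,1)\,\mathbf{H}_q^{0]}(1)$, so the announced formula for $\psi$ reduces to the identification $q\,p(0,1)\,\mathbf{H}_q^{0]}(1)=1/C_q$.

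The constant $C_q$ is then extracted from the residue of $\psi$ at its leading singularity. The normalization in Theorem~\ref{thm:green2}\eqref{it:phk} forces $\mathbf{H}_q^{0]}(x)\sim h(q)^x$, so $\psi$ converges for $|s|<1/h(q)$ and has a simple pole at $s=1/h(q)$ with residue $-1/h(q)$ (by comparison with the geometric tail $\sum_{x\geq 1}(sh(q))^x$). On the other hand, the Taylor expansion $q\mathbf{F}(1/s)-1\sim -qh(q)^2\mathbf{F}'(h(q))(s-1/h(q))$ around $s=1/h(q)$ (using $\mathbf{F}(h(q))=1/q$) shows that the residue read off from $\psi(s)=1/(C_q(q\mathbf{F}(1/s)-1))$ equals $-1/(C_q\,q\,h(q)^2\mathbf{F}'(h(q)))$; matching yields $C_q=1/(qh(q)\mathbf{F}'(h(q)))$, and combining this with the implicit-derivative identity $\mathbf{F}'(h(q))h'(q)=-1/q^2$ produces $C_q=-qh'(q)/h(q)$.

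Parts~(3) and~(4) are then direct substitutions. For~(3), plugging $H_q$, $\widehat{H}_q$ and $\Hqy$ into~\eqref{eq:bargxysum} makes the factors $h(q)^{-y}$ and $h(q)^y$ cancel, leaving $G_q(x,y)=\overline{G}_q(x-y)$ with $\overline{G}_q$ as stated. For~(4), insertion of these same expressions into the first-passage identity~\eqref{eq:fptb} with $b=0$ and $\pi\equiv 1$, splitting the summand into its two pieces, evaluating the geometric sum $\sum_{y\geq 1}h(q)^{-y}=1/(h(q)-1)$, and using $\mathbf{H}_q^{0]}(x-y)=0$ for $y\geq x$ to truncate the second sum to $\sum_{k=1}^{x-1}\mathbf{H}_q^{0]}(k)$, delivers the claimed expression. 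The main technical step throughout will be the asymptotic/residue identification of $C_q$: this is the one place where the potential-theoretic normalization at infinity has to be converted into an explicit analytic quantity through the pgf.
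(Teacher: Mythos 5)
Your proposal for parts (1), (3) and (4) coincides with the paper's argument. Part (2) and the identification of $C_q$ is where you take a genuinely different route: the paper computes $\sum_{x\in\mathbb{Z}}s^{-x}G_q(0,x)=1/(1-q\mathbf{F}(1/s))$ directly from the definition of the potential kernel, rearranges by translation invariance, and then kills the spurious pole at $s=1/h(q)$ by analytic continuation of $\sum_{x>0}s^x\E_x(q^{T_0})$; you instead note that $\mathbf{H}_q^{0]}$ extended by zero satisfies $(I-qP)\mathbf{H}_q^{0]}=-q\,p(0,1)\,\mathbf{H}_q^{0]}(1)\,\delta_0$, multiply by $s^x$ to obtain $(1-q\mathbf{F}(1/s))\psi(s)=-q\,p(0,1)\,\mathbf{H}_q^{0]}(1)$, and then use a residue argument. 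This is an attractive and equally efficient variant, and the constant computation $(1-\lambda)$-derivative identity, residues, etc.\ is correct.

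There is, however, a genuine gap in how you extract $C_q$. Your functional equation determines the number $A:=-q\,p(0,1)\,\mathbf{H}_q^{0]}(1)$, and the residue matching indeed forces $A=h(q)/(qh'(q))$. But to translate this into a formula for $C_q=G_q(\mfo,\mfo)$ you need the separate identity $A=-1/C_q$, i.e.\ $q\,p(0,1)\,\mathbf{H}_q^{0]}(1)=1/C_q$. You notice that your target formula for $\psi$ ``reduces to'' this identification, but then the residue step treats the target formula $\psi(s)=1/(C_q(q\mathbf{F}(1/s)-1))$ as already established when reading off $-1/(C_q\,q\,h(q)^2\mathbf{F}'(h(q)))$, so the argument is circular as written: the residue gives $A$, not $C_q$. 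To close the loop you should prove $q\,p(0,1)\,\mathbf{H}_q^{0]}(1)=1/C_q$ independently. This is a short computation: the resolvent identity $G_q(0,0)=1+q\sum_{z\le 1}p(0,z)G_q(z,0)$ combined with the already-established expressions $G_q(z,0)=C_q\,h(q)^z$ for $z\le 0$ and $G_q(1,0)=C_q\bigl(h(q)-\mathbf{H}_q^{0]}(1)\bigr)$ (both from \eqref{eq:bargxysum}) and $\mathbf{F}(h(q))=1/q$ collapses to $q\,C_q\,p(0,1)\,\mathbf{H}_q^{0]}(1)=1$. Once that is added, your residue computation legitimately yields $C_q=-qh'(q)/h(q)$.
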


\begin{proof}
	First, we deduce easily  from \eqref{eq:mgsmc} and the definition of $h$, that, with  $H_q(x) = h(q)^x=p_{h(q)}(x)$, $0<q<1$,
	$$q P H_q (x) = H_q(x)$$
that is $H_q \in \mathcal{H}_q$. Since $\ln h$ is nonnegative,  clearly $H_q$ is increasing. As $(X,\widehat{\mP}) \overset{d}{=} (-X, \mP)$, the first claims follow readily  from Theorem \ref{thm:minEexnad}. Next, by means of Tonelli's theorem, on the one hand for $s \in \mathbb{R}$ such that $\left|q \mathbf{F}(1/s)\right| < 1$, we have
	$$\sum_{x \in \mathbb{Z}} s^{-x} G_q(0,x) = \sum_{n \geq 0} q^n \E_0 (s^{-X_n}) = \sum_{n \geq 0} \left( q \mathbf{F}(1/s) \right)^n = \dfrac{1}{1 - q \mathbf{F}(1/s)}.$$
	On the other hand, using the translation invariant property of $X$, we have $G_q(-x,0) = G_q(0,x)$, which leads to
	$$\sum_{x \in \mathbb{Z}} s^{-x} G_q(-x,0) = \sum_{x > 0} s^x G_q(x,0) + \sum_{x \geq 0} s^{-x} G_q(-x,0) .$$
	Rearranging the terms and using \eqref{eq:bargxysum} and \eqref{eq:hq} with $\sum_{x > 0} s^x H_q(x) = - \frac{s h(q)}{s h(q)-1}$ yields
	\begin{align}\label{eq:fluct}
	\sum_{x > 0} s^x (H_q(x) - \mathbf{H}_q^{0]}(x)) = \dfrac{1}{C_q(1 - q \mathbf{F}(1/s))} - \dfrac{s h(q)}{s h(q)-1}.
	\end{align}
	As the left-hand side can be treated as $\sum_{x > 0} s^x \E_x(q^{T_0}) \leq \sum_{x > 0} s^x < \infty$ for $0 < s < 1$, it is analytical on the unit disc and by the principle of analytical continuation, one gets that
	\begin{align*}
	C_q &= \lim_{s \to \frac{1}{h(q)}} \dfrac{1 - 1/s h(q)}{1-q\mathbf{F}(1/s)} = \dfrac{-q h'(q)}{h(q)},
	\end{align*}
	which shows \eqref{eq:Cq}. Next,  following \eqref{eq:fluct} and using again $\sum_{x > 0} s^x H_q(x) = - \frac{s h(q)}{s h(q)-1}$, we get
	$$\sum_{x \in \mathbb{N}} s^x \mathbf{H}_q^{0]}(x) = \dfrac{1}{C_q \left(q \mathbf{F}(1/s) - 1\right)}.$$
	Then, note that by the translation invariance property of $X$, $G_q(x,y)=G_q(x-y,0)$ for any $x,y \in E$ which after  some easy algebra yields  $\Hqy(x) = h(q)^y\mathbf{H}_q^{0]}(x-y)$  for any $x \geq y$. The proof of item \eqref{it:qp} follows readily from the previous items and the identity \eqref{eq:bargxysum}.  
	Finally, using the first claim of Theorem \ref{thm:minEexnad}, we have
	\begin{align*}
	\mathbb{E}_x(q^{T_{0]}}) &= 1 + (q-1)C_q \sum_{y \in E^{0]}} \widehat{H}_q(y) \left(\mathbf{H}_q^{0]}(x) - \Hqy(x)  \right) \\
	&= 1 + (q-1)C_q \sum_{y \in E^{0]}} h(q)^{-y} \left( \mathbf{H}_q^{0]}(x) - h(q)^y\mathbf{H}_q^{0]}(x-y) \right) \\
	&= 1 + (1-q) C_q\sum_{y=1}^{x-1} \mathbf{H}_q^{0]}(y) + q h'(q)\dfrac{q-1}{h(q)-1}\mathbf{H}_q^{0]}(x)
	\end{align*}
which completes the proof of the Theorem.
\end{proof}

\section{Classes of first passage time distributions}\label{sec:fpt}

The aim of this part is two-fold.  First, we  provide a characterization of  the first passage times distribution, both from above and below the starting point, in which we built upon the results of Fill \cite{Fill}. This follows from the line of work by Kent and Longford \cite{KL83} who characterize the class of upward and downward hitting time of birth-death Markov chains on non-negative integers. More specifically, they define and introduce a new class $\mathcal{K}(b,\tau,M)$ of discrete infinitely divisible distributions with support on non-negative integers and pgf given by
$$\phi(z) = \exp\bigg\{b(z-1) - \tau + z(z-1) \int_{-1}^{1} (1-pz)^{-1} M(dp)\bigg\}$$
that contain both the upward and downward hitting time of such chain, where $b, \tau$ are non-negative parameters and $M$ is a finite measure on $(-1,1)$ such that
$\int_{-1}^{1} (1+p)^{-1} M(dp) < \infty.$
In particular, using the interlacing property of the eigenvalues for birth-death process, Kent and Longford \cite{KL83} show that the measure $M$ that corresponds to the hitting times are non-negative. However, this interlacing property is lost in general when we move to the realm of skip-free chains. This motivates us to define a more general class of distribution that we call $\mathbb{G}_p$ in Definition \ref{def:classdist} which contains both upward and downward hitting times in Theorem \ref{thm:hitting}. This will be further demonstrated in Remark \ref{rk:KL83} below. Second, we derive an explicit representation of the F$q$E functions associated to $(X,\mP)$ and $(X,\mP^{b]})$ in Section \ref{subsec:fqe}. As we will elaborate in Section \ref{subsec:id}, this allows us to investigate the infinite divisibility of the upward hitting time for skip-free Markov chains and characterizes the associated $R$-function by means of the eigenvalues. In this vein we emphasize that Feller \cite{F66} studied the infinite divisibility of the hitting times of birth-death random walk and Viskov \cite{Viskov00} extended his work to skip-free random walk using the classical Lagrange inversion formula. We focus on
 $\Sf_\l$ the subclass of $\Sf$ that has a finite left boundary point, i.e.~$\l<\infty$.  We shall also need the following definition where we use the notation, for $d\in \mathbb{N}$, $\mathbb{D}^d=\{\boldsymbol{\lambda} = (\lambda_k)_{k=1}^d \in \C^d;\: |\lambda_k| \in [0,1], \:k=1,\ldots,d \textrm{ and } \lambda_i \neq \lambda_k, i \neq k\}$.
\begin{definition}\label{def:classdist}
	Let $p,d\in \N$ with $d \leq p$, $\boldsymbol{\lambda} = (\lambda_k) \in \mathbb{D}^d$ and  $\boldsymbol{m} = (m_k) \in \N^d$ with $\sum_{k=1}^d m_k = p$, and for each $k \in [1,d]$, the multiplicity of $\lambda_k$ is $m_k$, $\mathbf{c}_k = (c_{k,i})_{i=1}^{m_k} \in \C^{m_k}$ and we write $\mathbf{c} = (\mathbf{c}_{k,i})$. We say that a non-negative discrete random variable $X \in \mathbb{G}_p(\boldsymbol{c};\boldsymbol{\lambda};\boldsymbol{m})$ 
if its probability mass function can be written, for any $n\in \N_0$, as
	\begin{equation}
	0\leq \mP(X=n) = \sum_{k=1}^d \sum_{i=1}^{m_k} c_{k,i} \binom{n+i-1}{n} \lambda_k^{n} (1-\lambda_k)^{i}\leq 1,
	\end{equation}
	where
	$\sum_{k=1}^d \sum_{i=1}^{m_k} c_{k,i} \leq 1.$
	In particular, if $\boldsymbol{\theta}=\boldsymbol{\lambda} \in [0,1]^d$, then we write $\mathcal{G}_p(\boldsymbol{c};\boldsymbol{\theta};\boldsymbol{m})=\mathbb{G}_p(\boldsymbol{c};\boldsymbol{\theta};\boldsymbol{m})$. Note that it can be interpreted as a (signed) mixture of geometric random variables when $m_k = 1$ and $\lambda_k \in [0,1]$ for all $k$.	
\end{definition}

Before stating the main result of this part, we introduce the notation   $\boldsymbol{\lambda}^{[x,y]} = (\lambda_i^{[x,y]})_{i=0}^{y-x}$ for the (non-unit) eigenvalues of the transition matrix $P$ restricted to $[x,y]$ with $x \leq y$ and we use $\mathcal{ID}$ to denote the class of infinitely divisible laws, see Section \ref{subsec:id} for formal definition and further discussion on related topics. In Fill \cite{Fill}, under the condition that the transition matrix of an upward skip-free chain has only real and non-negative eigenvalues, it is shown that its upward first hitting time is a convolution of geometric distributions. We also refer to Miclo \cite{Miclo} for similar results in the context of reversible Markov chains.
\begin{theorem}\label{thm:hitting}
	Suppose that $(X,\mP)$ is irreducible. For any $\l \leq b \leq x \leq a \leq \r$, we have, under $\mP_x$,
	\[ T_a - (a - x) \in \mathbb{G}_{a}(\mathbf{c}(x,a);\boldsymbol{\lambda};\boldsymbol{m})\cap \mathcal{ID} \quad \textrm{and} \quad T_b \:(\textrm{resp.~}T_{b]})  \in \mathbb{G}_{\r-1}(\boldsymbol{\tilde{c}}(x,b);\boldsymbol{\tilde{\lambda}};\boldsymbol{\tilde{m}}), \]
	where $\boldsymbol{\lambda}$ are the distinct eigenvalues of $\boldsymbol{\lambda}^{[0,a-1]}$, $\boldsymbol{\tilde{\lambda}}$ are the distinct eigenvalues of $\boldsymbol{\lambda}^{[0,b-1]}$ and $\boldsymbol{\lambda}^{[b+1,\r-1]}$,  $\boldsymbol{m}$ (resp.~$\boldsymbol{\tilde{m}}$) are the multiplicities of $\boldsymbol{\lambda}$ (resp.~$\boldsymbol{\tilde{\lambda}}$). In particular, when $\boldsymbol{\lambda}, \boldsymbol{\tilde{\lambda}}$ are all real and non-negative, then, under $\mP_x$,
		\[ T_a - (a - x) \in \mathcal{G}_{a}(\mathbf{c}(x,a);\boldsymbol{\lambda};\boldsymbol{m})\cap \mathcal{ID} \quad \textrm{and} \quad T_b \:(\textrm{resp.~}T_{b]})  \in \mathcal{G}_{\r-1}(\boldsymbol{\tilde{c}}(x,b);\boldsymbol{\tilde{\lambda}};\boldsymbol{\tilde{m}}). \]
\end{theorem}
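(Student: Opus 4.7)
The plan is to derive the pgf of each hitting time as an explicit rational function of $q$ and read off the claimed class structure by partial fraction decomposition, treating the upward and downward directions with different tools.

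For $T_a$ with $x\leq a$, I would combine the strong Markov property with the upward skip-free property to write $T_a-(a-x)=\sum_{k=x}^{a-1}(\sigma_k-1)$, where under $\mP_k$ the independent increments $\sigma_k$ have the law of $T_{k+1}$. A last-step argument gives $\E_k(q^{T_{k+1}})=q\,p(k,k+1)(I-qP_k)^{-1}_{k,k}$ with $P_k=P|_{[\l,k]}$; Cramer's rule together with the identity $\operatorname{cof}(I-qP_k)_{k,k}=\det(I-qP_{k-1})$ and telescoping the product over $k$ then yield
\begin{equation*}
\E_x(q^{T_a-(a-x)}) \;=\; C_{x,a}\,\frac{\det(I-qP_{x-1})}{\det(I-qP_{a-1})},\qquad C_{x,a}:=\prod_{k=x}^{a-1}p(k,k+1).
\end{equation*}
The denominator is a polynomial of degree $a$ whose reciprocal roots are exactly $\boldsymbol{\lambda}^{[0,a-1]}$ with multiplicities $\boldsymbol{m}$, so a partial fraction expansion expresses the pgf as $\sum_k\sum_{i=1}^{m_k}c_{k,i}(x,a)(1-\lambda_k)^i/(1-q\lambda_k)^i$, which as a power series in $q$ is exactly the pgf of a law in $\mathbb{G}_a(\mathbf{c}(x,a);\boldsymbol{\lambda};\boldsymbol{m})$ of Definition~\ref{def:classdist}. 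For infinite divisibility, taking logs and using $\log\det(I-qM)=-\sum_{n\geq 1}q^n\operatorname{tr}(M^n)/n$ produces
\begin{equation*}
\log\E_x(q^{T_a-(a-x)}) \;=\; \log C_{x,a} + \sum_{n\geq 1}\frac{q^n}{n}\bigl(\operatorname{tr}(P_{a-1}^n)-\operatorname{tr}(P_{x-1}^n)\bigr).
\end{equation*}
The trace of the $n$-th power of a non-negative matrix equals the weighted count of closed walks of length $n$, and $P_{x-1}$ is a principal submatrix of the non-negative matrix $P_{a-1}$, so $\operatorname{tr}(P_{a-1}^n)\geq \operatorname{tr}(P_{x-1}^n)$ for every $n\geq 1$. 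Thus the log-pgf has non-negative coefficients at every positive power of $q$, which is the compound-Poisson characterization of $\mathcal{ID}$.

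For $T_b$ with $x>b$, I would start from $\E_x(q^{T_b})=G_q(x,b)/G_q(b,b)$ (Lemma~\ref{thm:hit}) and the finite-state representation $G_q(x,y)=(I-qP)^{-1}_{x,y}/\pi(y)$, then apply Cramer's rule so that the pgf becomes a ratio of minors of $I-qP$ with $\det(I-qP)$ cancelling. The key structural step is that the skip-free property forces the minor $M^{b,b}=I-qP^{(b)(b)}$ to be block lower-triangular with diagonal blocks $I-qP|_A$, $I-qP|_B$ where $A=[\l,b-1]$ and $B=[b+1,\r]$: the only upward one-step transition out of $A$ would be $b-1\mapsto b$, which has been removed. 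Hence $\det(M^{b,b})=\det(I-qP|_A)\det(I-qP|_B)$, whose reciprocal roots are the eigenvalues in $\boldsymbol{\lambda}^{[0,b-1]}\cup\boldsymbol{\lambda}^{[b+1,\r]}$. When $\r$ is an absorbing boundary, $\det(I-qP|_B)=(1-q)\det(I-qP|_{[b+1,\r-1]})$, and the $(1-q)$ factor cancels the corresponding factor of the numerator coming from $\E_x(q^{T_b})|_{q=1}\leq 1$ without a pole, giving the range $\boldsymbol{\lambda}^{[b+1,\r-1]}$ in the statement. A partial fraction expansion then produces the $\mathbb{G}_{\r-1}(\boldsymbol{\tilde c}(x,b);\boldsymbol{\tilde\lambda};\boldsymbol{\tilde m})$ structure, and the same argument applied to the glued chain of the remark following Theorem~\ref{thm:minEexnad} handles $T_{b]}$. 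The ``in particular'' statement is then immediate: when all the $\lambda_k,\tilde\lambda_k$ lie in $[0,1]$, each partial fraction $(1-\lambda_k)^i/(1-q\lambda_k)^i$ is the pgf of a genuine negative binomial, so the signed-mixture class $\mathbb{G}_p$ collapses onto its positive subclass $\mathcal{G}_p$.

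The main obstacle is the downward case: unlike in the upward case, the chain may jump over several levels in a single step, which rules out any clean independent-increment decomposition, so the pole structure of the pgf must be extracted purely from the Cramer representation, and careful track must be kept of the boundary contribution at $\r$ in order to recover the precise range $[b+1,\r-1]$. Moreover the possible presence of complex or repeated eigenvalues forces the weaker class $\mathbb{G}$ and explains why infinite divisibility is claimed only in the upward direction, where the trace-comparison for nested principal submatrices directly yields the compound-Poisson structure.
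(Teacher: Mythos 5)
Your proposal is correct, but it reaches the result by a genuinely different, more linear-algebraic route than the paper. For the upward time, you telescope the independent-increment decomposition and use Cramer's rule to obtain $\E_x(q^{T_a-(a-x)})=C_{x,a}\det(I-qP_{x-1})/\det(I-qP_{a-1})$; the paper instead derives exactly this product representation from Fill's link/intertwining of $P$ with a pure-birth chain (Proposition~\ref{prop:sfmc}), and then invokes Proposition~\ref{prop:fptchar}. For infinite divisibility, you expand $\log\det(I-qM)=-\sum_n q^n\operatorname{tr}(M^n)/n$ and use monotonicity of traces of powers of nested non-negative principal submatrices; the paper simply cites \cite[Chapter VII Theorem 2.1]{SVH04}, although it does recover the identity $r_k^{0\to a}=\operatorname{Tr}((P^{[0,a]})^{k+1})$ and the inequality in Corollary~\ref{cor:Rfunctxa} \emph{as consequences} of the proven theorem, so your trace argument is the natural self-contained version of that observation. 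For the downward time, you extract the pole structure directly from the cofactor $\det(M^{b,b})$ of $I-qP$ by observing that the skip-free property makes it block lower-triangular, so it factors as $\det(I-qP|_{[\l,b-1]})\det(I-qP|_{[b+1,\r]})$, with the boundary factor $(1-q)$ cancelling; the paper instead plugs the explicit F$q$E expressions from Proposition~\ref{prop:sfmc} into the fluctuation identity of Lemma~\ref{lem: below}. Both routes land on the same partial-fraction form \eqref{eq:fptchar2} and then Proposition~\ref{prop:fptchar} finishes. The main thing your route buys is that it bypasses the spectral-polynomial/link machinery and the potential-theoretic lemmas entirely, at the cost of having to track the boundary contribution at $\r$ by hand; the paper's route gives more structure (and explicit normalizing constants via $\mP_{b+1}(T_x<T_{b]})$) but depends on the earlier apparatus. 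One small caveat in your presentation: the factorization $\det(I-qP|_{[b+1,\r]})=(1-q)\det(I-qP|_{[b+1,\r-1]})$ requires $\r$ to be absorbing, which is the implicit boundary convention under which the theorem's range $[b+1,\r-1]$ is stated (cf.\ the use of $\E_x(q^{T_b}\1_{\{T_b<T_\r\}})$ in the paper's proof), so you should state that hypothesis explicitly rather than leaving it as a side remark.
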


\begin{rk}
	Note that for class $\mathcal{S}$ to be introduced in Section \ref{sec:st}, the passage time distributions are in the particular cases of Theorem \ref{thm:hitting}.
\end{rk}

\begin{rk}
	Complex $c_{k,i}$ can occur as a result of partial fraction, and therefore it is necessary for us to consider such cases in Definition \ref{def:classdist}. For example, if the $\alpha_j'$ s in \eqref{eq:fptchar2} are complex (here $\alpha_j$ is usually $\lambda_j^{[0,x-1]}$, so this will occur when $P$ restricted to $[0,x-1]$ has complex eigenvalues), then $c_{k,i}$ are complex in general.
\end{rk}


\subsection{Characterizations of the class $\mathbb{G}_p$}

We first provide a characterization of the class $\mathbb{G}_p$, which will simplify our proof of Theorem \ref{thm:hitting}.

\begin{proposition}\label{prop:fptchar}
	$X \in \mathbb{G}_p(\boldsymbol{c};\boldsymbol{\lambda};\boldsymbol{m})$ if and only if its probability generating function can be written as
	\begin{equation}\label{eq:fptchar}
	\E[q^X] = \sum_{k=1}^{d} \sum_{i=1}^{m_k} \dfrac{C_{k,i}}{(1 - \lambda_k q)^{i}},
	\end{equation}
	where $(C_{k,i})$ depends on $\boldsymbol{c},\boldsymbol{\lambda}$.
	Moreover, if the probability generating function of $X$ can be written as
	\begin{equation}\label{eq:fptchar2}
	\E[q^X] =
	\prod_{j=0}^{b-1} \frac{(1 - \beta_j) q}{1 - \beta_j q}  \prod_{j=0}^{x-1} \frac{1 - \alpha_j q}{(1 - \alpha_j) q}, \end{equation}
	where $\boldsymbol{\beta} = (\beta_j) \in \mathbb{C}^b$,  $\boldsymbol{\alpha} = (\alpha_j) \in \mathbb{C}^x$ with $|\lambda_j|, |\alpha_j| \in [0,1]$ for all $j$, $\boldsymbol{\lambda}$ are the distinct elements of $\boldsymbol{\beta}$ and $\boldsymbol{m}$ are the multiplicities, then $X \in \mathbb{G}_b(\boldsymbol{c};\boldsymbol{\lambda};\boldsymbol{m})$.
\end{proposition}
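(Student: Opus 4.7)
My plan is to handle the two assertions in turn, and both reduce to the negative-binomial generating function identity combined with partial fraction decomposition.

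For the equivalence in the first assertion, the workhorse is the standard identity
\[
\sum_{n \geq 0} \binom{n+i-1}{n} \lambda^{n} q^{n} = \frac{1}{(1-\lambda q)^{i}}, \qquad |\lambda q|<1,
\]
which is just the generating function of the negative-binomial family. Starting from the pmf in Definition \ref{def:classdist} and interchanging the finite double sum over $(k,i)$ with the sum over $n$ (justified by absolute convergence on the open unit disc) delivers the pgf representation \eqref{eq:fptchar} with the explicit correspondence $C_{k,i} = c_{k,i}(1-\lambda_{k})^{i}$. Conversely, given \eqref{eq:fptchar}, I would expand each summand via the same identity and read off the coefficient of $q^{n}$; setting $c_{k,i} = C_{k,i}/(1-\lambda_{k})^{i}$ (the marginal case $\lambda_{k}=1$ contributes trivially because $(1-\lambda_k)^i$ vanishes) recovers the $\mathbb{G}_{p}$ representation. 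Thus the two descriptions carry the same information.

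For the second assertion, the strategy is to reduce \eqref{eq:fptchar2} to the form \eqref{eq:fptchar} by partial fractions and then invoke the first part. Rearranging factors I obtain
\[
\E[q^{X}] = q^{b-x}\, K \cdot \frac{\prod_{j=0}^{x-1}(1-\alpha_{j}q)}{\prod_{j=0}^{b-1}(1-\beta_{j}q)}, \qquad K = \frac{\prod_{j=0}^{b-1}(1-\beta_{j})}{\prod_{j=0}^{x-1}(1-\alpha_{j})}.
\]
The rational factor has poles precisely at the reciprocals of the distinct $\lambda_{k}$ with multiplicities $m_{k}$, so ordinary partial fraction decomposition expresses it as $\sum_{k,i} A_{k,i}(1-\lambda_{k}q)^{-i}$. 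To absorb the monomial $q^{b-x}$ I would apply iteratively the elementary reduction
\[
\frac{q}{(1-\lambda q)^{i}} = \frac{1}{\lambda}\left(\frac{1}{(1-\lambda q)^{i}} - \frac{1}{(1-\lambda q)^{i-1}}\right), \qquad \lambda \neq 0,
\]
which lowers the negative-binomial exponent by one and preserves membership in the family $\{(1-\lambda_{k}q)^{-\ell}\}_{\ell \geq 1}$. Applying this $b-x$ times to each summand yields the representation \eqref{eq:fptchar} with new coefficients $C_{k,i}$, and the first part of the proposition then identifies $X$ with an element of $\mathbb{G}_{b}(\boldsymbol{c};\boldsymbol{\lambda};\boldsymbol{m})$ after setting $c_{k,i} = C_{k,i}/(1-\lambda_{k})^{i}$.

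The main bookkeeping obstacle is the iterated reduction of $q^{b-x}/(1-\lambda_{k}q)^{i}$: when the exponent of $q$ exceeds the pole multiplicity $m_{k}$ the reduction leaves polynomial residues, which must be handled via poles at $\lambda=0$ (implicitly available in the parametrization because a constant equals $1/(1-0 \cdot q)^{1}$). Organizing the algebra cleanly and tracking the resulting linear combinations is the only delicate step; the underlying machinery is just the negative-binomial generating function together with one elementary identity, both applied in a bounded finite-dimensional setting.
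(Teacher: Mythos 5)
Your proposal is correct and follows essentially the same route as the paper: the forward direction of the first equivalence is the negative-binomial generating-function identity with $C_{k,i}=c_{k,i}(1-\lambda_k)^i$; the reverse is coefficient extraction (the paper phrases it as differentiating the pgf $n$ times and dividing by $n!$, which is the same thing as your power-series expansion); and the second assertion is a partial-fraction decomposition.

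The one place you add machinery that the paper leaves implicit is the treatment of the monomial $q^{b-x}$ via the iterated reduction $\frac{q}{(1-\lambda q)^{i}}=\frac{1}{\lambda}\bigl(\frac{1}{(1-\lambda q)^{i}}-\frac{1}{(1-\lambda q)^{i-1}}\bigr)$. This identity is correct, but it is not really needed and your own caveat about ``polynomial residues'' and ``poles at $\lambda=0$'' is where the extra bookkeeping threatens to mislead: a pure power $q^{m}$ with $m\ge 1$ is \emph{not} of the form $(1-\lambda_{k}q)^{-i}$ for any admissible $\lambda_{k}$ (taking $\lambda_{k}=0$ only produces a constant, i.e.\ a Dirac mass at $0$). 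The cleaner observation is that in the combined rational function
\[
q^{\,b-x}\,\frac{\prod_{j=0}^{x-1}(1-\alpha_{j}q)}{\prod_{j=0}^{b-1}(1-\beta_{j}q)}
\]
the numerator and denominator both have degree $b$ (when the $\beta_{j}$ are nonzero), so the standard partial-fraction algorithm already returns a constant plus a sum of terms $A_{k,i}(1-\lambda_{k}q)^{-i}$ with no higher polynomial part, and the constant is captured by $\lambda=0$, $i=1$. That is exactly what the paper silently invokes when it says ``by means of partial fraction.'' So your argument is valid but can be streamlined by applying partial fractions to the full rational expression at once, rather than term by term after first decomposing the pole part and then reintroducing $q^{b-x}$.
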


\begin{rk}\label{rk:KL83}
	We remark that the class $\mathbb{G}_p$ is more general than the class $\mathcal{K}(b,\tau,M)$ studied by Kent and Longford \cite{KL83}. Indeed, using the expression in \cite[Section $10$]{KL83}, we see that \eqref{eq:fptchar2} can be rewritten as
	\begin{align*}
		q^{x-b} \E [q^X] &= \exp \bigg\{ (q-1) \left(\sum_{j=0}^{b-1} |\log(1-\beta_j)| - \sum_{j=0}^{x-1} |\log(1-\alpha_j)|\right) + q(q-1)\int_{-1}^{1} (1-pz)^{-1} M(dp)\bigg\}, \end{align*}
where	$M(dp) = \left(\sum_{j=0}^{b-1} \1_{(0,\beta_j)} - \sum_{j=0}^{x-1} \1_{(0,\alpha_j)} \right)|p|(1-p)dp$.	
	In the particular case of birth-death chains, we can see that the measure $M$ that corresponds to $(T_{a+1},\P_{a})$ is,  by the reality and the interlacing property of eigenvalues, non-negative. In general, the measure $M$ is a signed measure.
\end{rk}

\begin{proof}
	We first show that if $X \in \mathbb{G}_p(\boldsymbol{c};\boldsymbol{\lambda};\boldsymbol{m})$ then \eqref{eq:fptchar} holds. By writing $C_{k,i} = c_{k,i} (1-\lambda_k)^{i}$, we see that
	\begin{align*}
		\E[q^X] &= \sum_{k=1}^{d} \sum_{i=1}^{m_k} C_{k,i} \sum_{n \geq 0} \binom{n+i-1}{n} (q\lambda_k)^{n} = \sum_{k=1}^{d} \sum_{i=1}^{m_k} \dfrac{C_{k,i}}{(1 - \lambda_k q)^{i}}.
	\end{align*}
	The opposite direction can be shown by differentiating the pgf $n$ times followed by dividing $n!$. Next, we show that pgf of the form \eqref{eq:fptchar2} belongs to the class $\mathbb{G}_p$. By means of partial fraction, we note that \eqref{eq:fptchar2} can be written as
	\begin{align*}
	\E[q^X] &= \sum_{k=0}^{d} \sum_{i=1}^{m_k} \dfrac{C_{k,i}}{(1 - \beta_k q)^{i}},
	\end{align*}
	so by \eqref{eq:fptchar}, $X \in \mathbb{G}_b(\boldsymbol{c};\boldsymbol{\lambda};\boldsymbol{m})$.
\end{proof}

\subsection{F$q$E functions of $(X,\mP)$ and $(X,\mP^{b]})$ for skip-free processes}\label{subsec:fqe}
In this section, we give explicit formulas on the F$q$E functions of $(X,\mP)$ and $(X,\mP^{b]})$. Note that the former case follows from Fill \cite{Fill}, whereas the latter  one is a slight generalization of the first one by  allowing  substochastic transition matrices.

\begin{proposition}\label{prop:sfmc}
	Suppose that $(X,\mP)$ is an irreducible upward skip-free Markov chain on $E$ with $|\l| < \infty$. For $b \in E$, we consider the killed process $(X,\mP^{b]})$ on $E^{b]} = [b+1,\r)$. Let $\lambda_{i} = \lambda_i^{[b+1,x-1]}$ be the eigenvalues of $P$ when restricted to $[b+1,x-1]$. If we take $b+1$ to be the reference point (i.e.~$\mfo^{b]} = b+1$), then the F$q$E function of $(X,\mP^{b]})$ is
	\begin{equation*}
	H_q^{b]}(x) = \begin{cases}
	1, &\text{if $x = b+1,$}\\
	\displaystyle \frac{1}{\mP_{b+1}(T_x < T_{b]})} \prod_{i=0}^{x-b-2} \frac{1 - \lambda_{i} q}{(1 - \lambda_{i}) q}, &\text{if $x \geq b+2,$}
	\end{cases}
	\end{equation*}
	where $\mP_{b+1}(T_x < T_{b]}) = \dfrac{p(b+1,b+2)\ldots p(x-1,x)}{(1-\lambda_{0})\ldots(1-\lambda_{x-b-2})}$.
\end{proposition}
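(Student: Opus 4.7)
The plan is to use Lemma \ref{lem:minEb} to reduce the identification of $H_q^{b]}$ to the explicit computation of the probability generating function $\mathbb{E}_{b+1}^{b]}(q^{T_x})$, since with $\mathfrak{o}^{b]}=b+1$ one has $H_q^{b]}(x)=1/\mathbb{E}_{b+1}^{b]}(q^{T_x})$ for $x\geq b+2$. The identity for $\mathbb{P}_{b+1}(T_x<T_{b]})$ will then follow from the same computation by specializing to $q=1$.

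First, I would exploit the upward skip-free property together with the strong Markov property at the successive upward passage times to factorize
$$\mathbb{E}_{b+1}^{b]}(q^{T_x}) = \prod_{y=b+1}^{x-1} \mathbb{E}_y^{b]}(q^{T_{y+1}}),$$
using the nesting $\{T_{y+1}<T_{b]}\}\supseteq\{T_x<T_{b]}\}$ implied by the fact that the chain cannot skip over $y+1$. Next, writing $\tilde{P}^{[b+1,y]}$ for the restriction of $P$ to the block $[b+1,y]$ and observing that $X_{T_{y+1}-1}=y$ on $\{T_{y+1}<T_{b]}\}$ by upward skip-freeness again, one obtains
$$\mathbb{E}_y^{b]}(q^{T_{y+1}}) = q\, p(y,y+1)\, \tilde{G}_q^{[b+1,y]}(y,y),$$
where $\tilde{G}_q^{[b+1,y]}=(I-q\tilde{P}^{[b+1,y]})^{-1}$ is the resolvent of the substochastic block on $[b+1,y]$.

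Cramer's rule then gives, since deleting the last row and column of $I-q\tilde{P}^{[b+1,y]}$ produces $I-q\tilde{P}^{[b+1,y-1]}$, that
$$\tilde{G}_q^{[b+1,y]}(y,y) = \frac{\det(I-q\tilde{P}^{[b+1,y-1]})}{\det(I-q\tilde{P}^{[b+1,y]})} = \frac{\prod_{i=0}^{y-b-2}\bigl(1-q\lambda_i^{[b+1,y-1]}\bigr)}{\prod_{i=0}^{y-b-1}\bigl(1-q\lambda_i^{[b+1,y]}\bigr)}.$$
Plugging this into the factorization, the product over $y$ telescopes: the numerator at step $y$ cancels the denominator at step $y-1$, leaving only the trivial factor $\det(I-q\tilde{P}^{[b+1,b]})=1$ on top and the full denominator $\prod_{i=0}^{x-b-2}(1-q\lambda_i)$ on the bottom. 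The resulting identity
$$\mathbb{E}_{b+1}^{b]}(q^{T_x}) = \frac{q^{x-b-1}\, p(b+1,b+2)\cdots p(x-1,x)}{\prod_{i=0}^{x-b-2}(1-q\lambda_i)}$$
rearranges into the claimed form for $H_q^{b]}(x)$, and setting $q=1$ recovers the stated expression for $\mathbb{P}_{b+1}(T_x<T_{b]})$. The only delicate point is the factorization in the second paragraph: one must verify that on $\{T_x<T_{b]}\}$ every $T_{y+1}$ with $b+1\leq y\leq x-1$ is automatically finite (which holds since upward skip-freeness forces $T_{y+1}\leq T_x$), so that the chain of strong Markov applications introduces no boundary issue; everything else is linear algebra.
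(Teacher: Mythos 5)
Your proof is correct, and it takes a genuinely different route from the paper. The paper's proof reduces to the machinery of Fill \cite{Fill}: it constructs spectral polynomials $Q_k$, a lower-triangular link matrix $\Lambda(i,j)=Q_i(0,j)$, and a pure-birth chain $\widetilde{P}$ with the same (non-unit) eigenvalues, establishes the intertwining $\Lambda P = \widetilde{P}\Lambda$, and reads off the hitting-time pgf from the intertwining together with the stochastic-monotonicity properties of $\Lambda$. By contrast, you bypass the intertwining entirely: you factorize $\mathbb{E}_{b+1}^{b]}(q^{T_x})$ into a product of one-level passage pgfs via the upward skip-free and strong Markov structure, express each factor through the diagonal resolvent entry $\tilde{G}_q^{[b+1,y]}(y,y)$ of the truncated block, and use Cramer's rule to turn the telescoping product of determinant ratios into the claimed eigenvalue product. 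The payoff of your version is that it is elementary and self-contained — it needs no auxiliary chain, no link operator, and no prior result from Fill — and it makes transparent why the characteristic polynomial of $P$ restricted to $[b+1,x-1]$ appears. What it does \emph{not} give you, and what the paper's approach does, is the actual intertwining $\Lambda P=\widetilde{P}\Lambda$ with a pure-birth chain, which is reused elsewhere (e.g.~to identify the fastest strong stationary time and for the cutoff analysis). So your argument is a cleaner proof of this one proposition, while the paper's trades elementariness for a structural by-product it needs later. Two small points worth making explicit in a write-up: that the deleted-row-and-column submatrix of $I-q\tilde{P}^{[b+1,y]}$ really is $I-q\tilde{P}^{[b+1,y-1]}$ because restriction to an interval is consistent under further restriction, and that the base case of the telescoping is the empty product $\det(I-q\tilde{P}^{[b+1,b]})=1$; you have both, but they are the only places an error could hide.
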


\begin{proof}
 Suppose that $X$ is an upward skip-free Markov chain on $[-1,a]$, where $a$ and $-1$ are absorbing states. Let $P$  be the substochastic transition matrix on $[0,a]$, with $(\lambda_i)_{i=0}^{a-1}$ to be the (non-unit) eigenvalues, where $\lambda_i = \lambda_i^{[0,a-1]}$ for  $i = 0,\ldots,a-1$. We define $\widetilde{P} = (\widetilde{p}(i,j))$ for $i,j = 0,1,\ldots,a$
$$\widetilde{p}(i,j) = \begin{cases} \lambda_i, &\quad j = i, \\
1 - \lambda_i , &\quad j = i + 1, \\
0 , &\quad\mbox{otherwise,} \end{cases} $$
where $\lambda_a = 1$. Next, the so-called spectral polynomial is defined to be
$Q_0 = I$ and
$$Q_k = \dfrac{(P - \lambda_0 I)\ldots(P - \lambda_{k-1}I)}{(1-\lambda_0) \ldots (1-\lambda_{k-1})},\quad k = 1,\ldots,a.$$
Define the link matrix $\Lambda = (\Lambda(i,j))$ to be
$$\Lambda(i,j) = Q_i(0,j),\quad i,j = 0,\ldots,a.$$
Then $\Lambda$ satisfies the following properties:
\begin{enumerate}[(i)]
	\item \label{it:l1} The sum of each row of $\Lambda$ is less than or equal to $1$.
	\item \label{it:l2} $\Lambda$ is lower-triangular with $\Lambda(0,0) = 1$ and
	$$\Lambda(k,k) = \dfrac{p(0,1)\ldots p(k-1,k)}{(1-\lambda_0)\ldots(1-\lambda_{k-1})} \neq 0, \quad k =1,\ldots,a,$$
	and hence $\Lambda$ is invertible.
	\item \label{it:l3} $\Lambda P = \widetilde{P} \Lambda$.
\end{enumerate}
The proof of $\eqref{it:l1},\eqref{it:l2}$ and $\eqref{it:l3}$ above are identical to that of Lemma $2.1$ in \cite{Fill}, except that the sum of each row of $\Lambda$ is now less than or equal to $1$, since each row of $(1-\lambda_i)^{-1}(P - \lambda_i I)$ has that property.
The next two results \eqref{eq:tsupht2} and \eqref{eq:tsupht} are analogous to \cite[Lemma $2.3$ and Theorem $1.2$]{Fill}, and we omit the proofs here as they are almost identical, except now we have $\mP_0(T_a < T_{-1}) = \Lambda(a,a)$.
For $t \in \mathbb{N}_0$,
\begin{equation}\label{eq:tsupht2}
	\mP_0(T_a \leq t) = \mP_0(T_a < T_{-1}) \widetilde{P^t}(0,a),
\end{equation}
where
$$\mP_0(T_a < T_{-1}) = \Lambda(a,a) = \dfrac{p(0,1)\ldots p(a-1,a)}{(1-\lambda_0)\ldots(1-\lambda_{a-1})}.$$
For $q \in [0,1]$,
\begin{equation}\label{eq:tsupht}
	\E^{-1]}_0(q^{T_a}) = \mP_0(T_a < T_{-1}) \prod_{i=0}^{a-1} \frac{(1 - \lambda_i) q}{1 - \lambda_i q}.
\end{equation}
The desired result follows immediately from 
Theorem \ref{thm:green2}, Lemma \ref{lem:minEb} and \eqref{eq:tsupht}.
\end{proof}
With the two previous results in hand, we are now ready to complete the proof of  Theorem \ref{thm:hitting}.
\subsection{Proof of Theorem \ref{thm:hitting}} \label{sec:pfc}

Using the result in \cite[Chapter VII Theorem 2.1]{SVH04}, $(T_a - (a-x),\mP_x)$ is infinitely divisible. It follows from \eqref{eq:uphit} that
	\begin{align*}
	\mathbb{E}_x(q^{T_a - (a-x)}) &= q^{- (a-x)}\dfrac{H_q(x)}{H_q(a)},
	\end{align*}
	and by Proposition \ref{prop:sfmc}, the ratio $q^{- (a-x)}\dfrac{H_q(x)}{H_q(a)}$ is of the form \eqref{eq:fptchar2}. Therefore, by Proposition \ref{prop:fptchar}, $T_a - (a - x) \in \mathbb{G}_{a}(\mathbf{c}(x,a);\boldsymbol{\lambda};\boldsymbol{m})\cap \mathcal{ID}$.
	Next, using Lemma \ref{lem: below}, we have
	$$\mathbb{E}_x(q^{T_b} \1_{\{T_b < T_{\r}\}}) = \dfrac{H_q(x)}{H_q(b)} - \dfrac{H_q(\r)}{H_q(b)} \dfrac{H_q^{b]}(x)}{H_q^{b]}(\r)}.$$
	We substitute the F$q$E functions from Proposition \ref{prop:sfmc} and again recognize that it is of the form \eqref{eq:fptchar2}. By Proposition \ref{prop:fptchar}, $T_b \:(\textrm{resp.~}T_{b]})  \in \mathbb{G}_{\r-1}(\boldsymbol{\tilde{c}}(x,b);\boldsymbol{\tilde{\lambda}};\boldsymbol{\tilde{m}})$.

\subsection{Infinite divisibility and R-functions}\label{subsec:id}
In this subsection, we first review several main results in the study of infinitely divisible distribution, which will be used in analyzing the upward hitting times of $(X, \mP)$. We refer interested readers to \cite{SVH04} for a formal reference in the literature.  $\phi$ is the probability generating function of an infinitely divisible distribution on $\mathbb{Z}$ if and only if for every $n \in \mathbb{N}$ there is a pgf $\phi_n$ such that for $q \in (0,1)$,
\[ \phi(q) = \phi_n(q)^n. \]
We first characterize infinite divisibility via canonical sequence: $\phi$ is the pgf of an infinitely divisible distribution on $\mathbb{N}_0$ with $\phi(0) > 0$ and $\phi(1) = 1$ if and only if $\phi$ has the form
$$\phi(q) = \exp\bigg\lbrace-\sum_{k=0}^\infty \dfrac{r_k}{k+1}(1-q^{k+1})\bigg\rbrace,\quad 0 \leq q \leq 1,$$
where $r_k \geq 0$ for all $k \geq 0$. The canonical sequence $(r_k)_{k \in \mathbb{N}_0}$ is unique. Apart from the canonical representation, an alternative way to characterize an infinitely divisible law is by means of the R-function.  $\phi$ is the pgf of a possibly defective  infinitely divisible distribution on $\mathbb{N}_0$ with $\phi(0) > 0$ and  $\phi(1) = \exp\{-b\} \in (0,1], b\geq 0$ ($b=0$ if the distribution is non-defective) if and only if $\phi$ has the form
\begin{align}\label{eq:rfunctdefect}
\phi(q) = \exp\bigg\lbrace -b - \int_q^1 \mathrm{R}(s) \, ds \bigg\rbrace , \quad 0 \leq q < 1,
\end{align}
where R is an absolutely monotone function on $[0,1)$, that is, $R$ has non-negative derivatives of all order. R is unique and is the generating function of the canonical sequence $(r_k)_{k \in \mathbb{N}_0}$ of $\phi$.

Our first result in this section is a by-product of Theorem \ref{thm:green2}, where we obtain the interesting connection between absolute monotonicity and the F$q$E function $H_q$.

\begin{corollary}\label{it:id}
	The mapping $q \mapsto \dfrac{d}{dq} \log \left(q^{-(a-x)} \dfrac{H_q(x)}{H_q(a)}\right)$ is absolutely monotone if and only if $x \leq a$.	
\end{corollary}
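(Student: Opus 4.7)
The plan is to interpret $\phi(q) := q^{-(a-x)} H_q(x)/H_q(a)$ probabilistically and to apply the R-function characterization \eqref{eq:rfunctdefect}: any $\phi$ with $\phi(0)>0$ which is the pgf of an infinitely divisible (possibly defective) distribution on $\mathbb{N}_0$ satisfies $\frac{d}{dq}\log\phi(q)=R(q)$ for an absolutely monotone $R$; conversely, absolute monotonicity of $\phi'/\phi$ forces $\phi$ to be non-decreasing on $[0,1)$.

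For the ``if'' direction, assume $x\leq a$. Identity \eqref{eq:uphit} combined with the upward skip-free property gives $\phi(q)=\mathbb{E}_x[q^{T_a-(a-x)}]$, which is the pgf of the non-negative integer random variable $T_a-(a-x)$ under $\mathbb{P}_x$. By Theorem \ref{thm:hitting}, $T_a-(a-x)\in \mathcal{ID}$. Thus \eqref{eq:rfunctdefect} provides $b\geq 0$ and an absolutely monotone $R$ on $[0,1)$ with $\phi(q)=\exp\{-b-\int_q^1 R(s)\,ds\}$, and differentiating the logarithm gives $\frac{d}{dq}\log\phi(q)=R(q)$, absolutely monotone.

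For the ``only if'' direction, assume $x>a$. Choosing a reference point $\mathfrak{o}\leq a$ and applying the strong Markov property at $T_a$ (since the upward skip-free property forces the chain to cross $a$ before reaching $x$), one obtains $\mathbb{E}_\mathfrak{o}[q^{T_x}]=\mathbb{E}_\mathfrak{o}[q^{T_a}]\,\mathbb{E}_a[q^{T_x}]$, which combined with \eqref{eq:hatHqrev} yields
\[
\frac{H_q(x)}{H_q(a)} \;=\; \frac{1}{\mathbb{E}_a[q^{T_x}]}.
\]
Since $T_x\geq x-a$ a.s.\ under $\mathbb{P}_a$, one factors $\mathbb{E}_a[q^{T_x}]=q^{x-a}\psi(q)$ where $\psi(q):=\mathbb{E}_a[q^{T_x-(x-a)}]$ is the (possibly defective) pgf of the non-negative integer random variable $T_x-(x-a)$. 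Hence $\phi(q)=1/\psi(q)$. Being the pgf of a non-negative random variable, $\psi$ is non-decreasing on $[0,1]$, so $\phi$ is non-increasing. Were $\frac{d}{dq}\log\phi$ absolutely monotone it would be non-negative on $[0,1)$, forcing $\phi$ non-decreasing; combined with the non-increasing conclusion this would force $\phi$ (equivalently $\psi$) to be constant, that is, $T_x-(x-a)$ deterministic under $\mathbb{P}_a$. Irreducibility rules this out: beyond the ``straight-up'' path giving $\mathbb{P}_a(T_x=x-a)>0$, an excursion leaving $a$, returning to some state $\leq a$, and then climbing back to $x$ occurs with positive probability, so $\mathbb{P}_a(T_x>x-a)>0$ and $\psi$ is strictly increasing somewhere, producing the required contradiction.

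I expect the main obstacle to lie in the ``only if'' direction: the crucial step is to identify $H_q(x)/H_q(a)$ with the reciprocal of a genuine pgf via the strong Markov split at $T_a$, after which the clash between the two monotonicities is essentially immediate. The remaining subtlety is excluding the degenerate possibility that the chain is effectively deterministic between $a$ and $x$, which is handled by a one-line irreducibility argument.
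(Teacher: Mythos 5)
Your ``if'' direction is the paper's: by \eqref{eq:uphit} the quantity $q^{-(a-x)}H_q(x)/H_q(a)$ is $\E_x\!\left[q^{T_a-(a-x)}\right]$, a (possibly defective) pgf on $\N_0$; infinite divisibility of this variable combined with \eqref{eq:rfunctdefect} gives the absolute monotonicity. For the ``only if'' direction the paper is terser: it observes that $\frac{d}{dq}\log\!\left(q^{-(x-a)}H_q(a)/H_q(x)\right)$ is an R-function (the ``if'' case with $a,x$ swapped), hence $\geq 0$, and then asserts that the map in question, being its negative, is $<0$ and so fails absolute monotonicity. Your reformulation $\phi=1/\psi$ with $\psi=\E_a[q^{T_x-(x-a)}]$ a genuine pgf, pitting $\phi$ non-increasing against absolute monotonicity forcing $\phi$ non-decreasing, is essentially the same argument. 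What you do better is to explicitly flag the degenerate case $\psi$ constant (equivalently $R\equiv 0$), a case the paper's step from $\geq 0$ to $<0$ silently slides past.

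However, your one-line patch does not actually close that gap. Irreducibility plus the upward skip-free condition do \emph{not} guarantee that the chain can leave $a$ and ``return to some state $\leq a$'' before hitting $x$: it is perfectly consistent with the standing hypotheses that $p(y,y+1)=1$ for every $y\in[a,x-1]$ (with irreducibility recovered via downward jumps from states $\geq x$). In that case $T_x=x-a$ deterministically under $\P_a$, so $\psi\equiv 1$, $\phi\equiv 1$, and the mapping in the corollary is identically zero — which \emph{is} absolutely monotone even though $x>a$. So the ``only if'' direction genuinely fails here, and your claimed excursion does not occur with positive probability. The paper's own proof has the identical blind spot, so you have located a real fragility, but you have not repaired it; a correct version would need a nondegeneracy assumption on the one-step probabilities along $[a,x-1]$, or a weakening of the statement.
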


\begin{proof} Assume that $x \leq a$. Then,  from \cite[Chapter VII Theorem 2.1]{SVH04} we have  that $(T_a - (a-x),\mP_x)$ is a positive infinitely divisible variable. Combining this fact with the identity \eqref{eq:uphit} and \eqref{eq:rfunctdefect}, we conclude that $\dfrac{d}{dq} \log \left(q^{-(a-x)} \dfrac{H_q(x)}{H_q(a)}\right)$ is an R-function. Conversely, if $x > a$, then $\dfrac{d}{dq} \log \left(q^{-(x-a)} \dfrac{H_q(a)}{H_q(x)}\right) \geq 0$ is an R-function, so
$\dfrac{d}{dq} \log \left(q^{-(a-x)} \dfrac{H_q(x)}{H_q(a)}\right) = - \dfrac{d}{dq} \log \left(q^{-(x-a)} \dfrac{H_q(a)}{H_q(x)}\right) < 0$ cannot be an R-function which completes the proof of the Corollary.
\end{proof}

In the next two propositions, we proceed by offering explicit spectral formulas of R-functions and canonical sequences associated with the infinitely divisible variables $(T_a - a,\mP_0)$ and $(T_{a}-(a-x),\mP_x)$ for $0 \leq x \leq a$, building upon the results in Section \ref{subsec:fqe}, and thereby extending the work by Feller \cite{F66} for the case of birth-death random walk and Viskov \cite{Viskov00} for skip-free random walk. We start by defining a few notations. For $0 \leq j \leq a-1$, let
$${\textrm{R}}_j(s) = \frac{\lambda_j}{1 - \lambda_j s}.$$
Denote the number of real (resp.~complex) eigenvalues by $N_r^{0 \to a} = |\{j : \lambda_j \,\text{is real} \}|$ (resp.~$N_c^{0 \to a} = |\{j : \lambda_j \,\text{is complex} \}|$).

\begin{proposition}
	Suppose that $(X,\mP)$ is irreducible, $a \in \mathbb{N}_0$, and let $\lambda_j = \lambda_j^{[0,a-1]}$. The ${\mathrm{R}}$-function of $(T_a - a,\mP_0)$ is
	$${\mathrm{R}}^{0 \to a}(s) = \sum_{j=0}^{a-1} {\mathrm{R}}_j(s)$$
	with canonical sequence
	$$r^{0 \to a}_k = \sum_{j=0}^{a-1} \lambda_j^{k+1} = \sum_{j = 1}^{N_r^{0 \to a}} \lambda_j^{k+1} + \sum_{j = 1}^{N_c^{0 \to a}} |\lambda_j|^{k+1} \cos{({(k+1)\mathrm{Arg}\lambda_j})} , \quad k \in \mathbb{N}_0,$$ and constant $b^{0 \to a} = - \ln \mP_{0}(T_a < \infty)$.
\end{proposition}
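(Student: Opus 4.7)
The plan is to derive the pgf of $T_a - a$ under $\mP_0$ via the explicit spectral expression for the fundamental $q$-excessive function $H_q$ provided by Proposition \ref{prop:sfmc}, and then extract the $\mathrm{R}$-function and its canonical sequence by differentiating the logarithm of the pgf and expanding the result as a power series.

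First, I would combine the identity \eqref{eq:uphit}, which gives $\E_0(q^{T_a}) = H_q(0)/H_q(a)$, with Proposition \ref{prop:sfmc} (equivalently, the Fill formula \eqref{eq:tsupht}) to obtain
\begin{equation*}
\E_0(q^{T_a}) \;=\; \mP_0(T_a<\infty)\,\prod_{j=0}^{a-1}\frac{(1-\lambda_j)\,q}{1-\lambda_j q},
\end{equation*}
the constant prefactor being pinned down by sending $q\to 1^-$, since all rational factors tend to $1$. Multiplying through by $q^{-a}$ yields the pgf of the shifted variable,
\begin{equation*}
\phi(q) \;:=\; \E_0(q^{T_a-a}) \;=\; \mP_0(T_a<\infty)\,\prod_{j=0}^{a-1}\frac{1-\lambda_j}{1-\lambda_j q}.
\end{equation*}

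Second, since Theorem \ref{thm:hitting} (together with \cite{SVH04}) already identifies $(T_a-a,\mP_0)$ as a (possibly defective) infinitely divisible variable, the representation \eqref{eq:rfunctdefect} applies with a unique $\mathrm{R}$-function and constant $b$. Differentiating $\ln\phi$ with respect to $q$ gives
\begin{equation*}
\mathrm{R}^{0\to a}(s) \;=\; \frac{d}{ds}\ln\phi(s) \;=\; \sum_{j=0}^{a-1}\frac{\lambda_j}{1-\lambda_j s} \;=\; \sum_{j=0}^{a-1}\mathrm{R}_j(s),
\end{equation*}
and expanding each term as a geometric series yields $r^{0\to a}_k=\sum_{j=0}^{a-1}\lambda_j^{k+1}$ for every $k\in\N_0$. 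Evaluating at $s=1$ in \eqref{eq:rfunctdefect} finally identifies $b^{0\to a}=-\ln\phi(1)=-\ln\mP_0(T_a<\infty)$.

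To recast $r^{0\to a}_k$ in the real form displayed in the statement, I would invoke that $P$ is real, so any non-real eigenvalue appears paired with its conjugate: for such a pair one has $\lambda_j^{k+1}+\bar\lambda_j^{k+1}=2|\lambda_j|^{k+1}\cos((k+1)\mathrm{Arg}\,\lambda_j)$, and summing $\Re(\lambda_j^{k+1})$ over all complex indices (each pair counted with both members) rewrites the sum as in the proposition. The only non-algebraic point is the nonnegativity of $r^{0\to a}_k$, which is not obvious from the spectral formula when some $\lambda_j$ is negative or complex and is the main (mild) obstacle; however, it is delivered for free by the prior infinite divisibility of $T_a-a$ together with the uniqueness clause in \eqref{eq:rfunctdefect}, which forces $r^{0\to a}_k\geq 0$.
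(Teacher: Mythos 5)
Your proposal is correct and takes essentially the same route as the paper's proof: both start from the spectral product form of $\E_0(q^{T_a-a})$ given by Proposition \ref{prop:sfmc} together with the infinite divisibility from \cite[Chapter VII Theorem 2.1]{SVH04}, and then read off the $\mathrm{R}$-function from the exponential representation (you by differentiating $\ln\phi$, the paper by directly invoking the identity $\frac{1-\lambda}{1-\lambda q}=\exp\bigl\{-\int_q^1\frac{\lambda}{1-\lambda s}\,ds\bigr\}$, which is the same computation). The extra details you supply---pinning the prefactor by letting $q\to 1^-$, unpacking the conjugate-pair real form, and deducing nonnegativity of $r_k^{0\to a}$ from uniqueness of the $\mathrm{R}$-function---are all correct and merely make explicit points that the paper's proof states only briefly.
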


\begin{proof}
	Using the result in \cite[Chapter VII Theorem 2.1]{SVH04}, $(T_a - a,\mP_0)$ is infinitely divisible. By Proposition \ref{prop:sfmc},
	\begin{align*}
	\mathbb{E}_0(q^{T_a - a}) &= \mP_{0}(T_a < \infty) \prod_{j = 0}^{a-1} \dfrac{(1 - \lambda_j)}{1 - \lambda_j q}
	= \mP_{0}(T_a < \infty) \exp\bigg \lbrace- \int_q^1 \sum_{j=0}^{a-1} {\mathrm{R}}_j(s) \, ds \bigg \rbrace,
	\end{align*}
	where the second equality follows from  identity
	$$\dfrac{1 - \lambda}{1 - \lambda q} = \exp \bigg \lbrace - \int_q^1 \dfrac{\lambda}{1 - \lambda s} \, ds \bigg \rbrace,$$
 valid for $|\lambda|< 1/q$.
	Since complex eigenvalues occur in conjugate pair, we can check that $\sum_{j=0}^{a-1} {\mathrm{R}}_j(s) \in \mathbb{R}$.
\end{proof}

\begin{proposition}\label{prop:Rfunctxa}
	Suppose that $0 \leq x \leq a$. Following \eqref{eq:rfunctdefect}, the ${\mathrm{R}}$-function of $(T_a - (a-x),\mP_x)$ is
	$${\mathrm{R}}^{x \to a}(s) ={\mathrm{R}}^{0 \to a}(s) - {\mathrm{R}}^{0 \to x}(s)$$
	with canonical sequence $$r^{x \to a}_k = r^{0 \to a}_k - r^{0 \to x}_k, \quad k \in \mathbb{N}_0,$$ and constant $b^{x \to a} = - \ln \mP_{x}(T_a < \infty)$.
\end{proposition}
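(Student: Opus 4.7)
The plan is to reduce the claim to the preceding proposition via the strong Markov property combined with the upward skip-free structure. Indeed, for $0 \le x \le a$, any trajectory under $\P_0$ that reaches $a$ must first pass through $x$ (this is the key use of skip-freeness), so by the strong Markov property at $T_x$ one has the convolution identity
$$\E_0\bigl[q^{T_a}\bigr] \;=\; \E_0\bigl[q^{T_x}\bigr]\,\E_x\bigl[q^{T_a}\bigr],$$
which, after multiplying by $q^{-(a-x)} = q^{-a}\cdot q^{x}$, rewrites as
$$\E_x\bigl[q^{T_a-(a-x)}\bigr] \;=\; \frac{\E_0\bigl[q^{T_a-a}\bigr]}{\E_0\bigl[q^{T_x-x}\bigr]}.$$

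Next, I would substitute into both factors on the right the explicit R-function representation from the previous proposition, namely
$$\E_0\bigl[q^{T_a-a}\bigr] = \exp\!\Bigl\{-b^{0\to a} - \int_q^1 \mathrm{R}^{0\to a}(s)\,ds\Bigr\}, \qquad \E_0\bigl[q^{T_x-x}\bigr] = \exp\!\Bigl\{-b^{0\to x} - \int_q^1 \mathrm{R}^{0\to x}(s)\,ds\Bigr\}.$$
Taking the ratio and comparing with the canonical form \eqref{eq:rfunctdefect} gives the candidate R-function $\mathrm{R}^{0\to a} - \mathrm{R}^{0\to x}$ and the candidate constant $b^{0\to a}-b^{0\to x}$. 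The latter equals $-\ln \P_x(T_a<\infty)$ since the same strong Markov decomposition applied at $q=1$ yields $\P_0(T_a<\infty)=\P_0(T_x<\infty)\P_x(T_a<\infty)$. The canonical sequence identity $r^{x\to a}_k = r^{0\to a}_k - r^{0\to x}_k$ then follows at once from the uniqueness of the canonical sequence and the linearity of the power series expansion at $s=0$ of the R-function.

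The main (and in fact only) subtle point in this otherwise formal manipulation is to verify that $\mathrm{R}^{0\to a}-\mathrm{R}^{0\to x}$ is genuinely absolutely monotone on $[0,1)$, which is required for it to qualify as the R-function in the canonical representation \eqref{eq:rfunctdefect}; a priori the difference of two absolutely monotone functions need not have this property. However, this is precisely the content of Corollary \ref{it:id}, which asserts that $q \mapsto \frac{d}{dq}\log\bigl(q^{-(a-x)}H_q(x)/H_q(a)\bigr)$ is absolutely monotone if and only if $x \le a$; since \eqref{eq:uphit} identifies this derivative (up to sign) with the candidate R-function $\mathrm{R}^{x\to a}$, the hypothesis $0\le x\le a$ gives exactly what is needed. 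Infinite divisibility of $(T_a-(a-x),\P_x)$, which is required to invoke \eqref{eq:rfunctdefect} in the first place, also follows from Corollary \ref{it:id} (alternatively from \cite[Chapter VII Theorem~2.1]{SVH04}), completing the argument.
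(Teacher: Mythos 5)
Your proof is correct, and it follows essentially the same route as the paper's: both rest on the explicit product formula of Proposition \ref{prop:sfmc} (the paper substitutes it directly into $\mathbb{E}_x(q^{T_a-(a-x)})$, while you first factor $\mathbb{E}_x[q^{T_a-(a-x)}] = \mathbb{E}_0[q^{T_a-a}]/\mathbb{E}_0[q^{T_x-x}]$ via the skip-free strong Markov decomposition and then divide the two exponential representations from the preceding proposition, which is the same computation one step removed). Your extra care about absolute monotonicity of $\mathrm{R}^{0\to a}-\mathrm{R}^{0\to x}$ is not strictly needed: once infinite divisibility of $(T_a-(a-x),\mP_x)$ is secured by \cite[Chapter VII Theorem~2.1]{SVH04}, the representation \eqref{eq:rfunctdefect} guarantees existence and uniqueness of an absolutely monotone R-function, so identifying the exponent in the computed pgf with this canonical form automatically delivers absolute monotonicity; there is no circularity issue in invoking Corollary \ref{it:id} either, but it is the same external theorem doing the work.
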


\begin{proof}
	Using the result in \cite[Chapter VII Theorem 2.1]{SVH04}, $(T_a - (a-x),\mP_x)$ is infinitely divisible. Let $\beta_j = \lambda_j^{[0,x-1]}$ for $j = 0,\ldots,x-1$. By Proposition \ref{prop:sfmc},
	\begin{align*}
	\mathbb{E}_x(q^{T_a - (a-x)}) &= \mP_{x}(T_a < \infty) \left(\prod_{j = 0}^{x-1} \dfrac{\frac{(1 - \lambda_j)}{1 - \lambda_j q}}{\frac{(1 - \beta_j)}{1 - \beta_j q}} \right) \left( \prod_{j = x}^{a-1} \frac{(1 - \lambda_j)}{1 - \lambda_j q} \right) \\
	&= \mP_{x}(T_a < \infty) \exp\bigg \lbrace- \int_q^1 {\mathrm{R}}^{0 \to a}(s) - {\mathrm{R}}^{0 \to x}(s) \, ds \bigg \rbrace.
	\end{align*}
\end{proof}

Since $r^{x \to a}_k \geq 0$ for $k \in \N_0$, we immediately obtain the following ordering of eigenvalues.

\begin{corollary}\label{cor:Rfunctxa}
	Suppose that $0 \leq x \leq a$, and let $\beta_j = \lambda_j^{[0,x-1]}$ for $j = 0,\ldots,x-1$. Then $$\sum_{j=0}^{a-1} \lambda_j^{k+1} \geq \sum_{j=0}^{x-1} \beta_j^{k+1} \geq 0, \quad k \in \mathbb{N}_0.$$
\end{corollary}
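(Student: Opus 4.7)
The proof is essentially a one-line consequence of the two preceding propositions, so the plan is simply to unpack the meaning of each inequality in terms of canonical sequences of infinitely divisible laws.

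First I would recall that by \cite[Chapter VII Theorem 2.1]{SVH04}, both $(T_x - x, \mP_0)$ and $(T_a - (a-x), \mP_x)$ are (possibly defective) infinitely divisible variables on $\mathbb{N}_0$. By the canonical representation stated in Section \ref{subsec:id}, the canonical sequence of any such variable is non-negative. Applying this to $(T_x - x, \mP_0)$, the previous proposition (in its form for $x$ in place of $a$) gives $r^{0 \to x}_k = \sum_{j=0}^{x-1} \beta_j^{k+1} \geq 0$ for every $k \in \mathbb{N}_0$, which is the right-hand inequality of the claim.

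For the left-hand inequality, I would apply the same principle to the variable $(T_a - (a-x), \mP_x)$. By Proposition \ref{prop:Rfunctxa}, its canonical sequence is exactly $r^{x \to a}_k = r^{0 \to a}_k - r^{0 \to x}_k$. Non-negativity of this canonical sequence immediately gives $r^{0 \to a}_k \geq r^{0 \to x}_k$, i.e.~$\sum_{j=0}^{a-1} \lambda_j^{k+1} \geq \sum_{j=0}^{x-1} \beta_j^{k+1}$, as required.

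There is no substantial obstacle: the work has already been done in establishing infinite divisibility of the upward hitting times and in identifying the spectral form of the canonical sequences. The corollary is just the observation that the difference of the two canonical sequences is itself the canonical sequence of another infinitely divisible law, hence termwise non-negative. The only thing worth flagging in the write-up is that when the eigenvalues are complex, the equalities $r^{0 \to a}_k = \sum_{j=0}^{a-1} \lambda_j^{k+1}$ still yield real numbers because the complex eigenvalues appear in conjugate pairs, as noted in the proof of the previous proposition; this ensures the inequalities are meaningful as statements about real quantities.
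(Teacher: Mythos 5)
Your proposal is correct and follows the paper's own (very terse) argument exactly: the paper notes that $r^{x\to a}_k \geq 0$ for all $k$ (non-negativity of the canonical sequence of the infinitely divisible variable $(T_a-(a-x),\mP_x)$ from Proposition \ref{prop:Rfunctxa}), which gives the left inequality, and the right inequality comes from the same fact applied to $(T_x-x,\mP_0)$. Your unpacking — including the remark about conjugate pairs of complex eigenvalues guaranteeing real values — is a faithful expansion of the one-line proof preceding the corollary.
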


\begin{rk}
	We can rewrite the results in Proposition \ref{prop:Rfunctxa} and Corollary \ref{cor:Rfunctxa} in terms of trace. Define $P^{[0,a]}$ to be the restriction of $P$ from state $0$ to $a$. For $k \in \mathbb{N}_0$, we observe that $r_k^{0 \to a} = \mathrm{Tr}((P^{[0,a]})^{k+1})$, and so Corollary \ref{cor:Rfunctxa} can be rewritten as
	$$\mathrm{Tr}((P^{a]})^{k+1}) \geq \mathrm{Tr}((P^{x]})^{k+1}).$$
\end{rk}

%

\subsubsection{Upward hitting times}

In this part, we characterize a class of discrete distribution that contains all upward hitting times within the class $\Sf$.

\begin{definition}
	We say that a (possibly defective) distribution lies in class $\mathcal{U}$ if it is discrete with support on $\mathbb{N}_0$, infinitely divisible and each term of the canonical sequence can be written as
	\begin{align}\label{eq:uphitclass}
	\dfrac{r_k}{k+1} = \int_{-1}^{1} x^k w(x) \, dx, k \in \mathbb{N}_0,
	\end{align}
where the mapping $w$ satisfies the integrability condition
	$$\int_{-1}^{1} |w(x)|\, dx < \infty.$$
\end{definition}

\begin{proposition}
	Suppose that $0 \leq x \leq a$. Then, $(T_a - (a-x),\mP_x)$ belongs to the class $\mathcal{U}$.
\end{proposition}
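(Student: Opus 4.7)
The plan is to build an explicit integrable weight $w$ on $[-1,1]$ whose moments reproduce the canonical sequence of $(T_a - (a-x),\mathbb{P}_x)$, thereby placing the law in class $\mathcal{U}$. The starting point is Proposition \ref{prop:Rfunctxa}, which, together with the infinite divisibility from \cite[Chapter VII Theorem~2.1]{SVH04}, delivers the explicit canonical sequence
\[
r_k^{x \to a} \;=\; \sum_{j=0}^{a-1}\lambda_j^{k+1} \;-\; \sum_{j=0}^{x-1}\beta_j^{k+1},
\]
where $(\lambda_j)$ and $(\beta_j)$ are the eigenvalues of $P^{[0,a-1]}$ and $P^{[0,x-1]}$. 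These eigenvalues lie in the closed unit disc and, since the underlying matrices are real, occur either as real numbers in $[-1,1]$ or in complex conjugate pairs.

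The workhorse identity is $\tfrac{\mu^{k+1}}{k+1} = \int_{0}^{\mu} t^{k}\,dt$, valid for any $\mu$ in the closed unit disc (as a complex line integral when $\mu\notin\mathbb{R}$). For each real eigenvalue $\mu$, taking $w_\mu(t) = \mathrm{sgn}(\mu)\,\mathds{1}_{I_\mu}(t)$ with $I_\mu = [\min(0,\mu),\max(0,\mu)]$ immediately gives $\int_{-1}^{1} t^{k} w_\mu(t)\,dt = \tfrac{\mu^{k+1}}{k+1}$ for every $k\geq 0$, and $w_\mu$ is a step function with $\|w_\mu\|_{L^{1}}=|\mu|\leq 1$. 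Summing these $w_\mu$ with the appropriate signs over the real eigenvalues of the two matrices contributes the real-eigenvalue part of $r_k^{x \to a}/(k+1)$.

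For each complex conjugate pair $(\mu,\bar\mu)$ with $|\mu|<1$, I would deform the contour $\int_{0}^{\mu} t^{k}\,dt$ via Cauchy's theorem (permissible since $t^k$ is entire) into the $L$-shaped path $0 \to \mathrm{Re}(\mu) \to \mu$, then add the conjugate path and take twice the real part to obtain
\[
\frac{\mu^{k+1}+\bar\mu^{k+1}}{k+1} \;=\; \frac{2\,\mathrm{Re}(\mu)^{k+1}}{k+1} \;-\; 2\int_{0}^{\mathrm{Im}(\mu)} \mathrm{Im}\bigl(\mathrm{Re}(\mu)+iy\bigr)^{k}\,dy.
\]
The first piece is the real-eigenvalue contribution at $\mathrm{Re}(\mu)$ with multiplicity two. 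For the second piece, one expands $\mathrm{Im}(\mathrm{Re}(\mu)+iy)^{k}$ by the binomial formula and applies a Fubini-type rearrangement to reinterpret it as the $k$-th moment against an explicit real-valued density $w_{\mu,\bar\mu}$ compactly supported in $[-|\mu|,|\mu|]\subset[-1,1]$. Assembling everything, $w = \sum_j w_{\lambda_j} - \sum_j w_{\beta_j}$, with each complex pair bundled through $w_{\mu,\bar\mu}$, is a finite sum of $L^{1}$ pieces and hence lies in $L^{1}([-1,1])$, and by construction realises the prescribed moments. The main obstacle is the complex case: one must verify rigorously that the Fubini rearrangement of the vertical-leg integral genuinely produces an integrable density on $[-1,1]$ rather than a merely formal expression, which is delicate because the generating function $s\mapsto -\tfrac{1}{s}\log\bigl((1-\mu s)(1-\bar\mu s)\bigr)$ has complex branch points at $1/\mu,1/\bar\mu$ off the real axis, so the usual Stieltjes-inversion shortcut is unavailable and the density must be tracked by hand from the explicit binomial expansion.
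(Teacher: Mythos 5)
Your canonical-sequence starting point and the treatment of real eigenvalues (signed indicators $w_\mu = \mathrm{sgn}(\mu)\,\mathds{1}_{I_\mu}$) agree with the paper's proof, and in fact your version is slightly cleaner since it handles negative real eigenvalues that the paper's $\mathds{1}_{(0,\lambda_j)}$ silently excludes. For the complex pairs, however, you and the paper diverge: the paper inserts the $k$-dependent indicator $\mathds{1}_{(0,\,|\lambda_j|\cos((k+1)\mathrm{Arg}\lambda_j)^{1/(k+1)})}$, whereas you propose to deform the contour into an L-shape and push the vertical leg through a binomial/Fubini step. You rightly flag this last step as the sticking point and do not complete it.

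The gap you flag is not a matter of bookkeeping — it cannot be closed by the route you propose, nor in fact by the paper's, if the class $\mathcal{U}$ requires a single fixed $w\in L^1([-1,1])$ (which the definition's phrase ``the mapping $w$ satisfies the integrability condition'' strongly suggests). Here is the obstruction. Suppose $\frac{r_k^{x\to a}}{k+1}=\int_{-1}^1 t^k w(t)\,dt$ for all $k$ with a fixed $w\in L^1([-1,1])$. Summing against $s^k$ gives, near $s=0$,
\begin{equation*}
\int_{-1}^1\frac{w(t)}{1-st}\,dt \;=\; -\frac{1}{s}\log\frac{\prod_{j}(1-\lambda_j s)}{\prod_{j}(1-\beta_j s)},
\end{equation*}
and the left side extends analytically to $\mathbb{C}\setminus\bigl((-\infty,-1]\cup[1,\infty)\bigr)$, whereas the right side has logarithmic branch points at $s=1/\lambda_j$ and $s=1/\beta_j$. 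When $P^{[0,a-1]}$ or $P^{[0,x-1]}$ has a complex eigenvalue not cancelled by the other block, some branch point lies off the real axis, contradicting the required analytic continuation. So no fixed $L^1$ weight exists in that case; the representation is only available when all eigenvalues involved are real (or when complex pairs cancel between $\boldsymbol\lambda$ and $\boldsymbol\beta$). The paper's own construction sidesteps this only by letting the ``weight'' depend on $k$ — which is not a weight in the sense of class $\mathcal{U}$ — so the paper's proof has the same unresolved issue. Your instinct to pause and demand a rigorous check of the vertical-leg density was exactly right; the honest conclusion is that this construction, and with it the Proposition as stated, can only be salvaged under an additional hypothesis such as real eigenvalues (e.g.\ restricting to $\mathcal{S}$).
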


\begin{proof}
	It suffices to show \eqref{eq:uphitclass} for $r_k^{x \to a}$. Define $$w^{0 \to a} = \sum_{j=1}^{N_r^{0 \to a}} \1_{(0,\lambda_j)} + \sum_{j=1}^{N_c^{0 \to a}} \1_{(0,|\lambda_j| \cos{({(k+1)\mathrm{Arg}\lambda_j})}^{1/(k+1)})},$$ then
	\begin{align*}
	\int_{-1}^{1} y^k w^{0 \to a}(y) \, dy &= \dfrac{\sum_{j=0}^{a-1} \lambda_j^{k+1}}{k+1} = \dfrac{r^{0 \to a}_k}{k+1}.
	\end{align*}
	In addition, by the triangle's inequality,
	$$\int_{-1}^{1} |w^{0 \to a}(y)|\, dy \leq \sum_{j=0}^{a-1} \lambda_j < \infty.$$
	Let $w^{x \to a} = w^{0 \to a} - w^{0 \to x}$, we obtain
	\begin{align*}
	\int_{-1}^{1} y^k w^{x \to a}(y) \, dy &= \dfrac{r^{0 \to a}_k - r^{0 \to x}_k}{k+1} = \dfrac{r^{x \to a}_k}{k+1}, \\
	\int_{-1}^{1} |w^{x \to a}(y)|\, dy &\leq r_0^{0 \to a} + r_0^{0 \to x} < \infty,
	\end{align*}	
which completes the proof.
	
\end{proof}

\section{The class of skip-free Markov chains similar to  birth-death chains}  \label{sec:st}
In this Section, we develop an original methodology to obtain the spectral decomposition in Hilbert space of the (transition operator of) Markov chains that  belong to  the class $\S$ a subclass  of $\Sf$ (the class of upward skip-free Markov chains) which is defined in Definition \ref{def:Sclass} below. We recall that  as  the transition operator $P$ of a chain in $\Sf$ is usually non-self-adjoint (non-reversible)  in the weighted Hilbert space \[ \ell^2(\pi)=\bigg\{ f: E \mapsto \mathbb{R}; \:||f||^2_{\pi}=\sum_{x\in E} f^2(x)\pi(x)<\infty\bigg\},\]  where $\pi$ is the reference measure,  there is no spectral theorem available for such bounded linear operator except for normal operators. We already point out that the two subsequent  Sections  contain interesting and substantial applications of this spectral decomposition namely the study of the speed of convergence to equilibrium and the separation cutoff phenomenon. We proceed by  defining an equivalence relation on the set of transition matrices in $\Sf$.

\begin{definition}[Similarity]\label{def:sim}
We say  that the transition matrix $P$ of a Markov chain $X \in \Sf$  is similar to the transition matrix of a Markov chain $Q$ on $E$, and we write  $P \sim  Q$, if there exits  a bounded linear operator $\Lambda:\ell^2(\pi_Q) \to \ell^2(\pi)$ ($\pi_Q$ being the reference measure for $Q$) with bounded inverse such that
\begin{equation}
P \Lambda = \Lambda Q.
\end{equation}
When needed we may write  $P \stackrel{\Lambda}{\sim} Q$ to specify the intertwining kernel. Note that $\sim$ is an equivalence relationship on the set of transition matrices. In the context of Markov chains theory, intertwining between birth-death processes has first been studied in \cite{FD}.
\end{definition}

With Definition \ref{def:sim} in mind, we are now ready to define the $\S$ class.
\begin{definition}[The $\S$ class]\label{def:Sclass}
Suppose that $Q \in \mathcal{B}$, the set of transition matrix $Q$ on $E$ of an irreducible (or with at most one absorbing or entrance state) birth-death chain. The similarity orbit of $Q$ (in $\Sf$) is
\[ \S(Q) = \{P \in \mathcal{M};\:  P \sim Q \}, \]
and the $\S$ class is the union over all possible orbits
\[ \S = \bigcup_{Q \in \mathcal{B}} \S(Q). \]
Similarly, we define $\widehat{\S}(Q)$ and $\widehat{\S}$ by replacing $P$ with $\widehat{P}$ above. Finally, we say that $X \in \S^{M}$ if $X \in \S$ and $(X,\widehat{\mP})$ is stochastically monotone, i.e.~$y\mapsto \widehat{\mP}_y(X_1 \leq x)$ is non-increasing for every fixed $x$.
\end{definition}

\begin{rk}
	It is in general hard to find examples for the $\S$ class since $Q$ is restricted to $\mathcal{B}$ and $P$ is restricted to $\mathcal{M}$. However, if we only require $Q$ to be a reversible Markov chain, there are a few examples that have been studied in the thesis of Choi \cite[Section $3.6$]{ChoiThesis}.
\end{rk}
We remark that $\widehat{\Lambda}:\ell^2(\pi) \to \ell^2(\pi_Q)$, the adjoint of $\Lambda$, is a bounded operator with a bounded inverse as well (see e.g.~\cite[Proposition $2.6$]{Con90}). We write $\norm{\cdot}_{op}$ to be the operator norm, i.e.~$\norm{P}_{op}=\sup_{||f||_{\pi}=1}||Pf||_{\pi}$.


Before stating the main result of this Section, we introduce the following class.
\begin{definition}[The $\Mc$ class]\label{def:Mcclass}
	We say that, for some $\r\geq 3$,  $X \in \Mc_{\r}$ if $(X,\mP) \in \Sf$ with $E=\{0,1\ldots,\r\}$ and for every $x \in [0,\r-1]$, its time-reversal $(X,\widehat{\mP})$  satisfies
	\begin{enumerate}
		\item(stochastic monotonicity) $\widehat{\mP}_{x+1}(X_1 \leq x)\leq \widehat{\mP}_{x}(X_1 \leq x) $,
		\item(strict stochastic monotonicity) $\widehat{\mP}_{x+1}(X_1 \leq x+1) < \widehat{\mP}_{x}(X_1 \leq x+1), \quad x\neq \r-1, $
		\item(restricted upward jump) $\widehat{\mP}_{x+1}(X_1 \leq x+k) = \widehat{\mP}_x(X_1 \leq x+k), \quad  x\neq \r-1, \quad k \in [2,\r-1-x]$.
	\end{enumerate}
	Moreover, we say  $X \in \mathcal{MC}^+_{\r}$ if $X \in \Mc_{\r}$ and
	\begin{enumerate}
		\item[(4)](lazy Siegmund dual) $\widehat{\mP}_{x}(X_1 \geq x+1)+\widehat{\mP}_{x+1}(X_1 = x)\leq \frac{1}{2}, \quad x=0,\ldots,\r-1.$
	\end{enumerate}
	When there is no ambiguity of the state space, we write $\Mc = \Mc_{\r}$ (resp.~$\Mc^+ = \Mc_{\r}^+$).
\end{definition}

\begin{rk}
	For a numerical example illustrating the $\Mc$ class, we refer the interested readers to Section \ref{subsec:numericalex}.
\end{rk}

We proceed by recalling  that $P$ has an $\pi$-dual or time-reversal $\widehat{P}$, that is, for $x,y \in E$,
$$\pi(x) \widehat{p}(x,y) = \pi(y) p(y,x),$$
where $\pi$ is a reference measure for $P$. We equip the Hilbert space $\ell^2(\pi)$ with the usual inner product $\langle\cdot,\cdot\rangle_{\pi}$ defined by
$$\langle f,g \rangle_{\pi} = \sum_{x \in E} f(x) g(x) \pi(x), \quad f,g \in \ell^{2}(\pi).$$
 We also recall that a basis $(f_k)$ of a Hilbert space $\mathcal{H}$ is a Riesz basis if it is obtained from an orthonormal basis $(e_k)$ under a bounded invertible operator $T$, that is, $T e_k = f_k$ for all $k$. It can be shown, see e.g.~\cite[Theorem $9$]{Y01}, that the sequence $(f_k)$ forms a Riesz basis if and only if $(f_k)$ is complete in $\mathcal{H}$ and there exist positive constants $A,B$ such that for arbitrary $n \in \N$ and scalars $c_1,\ldots,c_n$, we have
\begin{equation}\label{eq:Rieszb}
	A \sum_{k=1}^n |c_k|^2 \leq \norm{\sum_{k=1}^n c_k f_k}^2 \leq B \sum_{k=1}^n |c_k|^2.
\end{equation}
If $(g_k)$ is a biorthogonal sequence to $(f_k)$, that is, $\langle f_k, g_m \rangle_{\pi} = \delta_{k,m}$, $k,m \in \N$ and $\delta_{k,m}$ is the Kronecker symbol, then $(g_k)$ also forms a Riesz basis.

\begin{theorem}\label{thm:mcins}
 \begin{enumerate}
 \item \label{it:eig} Assume that $\r < \infty$ and $Q$ is the transition kernel of an irreducible birth-death process, then $P \stackrel{\Lambda}{\sim} Q$ if and only if $P$ has real and distinct eigenvalues.
	\item \label{it:sbd} $(X, \mP) \in \S$ with   $P \stackrel{\Lambda}{\sim} Q$ if and only if $(X, \widehat{\mP}) \in \widehat{\mathcal{S}}$ with $Q \stackrel{\widehat{\Lambda}}{\sim} \widehat{P}$, where $\widehat{\Lambda}$ is the adjoint operator of $\Lambda$.
\end{enumerate}
Let us assume that $(X, \mP) \in \S$ with   $P \stackrel{\Lambda}{\sim} Q$. Then the following holds.
\begin{enumerate}[(a)]
	\item \label{it:h} For any $h \in \mathcal{E}$ then $(X, \mP^h) \in \S$ with    $P^h \stackrel{\Lambda^
h}{\sim} Q$ and $\Lambda^h = D_{h}^{-1}\Lambda $, where $D_{h}$ is a diagonal matrix of $h$.
\item \label{it:normal} If $P$ is normal, i.e.~$P\widehat{P}=\widehat{P}P$, then $P$ is self-adjoint, i.e.~$P=\widehat{P}$.
	\item \label{it:comp} $P$ is compact (resp.~trace class) if and only if $Q$ is.
	\item \label{it:spec} Assume that $P$ is compact then	for  any $f \in \ell^{2}(\pi)$ and $n \in \mathbb{N}$, \[ P^n f = \sum_{k=0}^{\r} \lambda_k^n \langle f, f_k^* \rangle_{\pi} f_k,\]
	where the set $(f_k)_{k=0}^\r$ are real eigenfunctions of $P$ associated to the real eigenvalues $(\lambda_k)_{k=0}^\r$ and form a Riesz basis of $\ell^2(\pi)$, and the set $(f_k^*)_{k=0}^\r$ is the unique Riesz basis biorthogonal to $(f_k)_{k=0}^\r$. In particular, for any $x,y \in E$ and $n \in \mathbb{N}$, the spectral expansion of $P$ is given by
		$$ P^n(x,y) = \sum_{k=0}^{\r} \lambda_k^n f_k(x) f_k^*(y)\pi(y).$$
	\item \label{it:Mc} $\Mc \subseteq \S^M$.
	\item \label{it:spech} Assume  that  the condition of the item \eqref{it:spec} holds. If $\r<\infty$ is absorbing and $\l$ is regular, then  with the same notation as above, we have \[ \mP_x( T_{\r} = n)  = \sum_{k=0}^{\r} \lambda_k^n(1-\lambda_k) \langle \1, f_k^* \rangle_{\pi} f_k(x),\]
and assuming that $\l<\infty$ is absorbing and $\r<\infty$ is regular then
\[ \mP_x( T_{\l)} = n)  = \sum_{k=0}^{\r} \lambda_k^n(1-\lambda_k) \langle \1, f^*_k \rangle_{\pi} f_k(x).\]
	\end{enumerate}
\end{theorem}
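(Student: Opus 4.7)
The unifying strategy is to exploit the intertwining $P\Lambda = \Lambda Q$ with $Q$ a self-adjoint birth-death kernel in $\ell^2(\pi_Q)$ (as recalled in Section \ref{subs:bd}), so that the Karlin-McGregor spectral data of $Q$ and its Krein-type inverse counterpart can be transported to $P$ through the bounded invertible $\Lambda$. For \eqref{it:eig}, similarity preserves the spectrum and an irreducible finite birth-death matrix becomes a Jacobi matrix after conjugation by $D_{\sqrt{\pi_Q}}$, hence has real simple eigenvalues; this gives the forward direction. For the converse, I would diagonalize the finite matrix $P=VDV^{-1}$ and invoke the inverse spectral theorem for Jacobi matrices (the Krein-Dym-McKean result recalled after \eqref{eq:sbd}) to build a birth-death kernel $Q = UDU^{-1}$ with the same spectrum, and then set $\Lambda = VU^{-1}$. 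For \eqref{it:sbd}, the adjoint of $P$ in $\ell^2(\pi)$ is $\widehat{P}$ by \eqref{eq:dualdef} and $Q$ is self-adjoint in $\ell^2(\pi_Q)$, so taking adjoints in the intertwining yields $\widehat{\Lambda}\widehat{P} = Q\widehat{\Lambda}$.

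For the consequences \eqref{it:h}--\eqref{it:comp}: (a) use $P^h = D_h^{-1}PD_h$ and set $\Lambda^h = D_h^{-1}\Lambda$, then the intertwining for $P^h$ follows by direct substitution. (b) A normal operator similar to a self-adjoint one has real spectrum, and a normal operator with real spectrum is self-adjoint by the spectral theorem for normal operators. (c) Compactness (resp.~trace class) is stable under bounded invertible conjugation, which is standard.

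For \eqref{it:spec}, $Q$ being compact and self-adjoint admits an orthonormal basis $(e_k)$ of eigenfunctions in $\ell^2(\pi_Q)$ with real eigenvalues $(\lambda_k)$. Define $f_k = \Lambda e_k$ and $f_k^{*} = \widehat{\Lambda}^{-1}e_k$; these are biorthogonal since $\langle f_k, f_m^{*}\rangle_\pi = \langle \Lambda e_k, \widehat{\Lambda}^{-1}e_m\rangle_\pi = \langle e_k, e_m\rangle_{\pi_Q} = \delta_{k,m}$, and the bounds in \eqref{eq:Rieszb} follow from $\|\Lambda\|_{op}$ and $\|\Lambda^{-1}\|_{op}$, so both families are Riesz bases of $\ell^2(\pi)$. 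Each $f_k$ satisfies $Pf_k = P\Lambda e_k = \Lambda Q e_k = \lambda_k f_k$, and expanding $f = \sum_k \langle f, f_k^{*}\rangle_\pi f_k$ gives $P^n f = \sum_k \lambda_k^n \langle f, f_k^{*}\rangle_\pi f_k$; evaluating at $\delta_y$ yields the pointwise formula.

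The main obstacle is \eqref{it:Mc}, where the structural conditions on $\Mc$ must be shown to produce a bona fide intertwining; by \eqref{it:sbd} it suffices to construct $\widehat{\Lambda}$ linking $\widehat{P}$ to some birth-death $Q$. My plan is to build $\widehat{\Lambda}$ row by row: the stochastic monotonicity controls the sign structure so that each row is a sub-probability vector, the strict stochastic monotonicity ensures invertibility, and the restricted upward jump condition collapses $Q\widehat{\Lambda}$ into a tridiagonal kernel, forcing $Q$ to be birth-death; monotonicity of the dual then places $X$ in $\S^M$. Finally for \eqref{it:spech}, upward skip-freeness gives $\mP_x(T_\r = n) = (P^{n-1}(I-P)\1_{E^{\r)}})(x)$ on the sub-stochastic kernel obtained by killing at $\r$; applying the expansion of \eqref{it:spec} to $(I-P)\1$, using $(I-P)f_k = (1-\lambda_k)f_k$ and $\langle \1, f_k^{*}\rangle_\pi$, yields the announced formula (up to reindexing by one step), and the statement at $\l$ follows by duality via \eqref{it:sbd}.
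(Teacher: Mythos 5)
Items (1), (2), (a), (c), (d) and (f) of your proposal track the paper's argument closely, and for (b) you give an arguably cleaner route: similarity to the self-adjoint $Q$ forces $P$ to have real spectrum, and a normal operator with real spectrum is self-adjoint by the spectral theorem, whereas the paper first invokes that similar normal matrices are unitarily equivalent (so $\Lambda\widehat{\Lambda}=I$) and then computes $P=P\Lambda\widehat{\Lambda}=\Lambda\widehat{\Lambda}\widehat{P}=\widehat{P}$. Your explicit construction $f_k^*=\widehat{\Lambda}^{-1}e_k$ in (d) with the direct check $\langle \Lambda e_k,\widehat{\Lambda}^{-1}e_m\rangle_\pi=\langle e_k,e_m\rangle_{\pi_Q}=\delta_{k,m}$ is also a nice alternative to the paper's appeal to uniqueness of the biorthogonal Riesz basis.

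The genuine gap is item (e), $\Mc\subseteq\mathcal{S}^M$, where you offer only a ``plan'' and do not identify the concrete intertwiner. The paper's proof is structural and hinges on the Siegmund kernel $H_S(x,y)=\1_{\{x\leq y\}}$ (an upper-triangular link with banded inverse $H_S^{-1}(x,y)=\1_{\{x=y\}}-\1_{\{x=y-1\}}$): one sets $\widetilde{P}^T=H_S^{-1}\widehat{P}H_S$, where stochastic monotonicity of $\widehat{P}$ makes $\widetilde{P}$ sub-Markovian (Siegmund duality), the downward skip-free property of $\widehat{P}$ gives $\widetilde{p}(x,x+1)>0$, the strict monotonicity condition forces $\widetilde{p}(x,x-1)>0$, and the restricted upward jump condition annihilates every entry $\widetilde{p}(x,y)$ with $y\leq x-2$, so that the restriction $\widetilde{P}^{bd}$ to $[0,\r-1]$ is a genuinely tridiagonal, irreducible, sub-stochastic kernel. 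A final step you omit entirely is still required: $\widetilde{P}^{bd}$ is only sub-stochastic, and one must perform a Doob $h$-transform by the harmonic function $\widetilde{h}(x)=\mathbb{P}_x(\widetilde{T}^{bd}_{\r-1}<\widetilde{T}^{bd})$ to turn it into an honest ergodic birth-death transition matrix $Q\in\mathcal{B}$. Your ``row by row'' construction of $\widehat{\Lambda}$ hints at the same sign structure but neither identifies $H_S$, nor proves the resulting kernel is tridiagonal, nor addresses the sub-stochastic-to-stochastic normalization; without these the argument does not close.
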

\begin{proof}
	First, if $P \stackrel{\Lambda}{\sim} Q$, then $P$ has real and distinct eigenvalues since $Q$ has real and distinct eigenvalues. Conversely, if $P$ has real and distinct eigenvalues, $P$ is diagonalizable, so there exists an invertible $\Lambda$ such that
	$$P = \Lambda D \Lambda^{-1}.$$
	where $D$ is the diagonal matrix storing the eigenvalues of $P$. Given the spectral data $D$, by the inverse spectral theorem, see e.g.~\cite[Section 5.8]{DM76}, one can always construct an ergodic Markov chain with transition matrix $Q$ such that
	$$Q = VDV^{-1}.$$
	Next, we show item \eqref{it:sbd}. If $P \stackrel{\Lambda}{\sim} Q$, then for $f \in \ell^2(\pi_{Q})$ and $g \in \ell^2(\pi)$,
	$$\langle f, \widehat{\Lambda} \widehat{P} g \rangle_{\pi_{Q}} = \langle P \Lambda f, g \rangle_{\pi} = \langle \Lambda Q f, g \rangle_{\pi} = \langle f, Q \widehat{\Lambda} g \rangle_{\pi_{Q}},$$
	which shows that $Q \stackrel{\widehat{\Lambda}}{\sim} \widehat{P}$. The opposite direction can be shown similarly. Item \eqref{it:h} follows directly from
	$$P \Lambda = D_h P^h D_h^{-1} \Lambda =  D_h P^h \Lambda^h = \Lambda Q.$$
For the item \eqref{it:normal}, we recall, from the spectral theorem, that if two normal matrices are similar then they are unitary equivalent that is $\Lambda^{-1}=\widehat{\Lambda}$. Then, the proof of this claim is completed since we easily deduce, from the item \eqref{it:sbd}, that
\begin{equation} \label{eq:interadj}
P=P\Lambda \widehat{\Lambda}=\Lambda \widehat{\Lambda}\widehat{P}=\widehat{P}.
\end{equation}
	Next, we turn to the proof of the item \eqref{it:comp}. If $P$ is compact (resp.~ trace class), then $Q = \Lambda^{-1} P \Lambda$ is compact (resp.~ trace class) since the product of bounded and compact (resp.~ trace class) operator is a compact (resp.~ trace class) operator, see \cite[Proposition $4.2$]{Con90}  (resp.~ see \cite[Page $218$]{RS80}). To show, in the item \eqref{it:spec}, that $(f_k)$ and $(f_m^*)$ are biorthogonal, we note that the fact that  
	$P$ has distinct eigenvalues yields that $\langle f_k , f_m^* \rangle_{\pi} = \delta_{{k, m}}$ for any $k,m$.
	 Next, denote $(g_k)$ to be the (orthogonal) eigenfunctions of $Q$, see e.g.~\cite{LR54}. Since $f_k = \Lambda g_k$ and $\Lambda$ is bounded, $(f_k)$ is complete as $(g_k)$ is a basis. As $\Lambda$ is bounded from above and below, for any $n \in \N$ and arbitrary sequence $(c_k)_{k=1}^{n}$, we have
	\begin{align*}
		A \sum_{k=1}^n |c_k|^2 \leq \norm{\sum_{k=1}^n c_k f_k}^2_{\pi} = \norm{\Lambda \sum_{k=1}^n c_k g_k}^2_{\pi} \leq B \sum_{k=1}^n |c_k|^2,
	\end{align*}
	where we can take $A = \norm{\Lambda^{-1}}^{-2}$ and $B = \norm{\Lambda}^2$, so \eqref{eq:Rieszb} is satisfied. It follows from \cite[Theorem $9$]{Y01} that there exists the sequence $(f_k^*)$ being the unique Riesz basis biorthogonal to $(f_k)_{k=0}^\r$, and, any $f \in \ell^2(\pi)$ can be written as
	$$f = \sum_{k=0}^{\r} c_k f_k,$$
	where $c_k = \langle f , f_k^* \rangle_{\pi}$. Desired result follows by applying $P^n$ to $f$ and using $P^n f_k = \lambda_k^n f_k$. In particular, if we take $f = \delta_y$, the Dirac mass at $y$, and evaluate the resulting expression at $x$, we obtain the spectral expansion of $P$. To show \eqref{it:Mc}, we write  $\widetilde{P}$ the so-called Siegmund dual (or $H_S$-dual) of $\widehat{P}$. That is, $\widetilde{P}^T = H_S^{-1} \widehat{P} H_S$ where  $H_S = (H_S(x,y))_{x,y \in E}$ is defined to be
	$H_S(x,y) = \1_{\{x \leq y\}}$
	and its inverse $H_S^{-1}=(H^{-1}_S(x,y))_{x,y \in E}$ is
	$H_S^{-1}(x,y) = \1_{\{x = y\}} - \1_{\{x = y - 1\}}$, see \cite{Sieg76}. Since $X \in \Mc$, then $\widehat{P}$ is stochastically monotone, hence from \cite[Proposition 4.1]{ASM03}, we have that $\widetilde{P}$ is a sub-Markovian kernel. For $x \in [1,\r-1]$, $\widetilde{p}(x,x+1) = \widehat{p}(x+1,x) > 0$ since $\widehat{P}$ is irreducible and downward skip-free, and $\widetilde{p}(x,y) = 0 \quad \forall y \geq x+2$. We also have $\widetilde{p}(0,1) = \widehat{p}(1,0) > 0$. For $x \in [0,\r-2]$, condition $(2)$ in $\Mc$ gives that $\widetilde{p}(x,x-1) > 0$, while condition $(3)$ in $\Mc$ guarantees that $\widetilde{p}(x,y) = 0$ for each $x \in [0,\r-1]$ and $y \in [0,x-2]$. That is, $\widetilde{P}$ is a (strictly substochastic) irreducible birth-death chain when restricted to the state space $[0,\r-1]$. Denote $\widetilde{P}^{bd}$  the restriction of $\widetilde{P}$ to $[0,\r-1]$. By breaking off the last row and last column of $\widetilde{P}$, we can write
	\begin{align}\label{eq:hatPbd}
	\widetilde{P} = \left( \begin{array}{cc}
	\widetilde{P}^{bd} & \mathbf{v}  \\
	\mathbf{0} & 1  \\
	\end{array} \right) = (H_S^{-1} \widehat{P} H_S)^T ,
	\end{align}
	where $\mathbf{0}$ is a row vector of zero, and $\mathbf{v}$ is a column vector storing $\widetilde{p}(x,\r)$ for $x \in [0,\r-1]$.
	Observing that the last row of $\widetilde{P}$ is zero except the last entry, we have
		$$M^{[ 0,\r-1 ]} = \Lambda^{[0,\r-1]} \widetilde{P}^{bd}.$$
		Note that $\widetilde{P}^{bd}$ is a strictly substochastic matrix with $\r$ being a killing boundary. Denote $\widetilde{T}^{bd}$ to be the lifetime of Markov chain with transition kernel $\widetilde{P}^{bd}$. However,  defining, with the obvious notation, for any $x \in [ 0,\r-1 ]$,
		$$\widetilde h(x) = \mathbb{P}_x(\widetilde{T}^{bd}_{\r -1} < \widetilde{T}^{bd}), $$
		we have, according to Theorem \ref{thm:green2}, that $\widetilde h$ is an harmonic function for $\widetilde{P}^{bd}$, i.e.~$\widetilde{P}^{bd} \widetilde h =\widetilde h$.  Hence, a standard result in Martin boundary theory, see e.g.~Theorem \ref{thm:htransp}, entails that the Markov chain with  transition kernel $Q$, defined on $[ 0,\r-1 ]\times [ 0,\r-1 ]$ by  $Q(x,y) = \frac{\widetilde h(y)}{\widetilde h(x)}\widetilde{P}^{bd}(x,y)$, is an ergodic birth-death chain, which completes the proof. Finally to show item \eqref{it:spech}, after observing that, for any $n \in \mathbb{N} $ and $x \in E$,
\[\P_x(T_\r >n ) = \sum_{y \in E}\P_x(X_n=y,T_\r >n )=P\1(x)\]
the first representation in \eqref{it:spec}  with $f\equiv 1$, $n=1$  and easy algebra yields the desired result. The last claim follows by similar means.
\end{proof}




\section{Convergence to equilibrium}
As a first application of the spectral decomposition stated in Theorem \ref{thm:mcins}, we derive accurate information regarding the speed of convergence to stationarity for ergodic chains in $\S$ in both the Hilbert space topology and in total variation distance. There have been a rich literature devoted to the study of convergence to equilibrium for non-reversible chains, see e.g.~\cite{Fill91, LSC97} and the references therein. In these papers, to overcome the lack of a spectral theory, the authors  resort to reversibilization procedures to extract bounds for the distance to stationarity. Another popular approach for studying the rate of convergence is by coupling techniques \cite{Thorisson00}. Our approach reveals a natural extension to the non-reversible case of the classical spectral gap that appears in the study of reversible chains, see e.g.~\cite{LSC97}. To state our result we need  to introduce some notation. We denote the second largest eigenvalue in modulus (SLEM) or the spectral radius of $P$ in the Hilbert space $\ell^2_0(\pi) = \{ f \in \ell^2(\pi); \:~\langle \1,f \rangle_{\pi} = 0 \}$,  by $\lambda_{*} = \lambda_{*}(P) = \sup\{|\lambda_i|;~ \lambda_i \neq 1\}$, then the \textit{absolute spectral gap} is $\gamma_* = 1 - \lambda_*$. For any two probability measures $\mu, \nu$ on $E$, the total variation distance between $\mu$ and $\nu$ is given by
$$|| \mu - \nu ||_{TV} = \dfrac{1}{2} \sum_{x \in E} |\mu(x) - \nu(x)|.$$
For $n \in \mathbb{N}$, the total variation distance from stationarity of $X$ is
$$d(n) = \max_{x \in E} || \delta_x P^n -  \pi ||_{TV}.$$
For $g \in \ell^2(\pi)$, the mean of $g$ with respect to $\pi$ can be written as $\E_{\pi}(g) = \langle g,\1 \rangle_{\pi}$. Similarly, the variance of $g$ with respect to $\pi$ is $\mathrm{Var}_{\pi}(g) = \langle g,g \rangle_{\pi} - \E_{\pi}^2(g) $.
Finally, we recall that Fill in \cite[Theorem $2.1$]{Fill91} obtained when $\r<\infty$ the following  bound valid for all  $n \in \mathbb{N}_0$
\begin{align}\label{eq:tvfill}
	d(n) \leq \dfrac{\sigma_*(P)^n}{2} \sqrt{\dfrac{1-\pi_{min}}{\pi_{min}}},
\end{align}
where $\pi_{min} = \min\limits_{x \in E} \pi(x)$ and $\sigma_*(P) = \sqrt{\lambda_*(P \widehat{P})}$ is the second largest singular value of $P$. We obtain the following refinement for Markov chains in the class $\S$.

\begin{theorem}\label{thm:spectralexp}
	Let  $X \in  \S $  and $X$ is ergodic with stationary distribution $\pi$ and $P \stackrel{\Lambda}{\sim} Q$.
\begin{enumerate}
\item \label{it:varbd} For any $n \in \mathbb{N}_0$,  we have
		\begin{align} \label{eq:hyp}
			\lambda_*^n\leq \norm{P^n - \pi}_{\ell^2(\pi) \to \ell^2(\pi)} \leq  \sigma^{n}_*(P)\1_{\{n<n^*\}} +\kappa(\Lambda)  \lambda_*^n\1_{\{n\geq n^*\}}
		\end{align}
where $n^* = \lceil \frac{\ln \kappa(\Lambda)}{\ln \sigma_*(P)-\ln\lambda_*} \rceil$ and $\kappa(\Lambda)= \norm{\Lambda}_{op} \: \norm{\Lambda^{-1}}_{op} \geq 1$ is the condition number of $\Lambda$.
\item \label{it:v}
If $\r<\infty$ then $P$ is non-reversible if and only if $\kappa(\Lambda)>1$.

\item \label{it:st} A sufficient condition for which $\lambda_* < \sigma_*(P)$ is given by $\max_{i \in E} p(i,i) > \lambda_*$. In such case, for $n$ large enough, the convergence rate $\lambda_*$ given in item \eqref{it:varbd} is strictly better than the reversibilization rate $\sigma_*(P)$.
\item \label{it:totalvar} Suppose now that $\r < \infty$. Then, for any $n \in \mathbb{N}_0$,
	\begin{align*}
	d(n) \leq \dfrac{\min\left(\sigma^{n}_*(P),{\kappa(\Lambda)}  \lambda_*^n \right)}{2} \sqrt{\dfrac{1-\pi_{min}}{\pi_{min}}},
	\end{align*}
	where $\lambda_{*}\leq \sigma_*(P)$.
\end{enumerate}
\end{theorem}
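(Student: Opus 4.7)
The plan is to tackle the four items in sequence, leveraging the spectral and similarity structure of Theorem \ref{thm:mcins}.

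For item \eqref{it:varbd}, the key structural observation is that the rank-one projection $\pi:f\mapsto\langle f,\1\rangle_\pi\1$ commutes with $P^n$ (since $P\1=\widehat{P}\1=\1$), so $P^n-\pi$ vanishes on $\mathrm{span}(\1)$ and coincides with $P^n$ on $\ell^2_0(\pi)$. The lower bound $\lambda_*^n\le\|P^n-\pi\|_{op}$ is then the standard spectral-radius bound, and the upper bound $\sigma_*(P)^n$ follows from submultiplicativity together with $\|P\|_{\ell^2_0\to\ell^2_0}^2=\|\widehat{P}P\|_{\ell^2_0\to\ell^2_0}=\sigma_*(P)^2$ (the last equality using self-adjointness of $\widehat{P}P$ on $\ell^2_0(\pi)$). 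For the bound $\kappa(\Lambda)\lambda_*^n$, rescale $\Lambda$ so that $\Lambda\1=\1$ (permissible since $1$ is a simple eigenvalue for both $P$ and $Q$, and rescaling leaves $\kappa(\Lambda)$ invariant); then $\Lambda^{-1}\1=\1$. For $f\in\ell^2_0(\pi)$, decompose $\Lambda^{-1}f=g_0+c\1$ orthogonally in $\ell^2(\pi_Q)$, so that $P^nf=\Lambda Q^ng_0+c\1$. Since $P^nf\in\ell^2_0(\pi)$, Pythagoras gives $\|P^nf\|_\pi\le\|\Lambda Q^ng_0\|_\pi\le\|\Lambda\|_{op}\lambda_*^n\|g_0\|_{\pi_Q}\le\kappa(\Lambda)\lambda_*^n\|f\|_\pi$, using the reversibility of $Q$ and the identity $\lambda_*(Q)=\lambda_*(P)$ from similarity. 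The minimum of the two upper bounds defines the crossover $n^*$.

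For item \eqref{it:v}, the condition $\kappa(\Lambda)=1$ is equivalent to $\Lambda$ being, up to a scalar multiple, a unitary from $\ell^2(\pi_Q)$ to $\ell^2(\pi)$; in that case $P=\Lambda Q\Lambda^{-1}$ inherits self-adjointness from $Q$, hence is reversible. Conversely, if $P$ is reversible, then by item (a) of Theorem \ref{thm:mcins} both $P$ and $Q$ share the same distinct real spectrum and admit orthonormal eigenbases in their respective Hilbert spaces, so the operator matching these two bases provides a unitary intertwiner with $\kappa=1$.

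Item \eqref{it:st} is the most delicate. I would use the Rayleigh quotient lower bound $\sigma_*(P)\ge|\langle Pf,f\rangle_\pi|$ for unit $f\in\ell^2_0(\pi)$, with the test vector $f=(\delta_i-\pi(i)\1)/\sqrt{\pi(i)(1-\pi(i))}$, for which a direct computation gives $\langle Pf,f\rangle_\pi=(p(i,i)-\pi(i))/(1-\pi(i))$. Combining this with the lower bound $P^n(i,i)\ge p(i,i)^n$ and the pointwise estimate $|P^n(i,i)-\pi(i)|\le\|P^n-\pi\|_{op}$, one rules out the equality $\sigma_*(P)=\lambda_*$ under the hypothesis $\max_ip(i,i)>\lambda_*$; this step is the main obstacle, since the calibration between the diagonal entries of $P$ and the quantities $\sigma_*(P)$ versus $\lambda_*$ requires a careful asymptotic comparison. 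The strict inequality $\lambda_*<\sigma_*(P)$ then makes $n^*$ finite in item \eqref{it:varbd}, and hence $\kappa(\Lambda)\lambda_*^n$ beats $\sigma_*(P)^n$ for all $n\ge n^*$.

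For item \eqref{it:totalvar}, I apply the classical $\ell^1$--$\ell^2$ reduction. By Cauchy--Schwarz, $2d_x(n)=\|P^n(x,\cdot)/\pi(\cdot)-\1\|_{\ell^1(\pi)}\le\|P^n(x,\cdot)/\pi(\cdot)-\1\|_{\ell^2(\pi)}$. Using the duality $\pi(x)\widehat{P}^n(x,y)=\pi(y)P^n(y,x)$, this density rewrites as $\widehat{P}^ng_x$ with $g_x=\delta_x/\pi(x)$; since $\widehat{P}^n\1=\pi g_x=\1$, one has $(\widehat{P}^n-\pi)g_x=(\widehat{P}^n-\pi)(g_x-\1)$, with $\|g_x-\1\|_{\ell^2(\pi)}^2=(1-\pi(x))/\pi(x)$. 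Combining this with the adjointness identity $\|\widehat{P}^n-\pi\|_{op}=\|P^n-\pi\|_{op}$, maximizing over $x$, and inserting the estimate from item \eqref{it:varbd} yields the announced total variation bound.
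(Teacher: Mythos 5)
Your treatment of items (1), (2), and (4) is sound, but item (1) is a genuinely different route from the paper, and item (3) has a real gap that you yourself flag but do not close.

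For item (1), the paper introduces a synthesis operator $T^*\colon \ell^2 \to \ell^2(\pi)$, $\alpha \mapsto \sum_i\alpha_i f_i$, built on the Riesz basis $(f_i)=(\Lambda q_i)$ from Theorem \ref{thm:mcins}, and obtains the bound by combining $\|T^*\|_{op}\le \|\Lambda\|_{op}$ with $\sum_i|\langle g,f_i^*\rangle_\pi|^2 = \|\Lambda^{-1}g\|^2_{\pi_Q}$. Your argument sidesteps the biorthogonal system entirely: you normalize $\Lambda$ so that $\Lambda\1=\1$ (legitimate, because $\Lambda\1$ is $P$-harmonic, hence a constant, and $\kappa$ is scale-invariant), split $\Lambda^{-1}f = g_0 + c\1$ orthogonally in $\ell^2(\pi_Q)$, and then use the Pythagoras identity $\|\Lambda Q^n g_0\|^2_\pi = \|P^n f\|^2_\pi + c^2$ together with the spectral gap of the reversible $Q$ on $\ell^2_0(\pi_Q)$. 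This is cleaner and more self-contained than the paper's Riesz-basis computation, though it purchases that economy at the price of the normalization step, which the paper does not need.

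For item (3), your Rayleigh quotient argument does not reach the claimed hypothesis. With the test vector $f=(\delta_i-\pi(i)\1)/\sqrt{\pi(i)(1-\pi(i))}$ you correctly compute $\langle Pf,f\rangle_\pi = \dfrac{p(i,i)-\pi(i)}{1-\pi(i)}$, so you obtain $\sigma_*(P) \ge \max_i \dfrac{p(i,i)-\pi(i)}{1-\pi(i)}$. But $\dfrac{p(i,i)-\pi(i)}{1-\pi(i)} = p(i,i) - \dfrac{\pi(i)(1-p(i,i))}{1-\pi(i)} < p(i,i)$, so what this shows is that the \emph{stronger} hypothesis $\max_i (p(i,i)-\pi(i))/(1-\pi(i)) > \lambda_*$ suffices, not $\max_i p(i,i) > \lambda_*$ as stated. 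The additional steps you sketch (comparing $P^n(i,i)\ge p(i,i)^n$ with $|P^n(i,i)-\pi(i)|\le \|P^n-\pi\|_{op}$) are indeed where the argument would have to succeed, but you have not carried them out, and it is not clear they bridge the gap between the two thresholds. The paper instead invokes the Sing--Thompson theorem, which is a statement relating diagonal entries, eigenvalues, and singular values of a matrix; your approach does not replicate or substitute for that ingredient. As written, item (3) is incomplete.

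Items (2) and (4) follow the paper's route. For (2), the paper works through $A=\Lambda\widehat{\Lambda}$ and the identity $\kappa(\Lambda)^2=\lambda_1(A)/\lambda_\r(A)$, whereas you phrase it in terms of unitary equivalence of orthonormal eigenbases; these are essentially the same calculation in different clothing. For (4), your $\ell^1$--$\ell^2$ reduction via Cauchy--Schwarz, the identification of the density with $\widehat{P}^n(\delta_x/\pi(x))$, the evaluation $\|\delta_x/\pi(x)-\1\|^2_{\ell^2(\pi)}=(1-\pi(x))/\pi(x)$, and the appeal to item (1) is exactly what the paper does (the paper phrases it with $\mathrm{Var}_\pi$ but the computation is identical), combined at the end with Fill's bound \eqref{eq:tvfill}.
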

\begin{rk}
 It is interesting to recall that when $P$ is reversible and compact then the sequence of eigenfunctions is orthonormal and  thus an application of the Parseval identity yields $\norm{P^n - \pi}_{\ell^2(\pi) \to \ell^2(\pi)} =  \lambda_*^n$ and $\kappa(\Lambda) = 1$ which  is a specific instance of  item \eqref{it:varbd} and \eqref{it:v}.
\end{rk}
\begin{rk}
  We also recall the discrete analogue of the notion of hypocoercivity introduced in \cite{V09}, i.e.~there exists a constant $C < \infty$ and $\rho \in (0,1)$ such that, for all $ n \in \mathbb{N}$,
 $$\norm{P^n - \pi}_{\ell^2(\pi) \to \ell^2(\pi)} \leq C \rho^n.$$
Note that, in general, these constants are not known explicitly.  We observe that the upper bound in \eqref{eq:hyp} reveals that the ergodic chains in $\S$ satisfy this hypocoercivity phenomena. More interestingly, our approach based on the similarity concept enables us to get on the one hand an explicit and on the other hand a spectral interpretation of this rate of convergence. Indeed, it can be understood as a modified spectral gap  where the perturbation  from the classical spectral gap is given by the condition number $\kappa(\Lambda)$ which may be interpreted as a measure of deviation from symmetry. In this vein, we  mention the recent work \cite{Patie-Savov} where a similar spectral interpretation of the hypocoercivity phenomena is given for a class of non-self-adjoint Markov semigroups.
\end{rk}


\begin{proof} 
	 We first show the upper bound in \eqref{it:varbd}. Define the synthesis operator $T^* : \ell^2 \to \ell^2(\pi)$ by $\alpha = (\alpha_i) \mapsto T^*(\alpha) = \sum_{i = 0}^{\r} \alpha_i f_i$, where $(f_i)$ are the eigenfunctions of $P$ and $(f_i^*)$ are the unique biorthogonal basis of $(f_i)$ as in Theorem \ref{thm:mcins}. For $1 \leq i \leq \r$, we take $\alpha_i = \lambda_i^n \langle g,f_i^* \rangle_{\pi}$, and denote $(q_i)$ to be the orthonormal eigenfunctions of $Q$, where $f_i = \Lambda q_i$. Note that $||T^*||_{op} \leq ||\Lambda||_{op} < \infty$, since
	$$||T^*(\alpha)|| = \norm{\sum_{i = 0}^{\r} \alpha_i \Lambda q_i} \leq \norm{\Lambda}_{op} \norm{\sum_{i = 0}^{\r} \alpha_i q_i}_{\pi_Q} \leq ||\Lambda||_{op} ||\alpha||_{\ell^2}.$$
	For $g \in \ell^2(\pi)$, we also have
	\begin{align*}
	\sum_{i=0}^{\r} |\langle g,f_i^* \rangle_{\pi}|^2 &= \sum_{i=0}^{\r} |\langle g, (\Lambda^{*})^{-1}q_i \rangle_{\pi}|^2
	= \sum_{i=0}^{\r} |\langle \Lambda^{-1}g, q_i \rangle_{\pi_Q}|^2
	= ||\Lambda^{-1}g||^2_{\pi_Q}
	\leq ||\Lambda^{-1}||^2_{op} ||g||^2_{\pi},
	\end{align*}
	where the third equality follows from Parseval's identity, which leads to
	\begin{align}\label{eq:l2}
	|| P^ng - \pi g||_{\pi}^2 = ||T^*(\alpha)||^2 \leq ||\Lambda||_{op}^2 ||\alpha||_{l^2}^2 \leq ||\Lambda||_{op}^2 ||\Lambda^{-1}||_{op}^2 \lambda_*^{2n}  ||g||^2_{\pi}.
	\end{align}
	Desired upper bound follows from \eqref{eq:l2} and  \begin{equation*}
	\norm{P^n - \pi}_{\ell^2(\pi) \to \ell^2(\pi)} \leq \lambda_*(\widehat{P}P)^{n/2} = \lambda_*(P\widehat{P})^{n/2},
	\end{equation*}
	see e.g.~\cite{LSC97}. The lower bound in \eqref{it:varbd} follows readily from the well-known result that the $n^{th}$ power of the spectral radius $\lambda_*^n$ is less than or equal to the norm of $P^n$ on the reduced space $\ell^2_0(\pi)$. Next, we show that $P$ is non-self-adjoint if and only if $\kappa(\Lambda) > 1$.
We recall from \eqref{eq:interadj} that $PA=A\widehat{P}$ where
$A=\Lambda \widehat{\Lambda}$ is a positive self-adjoint matrix. Thus, by the spectral theorem there exists $T \in \mathrm{GL}_\r$, the general linear group of dimension $\r$, such that $A=T D_A \widehat{T}$ with $D_A$ the diagonal matrix of its eigenvalues where since $\Lambda$ is defined up to a non-zero multiplicative constant we can assume, without loss of generality, that its largest eigenvalue $\lambda_1(A)$=1. Next, we recall from \cite[p.~382]{HJ13} that
\begin{equation}\label{eq:defcond}
  \kappa(\Lambda)=\frac{\sigma_{1}(\Lambda)}{\sigma_{\r}(\Lambda)}=\frac{\sqrt{\lambda_1(A)}}{\sqrt{\lambda_\r(A)}},
\end{equation} where $\sigma_{1}(\Lambda)$ (resp.~$\sigma_{\r}(\Lambda)$) is the largest (resp.~smallest) singular value of $\Lambda$ and $\lambda_\r(A)$ is the smallest eigenvalue which is positive as $\Lambda \in \mathrm{GL}_\r$ and hence  $A \in \mathrm{GL}_\r$.  Thus, $\kappa(\Lambda)=1$ implies that  $D_A=I_{\r}$ where $I_\r$ is the identity matrix, that is $\Lambda \widehat{\Lambda} =A=I_\r$ and, from \eqref{eq:interadj}, we deduce that $P$ is self-adjoint. Conversely, if $P$ is self-adjoint then by means of the same argument used for the proof of Theorem \ref{thm:mcins}\eqref{it:normal}, we have that  $\Lambda$ is unitary and hence $\Lambda \widehat{\Lambda} =I_\r$, that is $\kappa(\Lambda)=1$, which completes the proof of this statement. The claim in \eqref{it:st}  is a straightforward consequence of the Sing-Thompson theorem \cite{Thompson77}.
Next, using \eqref{eq:l2}, we get
\begin{align}\label{eq:vareig}
	\mathrm{Var}_{\pi} \left({\widehat{P}}^n g \right) &\leq \kappa(\widehat{\Lambda})^2 \lambda_*^{2n} \mathrm{Var}_{\pi}(g) = \kappa(\Lambda)^2\lambda_*^{2n} \mathrm{Var}_{\pi}(g), \quad n \in \mathbb{N}_0,
\end{align}
where we used the obvious identity $\kappa(\Lambda) = \kappa(\widehat{\Lambda})$ in the equality. This leads to
\begin{align*}
	|| \delta_x P^n -  \pi ||_{TV}^2 &= \dfrac{1}{4} \E_{\pi}^2 \left| \dfrac{\delta_x P^n}{\pi} - 1 \right|
	\leq \dfrac{1}{4} \mathrm{Var}_{\pi} \left( \dfrac{\delta_x P^n}{\pi} \right)
	= \dfrac{1}{4} \mathrm{Var}_{\pi} \left({\widehat{P}}^n \frac{\delta_x}{\pi} \right)
	\leq \dfrac{1}{4} \kappa(\Lambda)^2 \lambda_{*}^{2n} \mathrm{Var}_{\pi} \left( \frac{\delta_x}{\pi} \right) \\
	&= \dfrac{1}{4} \kappa(\Lambda)^2 \lambda_{*}^{2n} \dfrac{1 - \pi(x)}{\pi(x)} \leq \dfrac{1}{4} \kappa(\Lambda)^2 \lambda_{*}^{2n} \dfrac{1 - \pi_{min}}{\pi_{min}},
\end{align*}
where the first inequality follows from Cauchy-Schwartz
inequality. The proof is completed  by combining the above bound with \eqref{eq:tvfill}.
\end{proof}

\subsection{A numerical example}\label{subsec:numericalex}
To illustrate the previous result, we consider the following example in the $\Mc$ class where the dual transition matrix is given by
\begin{equation}\label{eq:example}
	\widehat{P} = \begin{pmatrix}
	0.275 & 0.7 & 0.005 & 0.02 \\
	0.17 & 0.8 & 0.01 & 0.02 \\
	0 & 0.94 & 0.02 & 0.04 \\
	0 & 0 & 0.95 & 0.05 \\
	\end{pmatrix}.
\end{equation}
Table \ref{tab:example} shows the rate of convergence of  $P$ towards $\pi=(0.18, 0.77,0.03,0.02)$ with $\widehat{P}$ given in \eqref{eq:example}. We observe that for $n = 1,2$, the reversibilization bound $\lambda_{*}(P\widehat{P})^{n/2}$ is smaller while for $n \geq 3$, our upper bound in Theorem \ref{thm:spectralexp} $\kappa_{\Lambda} \lambda_*^n$ is smaller. Also, we point out that since $\max_i p(i,i) = 0.8 > \lambda_* = 0.17$, the Sing-Thompson condition in item \eqref{it:st} of Theorem \ref{thm:spectralexp} holds.

\begin{table}[htbp]
	\centering
	\caption{$\ell^2(\pi)$-rate of convergence of  $P$ }
	\begin{tabular}{rrrr}
		\toprule
		$n$     & $||P^n - \pi||_{\ell^2(\pi) \to \ell^2(\pi)}$ & $\kappa(\Lambda) \lambda_*^n$ & $\lambda_{*}(P\widehat{P})^{n/2}$ \\
		\midrule
		1     & 0.81  & 8.64 & 0.81 \\
		2     & 0.08  & 1.42  & 0.65 \\
		3     & 0.02  & 0.24  & 0.52 \\
		4     & 0.003 & 0.04  & 0.42 \\
		5  	  & 0.0005 & 0.006  & 0.34 \\
		\bottomrule
	\end{tabular}%
		\label{tab:example}%
\end{table}%

\section{Separation cutoff} \label{sec:sc}

As another illustration of the similarity concept, we aim at generalizing, to the non-reversible chains in the class $\mathcal{S}^M$, the separation cutoff criteria established by Diaconis and Saloff-Coste  in \cite{DSC06} and Chen and Saloff-Coste in \cite{CSC15} for reversible birth-death chains. We also offer an alternative necessary and sufficient condition as obtained by Mao et al.~\cite{MZZ16} recently for continuous-time upward skip-free chain with stochastic monotone time-reversal. The term ``cutoff phenomenon" was first formally introduced by Aldous and Diaconis in \cite{AldousDiaconis86}, and total variation cutoff for birth-death chains have been studied by Ding et al.~\cite{DLP10}. To this end, we recall the definition of separation distance of Markov chains, which is used as a standard measure for convergence to equilibrium.
For $n \in \N$, the maximum separation distance $s(n)$ is defined by
$$s(n) = \max_{x,y \in E} \left[ 1 - \dfrac{P^n(x,y)}{\pi(y)}\right] = \max_{x \in E}\, \mathrm{sep}(P^n(x,\cdot),\pi) = \max_{x \in E} s_x(n).$$

Note that separation distance is not a metric. One of its nice feature is its connection to  strong stationary times that we now describe. We say that a strong stationary time $T$ for a Markov chain $X$ with stationary distribution $\pi$ is a randomized stopping time $T$, possibly depending on the initial starting position $x$, if, for all $x,y \in E$,
$$\mP_x(T = n, X_T = y) = \mP_x(T = n) \pi(y).$$
The fastest strong stationary time is a strong stationary time such that for all $n \in \N$, $s_x(n) = \mP(T > n)$.
We now provide a description of the cutoff phenomenon for Markov chains. Recall that the separation mixing times are defined, for any $x \in E$ and $\epsilon>0$, as \[T^s(x,\epsilon) = \min \{ n \geq 0;~\mathrm{sep}(P^n(x,\cdot),\pi) \leq \epsilon\}\]
and	\[T^s(\epsilon) = \min \{ n \geq 0;~s(n) \leq \epsilon\}.\]
A family, indexed by $n \in \N$, of ergodic chains $X^{(n)}$ defined on $E_{\r_n} = \{0,\ldots,\r_n\}$ with transition matrix $P_n$, stationary distribution $\pi_n$ and separation mixing times ${\rm{T}}_n(\epsilon)=T^s_n(\epsilon)$ or $T^s_n(x,\epsilon)$, for some $x\in E$, is said to present a separation cutoff if there is a positive sequence  $(t_n)$ such that for all $\epsilon \in (0,1)$,
$$\lim_{n \to \infty} \dfrac{\rm{T}_n(\epsilon)}{t_n} = 1.$$
The family has a $(t_n, b_n)$ separation cutoff if the sequences $(t_n)$ and $(b_n)$ are positive, $b_n/t_n \to 0$ and
for all $\epsilon \in (0,1)$,
$$\limsup_{n \to \infty} \dfrac{|{\rm{T}}_n(\epsilon) - t_n|}{b_n} < \infty.$$
Let us now write $\mathrm{R}_n=(I - P_n)^{-1}_{\ell^2_0}$  for the centered resolvent, that is, the resolvent operator restricted to $l^2_0(\pi)$. The main result of this section is the following.

\begin{theorem}\label{thm:sepcutoffsf}
	Suppose that, for each  $n \geq 1$,   $X^{(n)} \in \S_{\r_n}^{M}$  and let $(\theta_{n,i})_{i=1}^{\r_n}$ be the non-zero eigenvalues of $I - P_n$. Define
	\[ 
\underline{\theta}_n = \min_{1 \leq i \leq \r_n} \theta_{n,i}, \quad \rho_n^2 = \sum_{i=1}^{\r_n} \dfrac{1-\theta_{n,i}}{\theta_{n,i}^2}.\]
	Then the family of chains $(X^{(n)})$ with transition kernel $(P_n)$, all started from $0$, has a separation cutoff if and only if $\mathrm{Tr}(\mathrm{R}_n) \underline{\theta}_n \to \infty$ if and only if $T_n^s(0,\epsilon) \underline{\theta}_n \to \infty$. In this case there is a $(\mathrm{Tr}(\mathrm{R}_n), \max\{\rho_n,1\})$ separation cutoff.
\end{theorem}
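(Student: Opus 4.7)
My strategy is to reduce the separation cutoff question for the non-reversible chain $X^{(n)} \in \mathcal{S}^M$ to the classical cutoff criterion for sums of independent geometric random variables, by exhibiting a sharp strong stationary time whose law is explicitly governed by the spectrum of $I-P_n$.

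First I would construct a sharp strong stationary time $T_n$ for $X^{(n)}$ started at $0$. Because $X^{(n)} \in \mathcal{S}^M$, the time-reversed chain $\widehat{P}_n$ is stochastically monotone, which is precisely the hypothesis that allows one to invoke the Diaconis--Fill strong stationary duality together with Siegmund duality. As already observed in the proof of Theorem \ref{thm:mcins}\eqref{it:Mc}, the Siegmund dual $\widetilde{P}_n$ of $\widehat{P}_n$, when restricted to $[[0,\r_n-1]]$, is an irreducible substochastic birth-death chain $\widetilde{P}_n^{bd}$, and a Doob $\widetilde{h}$-transform turns it into an ergodic birth-death chain $Q_n$. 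The construction produces a strong stationary time $T_n$ satisfying, for all $n \in \mathbb{N}$, $s_0(n) = \mathbb{P}(T_n > n)$, and distributionally $T_n$ equals the hitting time of the top state $\r_n$ by the birth-death chain $Q_n$ started from $0$.

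Second, I would identify the law of $T_n$ as a convolution of independent geometric variables. By the classical Keilson--Karlin--McGregor result (recovered in our setting through Fill's construction and Proposition \ref{prop:sfmc}), the upward hitting time from $0$ to the top of an irreducible birth-death chain is distributed as $\sum_{i=1}^{\r_n} G_{n,i}$ with $G_{n,i} \sim \mathrm{Geom}(\theta_{n,i})$ independent, where $\{\theta_{n,i}\}$ are the non-zero eigenvalues of $I - Q_n$. The key spectral point is that by Theorem \ref{thm:mcins}\eqref{it:eig} the similarity $P_n \stackrel{\Lambda_n}{\sim} Q_n$ preserves the spectrum, so these $\theta_{n,i}$ are exactly the non-zero eigenvalues of $I - P_n$. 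Consequently
\begin{equation*}
\mathbb{E}[T_n] = \sum_{i=1}^{\r_n} \frac{1}{\theta_{n,i}} = \mathrm{Tr}(\mathrm{R}_n), \qquad \mathrm{Var}(T_n) = \sum_{i=1}^{\r_n} \frac{1-\theta_{n,i}}{\theta_{n,i}^2} = \rho_n^2.
\end{equation*}

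Third, I would invoke the general separation cutoff criterion of Diaconis--Saloff-Coste \cite{DSC06} and its refinement by Chen--Saloff-Coste \cite{CSC15}, which asserts that a family of sums of independent geometric variables with parameters $(\theta_{n,i})$ exhibits a separation cutoff if and only if $\mathbb{E}[T_n]\,\underline{\theta}_n \to \infty$, and in that case the cutoff occurs at $\mathbb{E}[T_n]=\mathrm{Tr}(\mathrm{R}_n)$ with window $\max(\sqrt{\mathrm{Var}(T_n)},1)=\max(\rho_n,1)$. Since $s_0(n) = \mathbb{P}(T_n > n)$ and since the uniform-in-$x$ separation from $0$ governs the maximum (upward skip-free chains started from $\l=0$ are extremal for separation, as $s(n)=s_0(n)$), this criterion transfers verbatim to $T_n^s(0,\epsilon)$, giving the stated $(\mathrm{Tr}(\mathrm{R}_n),\max\{\rho_n,1\})$ cutoff. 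The equivalence $T_n^s(0,\epsilon)\,\underline{\theta}_n \to \infty \iff \mathrm{Tr}(\mathrm{R}_n)\,\underline{\theta}_n \to \infty$ then follows from the standard two-sided bounds relating $T_n^s(0,\epsilon)$ to $\mathbb{E}[T_n]$ in terms of $\underline{\theta}_n$.

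The main obstacle I anticipate is ensuring that the strong stationary time produced by the Siegmund/Diaconis--Fill construction is genuinely sharp \emph{and} that its law coincides with the hitting time in the birth-death chain $Q_n$ with the correct spectral parameters. The stochastic monotonicity built into $\mathcal{S}^M$ together with the upward skip-free structure is exactly what makes the dual link an honest Markov kernel and forces the geometric parameters to be the eigenvalues of $I-P_n$; without the $\mathcal{S}^M$ assumption (e.g.~only $\mathcal{S}$), this identification would fail and the reduction to a convolution of geometrics would collapse.
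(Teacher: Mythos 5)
Your proposal follows essentially the same route as the paper: both arguments identify the fastest strong stationary time started at $0$ as a convolution of independent geometric variables with parameters $(\theta_{n,i})$ (the paper invokes Fill's Theorem $1.4$ directly, which encodes the Diaconis--Fill/Siegmund-dual construction you sketch), and then apply the Diaconis--Saloff-Coste/Chen--Saloff-Coste separation cutoff criterion for sums of geometrics, with $\mathbb{E}[T_n]=\mathrm{Tr}(\mathrm{R}_n)$ and $\mathrm{Var}(T_n)=\rho_n^2$.

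Two small remarks. First, the aside that ``$s(n)=s_0(n)$'' for upward skip-free chains is not actually needed here (the theorem is phrased entirely in terms of $T_n^s(0,\epsilon)$ and $s_0(n)=\P(T_n>n)$), and it is not obviously true in the generality of $\S^M$; it is safer to drop it. Second, where you write ``invoke the general separation cutoff criterion of Diaconis--Saloff-Coste,'' the paper fills in what that criterion actually reduces to in this setting: the elementary but crucial inequality $\rho_n^2\le \underline{\theta}_n^{-1}\,\mathrm{Tr}(\mathrm{R}_n)$ (valid because each $\theta_{n,i}\in[\,\underline{\theta}_n,1]$), which is then fed into Chebyshev to get the two-sided bounds on $T_n^s(0,\epsilon)$; for the converse direction the paper establishes a separate eigenvalue lower bound on the separation mixing time (their Lemmas~\ref{lem:lowerbdmix} and~\ref{lem:sepcutoffnec}). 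Making those two ingredients explicit would turn your sketch into a complete proof matching theirs.
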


We begin the proof of Theorem \ref{thm:sepcutoffsf} by giving an important lemma that gives a lower bound on the mixing time in terms of the eigenvalues of $I - P$. The corresponding result for reversible and ergodic Markov chain can be found in \cite[Theorem $12.4$]{LPW09}.

\begin{lemma}\label{lem:lowerbdmix}
	For an ergodic chain $X \in \S$ and $\epsilon \in (0,1)$, denote by $(\theta_i)_{i = 0}^{\r}$  the eigenvalues of $I - P$ arranged in ascending order and $\underline{\theta} = \min_{i \neq 0} \theta_i$. We have
	$$T^s(\epsilon) \geq \left(\underline{\theta}^{-1}-1 \right) \log \left( \dfrac{1}{2\epsilon}\right).$$
\end{lemma}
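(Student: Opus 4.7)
The strategy is to lift the classical eigenfunction-based lower bound on mixing time (cf.~Levin--Peres--Wilmer, Theorem 12.4) from the reversible to the non-reversible setting of $\S$, and from total variation to the separation distance. The two basic ingredients are, first, the existence of a real eigenfunction corresponding to the spectral gap, which is supplied by the similarity with a birth--death chain via Theorem \ref{thm:mcins}, and, second, the elementary domination $\|P^n(x,\cdot)-\pi\|_{TV} \leq s_x(n) \leq s(n)$ valid for every $x\in E$, which allows us to transfer a total variation lower bound to a separation lower bound.

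If $\underline{\theta}\geq 1$, the right-hand side is non-positive and the inequality is trivial, so I assume $\underline{\theta}\in(0,1)$ and set $\beta=1-\underline{\theta}\in(0,1)$, which is the largest eigenvalue of $P$ strictly smaller than $1$. Theorem \ref{thm:mcins}\eqref{it:eig} furnishes a real eigenfunction $f$ of $P$ with $Pf=\beta f$, which I normalize by $\|f\|_\infty=1$. The key algebraic fact, which does \emph{not} require reversibility, is that $\langle f,\1\rangle_\pi=0$: indeed, $\pi P=\pi$ gives $\langle f,\1\rangle_\pi=\langle Pf,\1\rangle_\pi=\beta\langle f,\1\rangle_\pi$, and $\beta\neq 1$ forces this quantity to vanish.

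Combining $P^n f=\beta^n f$ with $\langle f,\1\rangle_\pi=0$ yields, for every $x\in E$,
\[
\beta^n|f(x)| \;=\; \Big|\sum_{y\in E}(P^n(x,y)-\pi(y))f(y)\Big| \;\leq\; 2\|f\|_\infty\|P^n(x,\cdot)-\pi\|_{TV} \;\leq\; 2s(n).
\]
Choosing $x^\star$ with $|f(x^\star)|=1$ and plugging $n=T^s(\epsilon)$ gives $\beta^n\leq 2\epsilon$, that is $n\geq \log(1/(2\epsilon))/(-\log\beta)$. The proof concludes by applying the elementary inequality $-\log(1-x)\leq x/(1-x)$ for $x\in[0,1)$ (the difference vanishes at $x=0$ and has non-negative derivative $x/(1-x)^2$), which, evaluated at $x=\underline{\theta}$, rearranges to $1/(-\log\beta)\geq (1-\underline{\theta})/\underline{\theta}=\underline{\theta}^{-1}-1$ and delivers the claim.

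The main obstacle is conceptual rather than computational: in the usual reversible argument the orthogonality $\langle f,\1\rangle_\pi=0$ is often derived from self-adjointness through orthogonality of eigenfunctions of distinct eigenvalues, but in our non-reversible context one has to recognize that this orthogonality is a direct consequence of $\pi P=\pi$ alone. Once this is observed, the only substantive input beyond elementary inequalities is the existence of a real bounded eigenfunction associated with the spectral gap, which is exactly what the similarity with a birth--death chain provided by Theorem \ref{thm:mcins} guarantees.
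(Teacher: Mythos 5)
Your proof is correct and follows essentially the same route as the paper: pick a real eigenfunction of $P$ attached to a subdominant eigenvalue, observe that it is $\pi$-orthogonal to $\1$, deduce the geometric decay bound $\beta^n \leq 2 s(n)$ via the total variation$\,\leq\,$separation comparison, and then convert to the stated form with the elementary inequality $-\log(1-x)\leq x/(1-x)$. Two minor differences: you explicitly dispose of the trivial case $\underline{\theta}\geq 1$ and work directly with the eigenvalue $\beta = 1-\underline{\theta}=\lambda_1$, which is precisely the quantity appearing in the stated bound; the paper instead specializes to $|\lambda|=\max\{|\lambda_1|,|\lambda_\r|\}$, the second largest eigenvalue in modulus, obtaining the (a priori stronger) bound $T^s(\epsilon)\geq \lambda_*(1-\lambda_*)^{-1}\log(1/(2\epsilon))$ from which the lemma follows since $x\mapsto x/(1-x)$ is increasing.
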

\begin{proof}
	Take $f$ to be an eigenfunction of $P$ associated to an arbitrary  eigenvalue $\lambda \neq 1$, where $\lambda \in (\lambda_i)_{i=0}^{\r}$ which is the set of (real) eigenvalues of $P$ arranged in descending order. Note that $f$ is orthogonal to $\1$, since
	$$\langle Pf,\1 \rangle_{\pi} = \lambda \langle f,\1 \rangle_{\pi} = \langle f,\widehat{P}\1 \rangle_{\pi} = \langle f,\1 \rangle_{\pi},$$
	where the last equality follows from stochasticity of $\widehat{P}$. By writing
	$||f||_{\infty} = \max_{x \in E} |f(x)| = f(x^*)$, we have $$|\lambda^n f(x)| = |P^n f(x)| = \left| \sum_{y \in E} P^n(x,y)f(y) - \pi(y)f(y) \right| \leq 2 ||f||_{\infty}  ||P^n(x,\cdot) - \pi||_{\mathrm{TV}} \leq 2 ||f||_{\infty} s(n),$$
	where the first inequality follows from the definition of total variation distance $||P^n(x,\cdot) - \pi||_{\mathrm{TV}}$, and the second inequality comes from the result that total variation distance is less than or equal to separation distance, see e.g.~\cite[Lemma $6.13$]{LPW09}. Taking $n = T^s(\epsilon)$ and $x = x^*$, the above yields
	$|\lambda|^{T^s(\epsilon)} \leq 2 \epsilon$, which leads to
	$$ \frac{1-|\lambda|}{|\lambda|} T^s(\epsilon)\geq - \log(|\lambda|) T^s(\epsilon)  \geq - \log( 2\epsilon ).$$
	The proof is completed by specializing to $|\lambda| = \max\{|\lambda_1|,|\lambda_{\r}|\}$.
\end{proof}

The next lemma gives a necessary condition for separation cutoff in terms of the spectral information.

\begin{lemma}[Necessary condition for separation cutoff]\label{lem:sepcutoffnec}
	Suppose that, for each  $n \geq 1$,  $X^{(n)} \in \Sf_{\r_n}$  and let $(\theta_{n,i})_{i=1}^{\r_n}$ be the eigenvalues of $I - P_n$. For the $n^{th}$ chain in the family, define $\underline{\theta}_n = \min_{1 \leq i \leq \r_n} \theta_{n,i}$ and $T_n^s(\epsilon)$ to be the separation mixing time. If the family of chain with transition kernel $(P_n)$ exhibits a separation cutoff, then $T_n^s(\epsilon) \underline{\theta}_n \to \infty$.
\end{lemma}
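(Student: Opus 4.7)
The plan is to argue by contradiction, leveraging the spectral lower bound of Lemma \ref{lem:lowerbdmix}. Rewriting that bound, we have for every $n$ and every $\epsilon \in (0,1)$ the inequality
\[ T_n^s(\epsilon) \, \underline{\theta}_n \, \geq \, (1 - \underline{\theta}_n) \, \log\!\left(\frac{1}{2\epsilon}\right). \]
I would start by assuming the desired conclusion fails, namely $T_n^s(\epsilon) \underline{\theta}_n \not\to \infty$ for some $\epsilon_0$; after passing to a subsequence $(n_k)$ this gives an $\epsilon_0 \in (0,1)$ and a finite constant $C$ with $T_{n_k}^s(\epsilon_0) \, \underline{\theta}_{n_k} \leq C$ for every $k$.

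Next, I would exploit the separation cutoff assumption to transfer this bound from the single level $\epsilon_0$ to an arbitrary $\epsilon \in (0,1)$: the cutoff property yields $T_n^s(\epsilon)/T_n^s(\epsilon_0) \to 1$ for each fixed $\epsilon$, so for $k$ large enough (depending on $\epsilon$) I obtain $T_{n_k}^s(\epsilon) \, \underline{\theta}_{n_k} \leq 2 \, T_{n_k}^s(\epsilon_0) \, \underline{\theta}_{n_k} \leq 2C$. The main obstacle at this stage is that the lower bound displayed above is only informative when $\underline{\theta}_n$ stays bounded away from $1$, which leads me to split into two cases along the subsequence $(n_k)$.

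In the first case, I would extract a sub-subsequence on which $\underline{\theta}_{n_k} \leq 1 - \delta$ for some $\delta > 0$; then the displayed inequality combined with the preceding bound gives $2C \geq T_{n_k}^s(\epsilon) \, \underline{\theta}_{n_k} \geq \delta \log(1/(2\epsilon))$, and picking $\epsilon$ so small that $\delta \log(1/(2\epsilon)) > 2C$ produces the contradiction. In the complementary case $\underline{\theta}_{n_k} \to 1$ along a sub-subsequence, the bound $T_{n_k}^s(\epsilon_0) \underline{\theta}_{n_k} \leq C$ forces $T_{n_k}^s(\epsilon_0)$ to remain bounded, which contradicts the fact that a non-trivial separation cutoff requires the timing sequence $t_n$, and hence $T_n^s(\epsilon_0) = t_n(1+o(1))$, to diverge (otherwise the monotonicity and integer-valuedness of $T_n^s(\cdot)$ would collapse the mixing profile to a degenerate exact-mixing regime). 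Either case contradicts the initial assumption, and the conclusion $T_n^s(\epsilon) \underline{\theta}_n \to \infty$ follows.
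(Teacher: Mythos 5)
Your proof follows the same strategy as the paper's: invoke Lemma~\ref{lem:lowerbdmix}, divide by a fixed-level mixing time, and use the cutoff hypothesis to force the ratio $T_n^s(\epsilon)/T_n^s(\epsilon_0)$ toward $1$, which the spectral lower bound eventually contradicts as $\epsilon\to 0$. Where you do genuinely more work is in splitting on whether $\underline{\theta}_n$ stays away from $1$. That split is warranted: rewriting Lemma~\ref{lem:lowerbdmix} one only obtains
\[ \frac{T_n^s(\epsilon)}{T_n^s(\epsilon_0)}\ \geq\ \frac{\underline{\theta}_n^{-1}-1}{T_n^s(\epsilon_0)}\,\log\!\left(\frac{1}{2\epsilon}\right)\ \geq\ \frac{1-\underline{\theta}_n}{c}\,\log\!\left(\frac{1}{2\epsilon}\right), \]
and the displayed chain of inequalities in the paper silently drops the factor $1-\underline{\theta}_n$ (almost certainly a typo), so the paper's argument in effect only lives in your first case. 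You are right to flag the complementary case $\underline{\theta}_n\to 1$, but your resolution of it is itself not airtight: you appeal to the claim that a separation cutoff requires $t_n\to\infty$, yet the paper's definition of cutoff does not stipulate this, and degenerate families (chains that equilibrate exactly in one step, with $\underline{\theta}_n\equiv 1$, $T_n^s(\epsilon)\equiv 1$, $t_n\equiv 1$) do satisfy the displayed cutoff condition while $T_n^s(\epsilon)\underline{\theta}_n$ stays bounded. So the lemma implicitly presumes a non-degeneracy hypothesis (in practice $\r_n\to\infty$, hence $t_n\to\infty$), and you have in effect rediscovered this; stating it as a standing assumption, rather than invoking it as a ``fact,'' is what is needed to close the second case. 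In short: same core idea as the paper, with a more careful treatment of the $\underline{\theta}_n\to 1$ regime, but that extra care still leans on a hypothesis that should be made explicit.
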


\begin{proof}
Let $T_n^s = T_n^s(0.25)$, that is, choosing  $\epsilon = 0.25$. If $T_n^s(\epsilon) \underline{\theta}_n$ is bounded above in $n$ by $c>0$, then by Lemma \ref{lem:lowerbdmix} we have
\[ \dfrac{T_n^s(\epsilon)}{T_n^s} \geq \dfrac{\underline{\theta}_n^{-1}-1 }{T_n^s}\log \left( \dfrac{1}{2\epsilon}\right) \geq c \log \left( \dfrac{1}{2\epsilon}\right),\]
so $\dfrac{T_n^s(\epsilon)}{T_n^s} \to \infty$ as $\epsilon \to 0$, which implies that there is no separation cutoff.
\end{proof}

We are now ready to complete the proof of Theorem \ref{thm:sepcutoffsf}. Denote by $P_n^k$ the distribution of the $n^{th}$ chain at time $k$, and by $\pi_n$  the stationary measure of the $n^{th}$ chain. We also write $t_n = \mathrm{Tr}(\mathrm{R}_n)$. It is known, see e.g.~\cite[Theorem $1.4$]{Fill}, that
	$$\mathrm{sep}(P_n^k,\pi_n) = \mP(T_n > k),$$
	where $T_n$ is the fastest strong stationary time of the $n^{th}$ chain, which is equal in distribution to a $\r_n$-fold convolution of geometric random variables each with success probability $\theta_{n,i}$ for $i = 1,\ldots,\r_n$, mean $t_n$ and variance $\rho_n^2$. The key to establish the proof is the following.
	\begin{align}\label{eq:eigcompare}
	\rho_n^2 = \underline{\theta}_n^{-2} \sum_{i=1}^{\r_n} \dfrac{\left(1-\theta_{n,i}\right) \underline{\theta}_n^2}{\theta_{n,i}^2} \leq \underline{\theta}_n^{-2}\sum_{i=1}^{\r_n} \dfrac{\underline{\theta}_n}{\theta_{n,i}} = \underline{\theta}_n^{-1} t_n,
	\end{align}
	where we use the facts that $\theta_{n,i} \geq 0$ and $\underline{\theta}_n/\theta_{n,i} \leq 1$ in the inequality.
		Assume that $t_n \underline{\theta}_n \to \infty$, which together with \eqref{eq:eigcompare} yields $\rho_n/t_n \to 0$. The rest of the argument are similar to the ones developed in  the proof of \cite[Theorem $5.1$]{DSC06}. For sake of completeness we now provide its main ingredients. First, by means of Chebyshev's inequality, we have
	\begin{equation}\label{eq:cheby}
		t_n - (\epsilon^{-1} - 1)^{1/2} \rho_n \leq T^s_n(0,\epsilon) \leq t_n + (\epsilon^{-1} - 1)^{1/2} \rho_n.
	\end{equation}
	This shows that the family of chain exhibits a separation cutoff if we divide \eqref{eq:cheby} by $t_n$ and take $n \to \infty$. On the other hand, separation cutoff implies $T_n^s(0,\epsilon) \underline{\theta}_n \to \infty$ by Lemma \ref{lem:sepcutoffnec}, so it remains to show $T_n^s(0,\epsilon) \underline{\theta}_n \to \infty$ implies $t_n \underline{\theta}_n \to \infty$. Using $t_n \underline{\theta}_n \geq 1$, \eqref{eq:eigcompare} yields $\rho_n \leq t_n$, and together with the upper bound of \eqref{eq:cheby} leads to
	$$T^s_n(0,\epsilon) \leq t_n + (\epsilon^{-1} - 1)^{1/2} \rho_n \leq t_n (1 + (\epsilon^{-1} - 1)^{1/2}),$$
	so $t_n \underline{\theta}_n \to \infty$ holds if and only if $T_n^s(0,\epsilon) \underline{\theta}_n \to \infty$. It follows from \cite[Remark $1.1$]{CSC15} that there is a $(t_n, \max\{\rho_n,1\})$ separation cutoff. Precisely, \eqref{eq:cheby} gives
	$$|T_n^s(0,\epsilon) - t_n| \leq (\epsilon^{-1} - 1)^{1/2} \rho_n + 1,$$
	and a $(t_n, \max\{\rho_n,1\})$ cutoff is observed by noting that $\theta_{n,i} \leq 2$, $t_n \geq n/2$ and $\rho_n/t_n \to 0 $.


\subsection{$\ell^p$-cutoff}

We proceed by investigating the $\ell^p$-cutoff for fixed $p \in (1,\infty)$ for the class $\S$. Recall that 
 Chen and Saloff-Coste \cite[Theorem $4.2,4.3$]{CSC07} have shown that for a family of \textit{normal} ergodic transition kernel $P_n$, the max-$\ell^p$ cutoff is equivalent to the {\emph{spectral gap times mixing time}} going to infinity. We can extend their result to the case of the non-normal chains in $ \S $ as follows.
\begin{theorem}[Max-$\ell^p$ cutoff]
	Suppose that, for each  $n \geq 1$,   $X^{(n)} \in \mathcal{S}_{\r_n}$ with compact transition kernel $P_n \stackrel{\Lambda_n}{\sim} Q_n$ and stationary measure $\pi_n$, and let $\lambda_{n,*}$ be the second largest eigenvalue in modulus of $P_n$. Assume that
	$$\sup_{n \geq 1} ||\Lambda_n||_{op} \: ||\Lambda^{-1}_n||_{op} < \infty.$$
	Fix $p \in (1,\infty)$ and $\epsilon > 0$. Consider the max-$\ell^p$ distance to stationarity $$f_n(t) = \sup_{x \in E_{\r_n}} \norm{\dfrac{p^t_n(x,\cdot)}{\pi_n} - 1}_{\ell^p(\pi)} $$ and define
	$$t_n = \inf\{t > 0;\: f_n(t) \leq \epsilon\}, \quad \theta_{n,*} = - \log \lambda_{n,*} \quad \textrm{and } \mathcal{F} = \{f_n;\: n=1,2,\ldots\}.$$
	Assume that each $n$, $f_n(t) \to 0$ as $t \to \infty$ and $t_n \to \infty$. Then the family $\mathcal{F}$ has a max-$\ell^p$ cutoff if and only if $t_n \theta_{n,*} \to \infty$. In this case there is a $(t_n, \max\{1,\theta_{n,*}^{-1}\})$ cutoff.
\end{theorem}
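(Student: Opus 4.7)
The plan is to extend the max-$\ell^p$ cutoff criterion of Chen and Saloff-Coste \cite[Theorem 4.2, 4.3]{CSC07}, established there for normal transition kernels, to the non-normal family $(P_n) \subset \mathcal{S}$ by exploiting the similarity $P_n \stackrel{\Lambda_n}{\sim} Q_n$ with the reversible birth-death chains $Q_n$ and the spectral machinery developed in Theorem \ref{thm:mcins}. The standing uniform bound $\sup_n \kappa(\Lambda_n) < \infty$ is exactly what ensures the non-normality is mild enough not to destroy the cutoff structure when transferring from $Q_n$ to $P_n$.

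The first step is to produce a two-sided envelope $c_p\,\lambda_{n,*}^t \leq f_n(t) \leq C_p\,\kappa(\Lambda_n)\,\lambda_{n,*}^t$, with constants depending only on $p$ and $\sup_n \kappa(\Lambda_n)$. For the upper bound, I would expand the signed density $p_n^t(x,\cdot)/\pi_n - 1$ in the Riesz basis of eigenfunctions of $P_n$ supplied by Theorem \ref{thm:mcins}\eqref{it:spec}, whose Riesz constants are controlled by $\|\Lambda_n\|_{op}^{\pm 1}$. The $\ell^2(\pi_n) \to \ell^2(\pi_n)$ contraction rate $\kappa(\Lambda_n)\,\lambda_{n,*}^t$ then follows exactly as in the proof of Theorem \ref{thm:spectralexp}. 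For $p \neq 2$ I would invoke Riesz-Thorin interpolation between this $\ell^2 \to \ell^2$ estimate and the trivial $\ell^\infty \to \ell^\infty$ contractivity of $P_n - \pi_n$, handling $p \in (1,2)$ by passing to the adjoint chain via the dual similarity $Q_n \stackrel{\widehat{\Lambda}_n}{\sim} \widehat{P}_n$ from Theorem \ref{thm:mcins}\eqref{it:sbd}. The matching lower bound is obtained by testing the signed density against the real eigenfunction realising $\lambda_{n,*}$ (whose reality is ensured by Theorem \ref{thm:mcins}\eqref{it:eig}) and invoking $\ell^p$–$\ell^{p'}$ duality, giving $f_n(t) \geq c_p \lambda_{n,*}^t$ for a constant $c_p>0$ independent of $n$.

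With the envelope $f_n(t) \asymp e^{-\theta_{n,*} t}$ in hand, the equivalence between the max-$\ell^p$ cutoff and $t_n \theta_{n,*} \to \infty$ follows by the standard Chen–Saloff-Coste argument: if $t_n \theta_{n,*} \to \infty$, the reparametrisation $t = t_n + s/\theta_{n,*}$ makes $f_n$ transition in a window of size $\theta_{n,*}^{-1} = o(t_n)$, yielding a $(t_n, \theta_{n,*}^{-1})$ cutoff; conversely, if $\liminf_n t_n \theta_{n,*} < \infty$, the lower envelope prevents $f_n$ from decaying on any scale smaller than a fixed multiple of $t_n$ itself, precluding cutoff. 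The reported window $\max\{1, \theta_{n,*}^{-1}\}$ absorbs the degenerate case where the spectral gaps $\theta_{n,*}$ remain bounded away from zero.

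The principal technical obstacle is the $p \neq 2$ portion of the envelope, since similarity and the associated Riesz basis are inherently $\ell^2$ constructions and non-normal operators admit no functional calculus on $\ell^p$. The Riesz-Thorin route circumvents this by only demanding the two endpoint bounds: the $\ell^2 \to \ell^2$ estimate coming from the similarity, and the trivial $\ell^\infty \to \ell^\infty$ estimate coming from sub-stochasticity of $P_n$. Uniformity in $n$ of all the interpolation and duality constants is guaranteed by $\sup_n \kappa(\Lambda_n) < \infty$, which is precisely the standing hypothesis of the theorem.
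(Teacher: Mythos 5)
Your overall architecture matches the paper's: reduce the problem to a two-sided estimate on the $\ell^p$-operator norm of $P_n^t - \pi_n$ using the $\ell^2$ similarity estimate $\lambda_{n,*}^t \leq \|P_n^t - \pi_n\|_{\ell^2 \to \ell^2} \leq \kappa(\Lambda_n)\lambda_{n,*}^t$ (Theorem \ref{thm:spectralexp}) plus Riesz–Thorin interpolation, and then plug into the Chen–Saloff-Coste cutoff machinery of \cite[Theorems 4.2–4.3]{CSC07}. The paper formalizes this precisely as Lemma \ref{lem:l2lp}, which is the key technical ingredient and which your proposal is implicitly re-deriving.

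That said, your stated envelope $c_p\,\lambda_{n,*}^t \leq f_n(t) \leq C_p\,\kappa(\Lambda_n)\,\lambda_{n,*}^t$ is slightly too strong. Interpolating the $\ell^2 \to \ell^2$ estimate against the trivial $\ell^1\to\ell^1$ or $\ell^\infty\to\ell^\infty$ bound does not preserve the exponent on $\lambda_{n,*}^t$: the correct upper bound is $\|P^n - \pi\|_{\ell^p \to \ell^p} \leq 2^{|1-2/p|}\bigl(\kappa(\Lambda)\,\lambda_*^n\bigr)^{1-|1-2/p|}$, and the paper's lower bound, taken from \cite[Lemma 4.1]{CSC07}, carries a different $p$-dependent exponent $\theta_p \in [1/2,1]$: $\|P^n - \pi\|_{\ell^p\to\ell^p} \geq 2^{-1+\theta_p}\lambda_*^{n\theta_p}$. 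So the decay rate in $\ell^p$ is not $\lambda_*^t$ but $\lambda_*^{\alpha_p t}$ with two different $p$-dependent exponents for the upper and lower bounds. The cutoff criterion and window size $\theta_{n,*}^{-1}$ come out the same because those exponents are absolute constants depending only on $p$, so the discrepancy is not fatal; but you should be careful not to assert the cleaner-looking envelope, which is false as stated. Also note that the paper bounds the $\ell^p\to\ell^p$ operator norm, not $f_n(t)$ itself (which is an $\ell^{p'}\to\ell^\infty$ norm); the conversion between the two is part of the CSC07 machinery the paper quotes rather than rebuilds. Your alternative lower-bound route via the real eigenfunction realizing $\lambda_{n,*}$ and $\ell^p$–$\ell^{p'}$ duality is a legitimate substitute for the paper's citation of \cite[Lemma 4.1]{CSC07} and gives a more elementary, self-contained derivation of the same type of lower bound.
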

The proof in \cite[Theorem $4.2,4.3$]{CSC07} works nicely as long as we have Lemma \ref{lem:l2lp} below, which gives a two-sided control on the $\ell^p(\pi)$ norm of $P^n - \pi$. 
The following lemma is then the key to the proof.
\begin{lemma}\label{lem:l2lp}
	Suppose that $X \in  \S $ on $E$ with transition kernel $P \stackrel{\Lambda}{\sim} Q$. Fix $p \in (1,\infty)$. Then, for  any $n \in \mathbb{N}$, we have
	\begin{align}
		2^{-1+\theta_p} \lambda_*(P)^{n\theta_p} &\leq \norm{P^n - \pi}_{\ell^p(\pi) \to \ell^p(\pi)} \leq 2^{|1-2/p|} (\kappa(\Lambda) \lambda_*(P)^n)^{1-|1-2/p|}, \label{eq:lp}
	\end{align}
	where $\theta_p \in [1/2,1]$ and $\kappa(\Lambda) = \norm{\Lambda}_{op} \: ||\Lambda^{-1}||_{op}$.
\end{lemma}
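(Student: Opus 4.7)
The plan is to combine the $\ell^2(\pi)$-operator bound from Theorem~\ref{thm:spectralexp} with Riesz--Thorin interpolation on the scale of weighted spaces $\ell^q(\pi)$ for $q\in[1,\infty]$. For the upper bound, I would first establish the trivial endpoint estimate $\norm{P^n-\pi}_{q\to q}\le 2$ for $q\in\{1,\infty\}$: the stochasticity of $P^n$ gives $\norm{P^n}_{q\to q}\le 1$, while the rank-one projection $\pi:f\mapsto \langle f,\1\rangle_\pi\,\1$ is a contraction on every $\ell^q(\pi)$ by H\"older's inequality together with $\norm{\1}_{\ell^r(\pi)}=1$. With Theorem~\ref{thm:spectralexp} furnishing the anchor $\norm{P^n-\pi}_{2\to 2}\le \kappa(\Lambda)\lambda_*(P)^n$, I would then apply Riesz--Thorin between $(\ell^1(\pi),\ell^2(\pi))$ when $p\in(1,2]$ and between $(\ell^2(\pi),\ell^\infty(\pi))$ when $p\in[2,\infty)$; in both regimes the weight on the $\ell^2$-endpoint is $1-|1-2/p|$, delivering the stated upper bound in a unified form.

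For the lower bound, the crucial observation is that $P$ and $\pi$ commute: both preserve $\1$ and $\pi$ maps into $\mathbb{R}\1$, so $(P-\pi)^n=P^n-\pi$ for every $n\ge 1$. The spectrum of $P-\pi$ on $\ell^2(\pi)$ is $\{0\}\cup\{\lambda_i:1\le i\le\r\}$, with spectral radius $\lambda_*(P)$, thanks to the Riesz-basis diagonalization of Theorem~\ref{thm:mcins}. The spectral radius of a bounded operator is intrinsic and independent of the choice of equivalent norm, and Gelfand's formula yields $\norm{T^n}^{1/n}\ge \rho(T)$ for every $n$. Applied to $T=P-\pi$ on $\ell^p(\pi)$ this gives
\[\norm{P^n-\pi}_{\ell^p(\pi)\to\ell^p(\pi)} \ge \rho(P-\pi)^n = \lambda_*(P)^n,\]
which is the announced lower bound at the endpoint $\theta_p=1$. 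The complementary endpoint $\theta_p=1/2$ follows from the reverse Riesz--Thorin midpoint identity $\norm{T}_{2\to 2}\le \norm{T}_{p\to p}^{1/2}\norm{T}_{p'\to p'}^{1/2}$ (since $\ell^2(\pi)$ is the Banach-space midpoint of $\ell^p(\pi)$ and its conjugate $\ell^{p'}(\pi)$) together with the trivial $\norm{T}_{p'\to p'}\le 2$, which yields $\norm{P^n-\pi}_{p\to p}\ge 2^{-1}\lambda_*(P)^{2n}$; interpolating between these two endpoint estimates produces the stated family of lower bounds indexed by $\theta_p\in[1/2,1]$.

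The substantive work is already encapsulated in Theorem~\ref{thm:spectralexp}: the similarity $P\stackrel{\Lambda}{\sim} Q$ is used to transfer the spectral theorem of the birth--death companion $Q$ to a Riesz-basis decomposition of $P$, with the condition number $\kappa(\Lambda)$ playing the role of a ``perturbed spectral gap'' in the absence of self-adjointness. Once that $\ell^2$-estimate is granted, the passage to general $\ell^p$ is a routine interpolation exercise; the only mild subtlety, rather than a genuine obstacle, is the verification that $\pi$ is contractive on the $\ell^1$ and $\ell^\infty$ endpoints, which reduces to H\"older's inequality and the normalization of $\1$.
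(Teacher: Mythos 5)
Your upper bound is sound and mirrors the paper's: both use Riesz--Thorin interpolation between the trivial endpoint estimate $\norm{P^n-\pi}_{\ell^1\to\ell^1},\norm{P^n-\pi}_{\ell^\infty\to\ell^\infty}\le 2$ and the $\ell^2(\pi)$-bound $\kappa(\Lambda)\lambda_*^n$ from Theorem~\ref{thm:spectralexp}; the paper simply cites equation~(3.4) of~\cite{CSC07} for the resulting inequality $\norm{T}_{\ell^p\to\ell^p}\le 2^{|1-2/p|}\norm{T}_{\ell^2\to\ell^2}^{1-|1-2/p|}$ rather than re-deriving it, but the content is the same.

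The lower bound argument, however, has a genuine gap. First, the Gelfand--spectral-radius estimate $\norm{P^n-\pi}_{\ell^p\to\ell^p}\ge\lambda_*(P)^n$ recovers only the case $\theta_p=1$ (constant $2^0=1$); for $\theta_p<1$ the claimed quantity $2^{-1+\theta_p}\lambda_*(P)^{n\theta_p}$ is \emph{strictly larger} than $\lambda_*(P)^n$ for large $n$ (since $0<\lambda_*<1$), so Gelfand does not imply the stated inequality. Second, the ``reverse Riesz--Thorin midpoint'' computation yields $\norm{P^n-\pi}_{\ell^p\to\ell^p}\ge 2^{-1}\norm{P^n-\pi}_{\ell^2\to\ell^2}^{2}\ge 2^{-1}\lambda_*^{2n}$, which you label as the endpoint $\theta_p=1/2$; but substituting $\theta_p=1/2$ into the lemma's formula gives $2^{-1/2}\lambda_*^{n/2}$, which is neither $2^{-1}\lambda_*^{2n}$ nor comparable to it (the exponent $2n$ you obtain is the $\theta_p=2$ regime, outside $[1/2,1]$, and the constant $2^{-1}$ does not match the parametric form $2^{-1+\theta_p}$). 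Third, ``interpolating between two endpoint lower bounds'' is not a valid operation: given two lower bounds $A$ and $B$ on $\norm{T}_{\ell^p\to\ell^p}$, one only gets $\max(A,B)$, not a one-parameter family with the stated form. What is actually needed is a reverse interpolation inequality in which $\ell^2$ sits \emph{between} $\ell^p$ and one of the trivial endpoints $\ell^1$ or $\ell^\infty$, giving $\norm{T}_{\ell^2\to\ell^2}\le 2^{1-\alpha}\norm{T}_{\ell^p\to\ell^p}^{\alpha}$ for an explicit $\alpha=\alpha(p)$; rearranging then produces a lower bound of the correct type, and this is precisely what the paper invokes via~\cite[Lemma~4.1]{CSC07}. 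To repair the argument you should either cite that lemma directly or reproduce its derivation, rather than appealing to Gelfand and an informal ``interpolation of lower bounds.''
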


\begin{proof}
	By the  Riesz-Thorin interpolation theorem, see e.g.~\cite[equation $3.4$]{CSC07}, we have
	$$\norm{P^n - \pi}_{\ell^p(\pi) \to \ell^p(\pi)} \leq 2^{|1-2/p|} \norm{P^n - \pi}_{\ell^2(\pi) \to \ell^2(\pi)}^{1-|1-2/p|},$$
	which when combined with Theorem \ref{thm:spectralexp} gives the upper bound of \eqref{eq:lp}. Next, to show the lower bound in \eqref{eq:lp}, we use another version of the Riesz-Thorin interpolation theorem,  see e.g.~\cite[Lemma $4.1$]{CSC07}, to  get
	$$\norm{P^n - \pi}_{\ell^p(\pi) \to \ell^p(\pi)} \geq2^{-1+\theta_p} \norm{P^n - \pi}_{\ell^2(\pi) \to \ell^2(\pi)}^{\theta_p} \geq 2^{-1+\theta_p} \lambda_*(P)^{n\theta_p},$$
	where we use Theorem \ref{thm:spectralexp} in the second inequality. This completes the proof.
\end{proof}

\bibliographystyle{plain}
\bibliography{skipfree2}

\end{document}